\tikzset{inner sep=0pt, node distance=5mm,
  root/.style={circle,draw,minimum size=5pt,thick},
  broot/.style={circle,draw,minimum size=5pt,thick,fill},
  xroot/.style={circle,draw,minimum size=5pt,thick,label=below:$\times$},
  doublearrow/.style={postaction={decorate},   decoration={markings,mark=at position .6 with {\arrow[line width=1.2pt]{>}}},double distance=1.6pt,thick},
  rdoublearrow/.style={postaction={decorate},   decoration={markings,mark=at position .4 with {\arrowreversed[line width=1.2pt]{>}}},double distance=1.6pt,thick},
  rtriplearrow/.style={postaction={decorate},   decoration={markings,mark=at position .4 with {\arrowreversed[line width=1.2pt]{>}}},double distance=2.5pt,thick},
	curvedline/.style={bend=right}
} 
\theoremstyle{plain}
\newtheorem{theo}{Theorem}
\theoremstyle{definition}
\newtheorem{example}[theo]{Example}
\newtheorem{definition}[theo]{Definition}
\theoremstyle{plain}
\newtheorem{lemma}[theo]{Lemma}
\newtheorem{theorem}[theo]{Theorem}
\newtheorem{corollary}[theo]{Corollary}
\newtheorem{proposition}[theo]{Proposition}
\newtheorem{conjecture}[theo]{Conjecture}
\theoremstyle{definition}
\newtheorem{remark}[theo]{Remark}
\newenvironment{colored}{\color{red}}{}
\newcommand{\bc}{\begin{colored}}
\newcommand{\ec}{\end{colored}}
\renewcommand{\inf}{\mathfrak{inf}}
\newcommand{\beq}{\begin{equation}}
\newcommand{\eeq}{\end{equation}}
\newcommand{\g}{\gamma}
\renewcommand{\l}{\lambda}
\newcommand{\gJ}{\mathfrak{J}}
\newcommand{\bC}{\mathbb{C}}
\newcommand{\bR}{\mathbb{R}}
\newcommand\R{{\mathbb R}}
\newcommand{\bZ}{\mathbb{Z}}
\newcommand{\fJ}{\mathfrak{J}}
\newcommand{\ga}{\mathfrak{a}}
\newcommand{\gc}{\mathfrak{c}}
\renewcommand{\gg}{\mathfrak{g}}
\newcommand{\stab}{\mathfrak{stab}}
\newcommand{\gk}{\mathfrak{k}}
\newcommand{\gp}{\mathfrak{p}}
\newcommand{\gq}{\mathfrak{q}}
\newcommand{\gr}{\operatorname{gr}}
\newcommand{\gs}{\mathfrak{s}}
\newcommand{\gu}{\mathfrak{u}}
\newcommand{\gsl}{\mathfrak{sl}}
\newcommand{\ggl}{\mathfrak{gl}}
\newcommand\GL{\mathrm{GL}}
\renewcommand\sp{\mathfrak{sp}}
\newcommand{\cC}{\mathcal{C}}
\newcommand{\cD}{\mathcal{D}}
\newcommand{\cF}{\mathcal{F}}
\newcommand{\cJ}{\mathcal{J}}
\newcommand{\cK}{\mathcal{K}}
\newcommand{\cL}{\mathcal{L}}
\newcommand{\clL}{\mathscr{L}}
\newcommand{\cM}{\mathcal{M}}
\newcommand{\cN}{\mathcal{N}}
\newcommand{\p}{\partial}
\renewcommand{\square}{\kern1pt\vbox
{\hrule height 0.6pt\hbox{\vrule width 0.6pt\hskip 3pt
\vbox{\vskip 6pt}\hskip 3pt\vrule width 0.6pt}\hrule height0.6pt}\kern1pt}
\DeclareMathOperator\ad{ad}
\renewcommand\Re{\operatorname{\mathfrak{Re}}}
\newcommand{\Hom}{{\operatorname{Hom}}}
\newcommand{\ev}{{\operatorname{ev}}}
\newcommand{\wt}{\widetilde}
\newcommand{\wh}{\widehat}
\newcommand{\be}{\begin{equation}}
\newcommand{\ee}{\end{equation}}
\def\<#1,#2>{\langle\,#1,\,#2\,\rangle}
\newcommand{\arr}{\begin{array}{rlll}}
\newcommand{\ea}{\end{array}}
\newcommand{\bea}{\begin{eqnarray}}
\newcommand{\eea}{\end{eqnarray}}
\newcommand{\bean}{\begin{eqnarray*}}
\newcommand{\eean}{\end{eqnarray*}}
\newcommand{\under}[1]{{\underline{#1}\,}}
\title[$3$-nondegenerate CR manifolds in dimension $7$ (I)]
{On $3$-nondegenerate CR manifolds in dimension $7$ (I):\\ the transitive case}
\address{Boris Kruglikov, Department of Mathematics and Statistics, UiT The Arctic University of Norway, Troms\o\,  9037, Norway}
\email{boris.kruglikov@uit.no}
\address{Andrea Santi, Dipartimento di Matematica,
Universit\'a degli Studi di Roma ``Tor Vergata'',
Via della ricerca scientifica 1, 00133 Roma, ITALY}
\email{asanti.math@gmail.com, santi@mat.uniroma2.it}
\thanks{}
\keywords{$k$-nondegenerate CR manifold, Levi degenerate CR manifold}
\subjclass[2020]{32V40, 32V35, 53C30, 22E15}
\author{Boris Kruglikov, Andrea Santi}
\begin{document}

\begin{abstract} 
We investigate $3$-nondegenerate CR structures in the lowest possible dimension $7$, and one of our goals
is to prove Beloshapka's conjecture on the symmetry dimension bound for hypersurfaces
in $\mathbb C^4$. We claim that $8$ is the maximal symmetry dimension of
$3$-nondegenerate CR structures in dimension $7$, which is achieved on the homogeneous model. 
This part (I) is devoted to the homogeneous case: we prove that the model is locally the only 
homogeneous $3$-nondegenerate CR structure in dimension $7$.
\end{abstract}

\maketitle
\null \vspace*{-.50in}

\tableofcontents

\section{Introduction}
\setcounter{equation}{0}\setcounter{section}{1}

An {\it almost CR-structure} on a connected manifold $\cM$ is a subbundle $\cD\subset T\cM$ of the tangent bundle, 
called the CR-distribution, endowed with a field of complex structures
$\cJ\in\Gamma(\operatorname{End}(\cD))$. 
The complexified CR-distribution splits as the direct sum $\cD\otimes\mathbb C=\cD_{10}\oplus \cD_{01}$ of its holomorphic and antiholomorphic parts, where
 $$
\cD_{10}=\{X-i\cJ X\mid X\in \cD\},\;\;\cD_{01}=\{X+i\cJ X\mid X\in \cD\}.
 $$
The almost CR-structure $(\cM,\cD,\cJ)$ is called integrable, or a CR-structure, if the distribution $\cD_{10}$ (or equivalently the distribution
$\cD_{01}=\overline{\cD_{10}}$) is involutive.
In this paper we consider only {\it CR-hypersurfaces}, i.e., integrable CR manifolds 
of CR-codimension equal to $1$, and  denote by $n=\frac12(\dim \cM-1)=\operatorname{rank}_{\mathbb C}(\cD)$
the CR-dimension.

We are interested in the infinitesimal symmetry algebra $\gg$ of the CR structure $(\cM,\cD,\cJ)$. We will work in the analytic category, so 
all objects are real-analytic, and $\gg$
can consist of infinitesimal analytic transformations on the entire $\cM$ or defined on a fixed domain $\mathfrak U\subset\cM$, but it can also be the space of germs around a fixed point $x\in\cM$  of the infinitesimal analytic automorphisms of this structure. In what follows we can adapt any of these notions, and we interchangeably use either
the notation $\gg=\inf(\cM,\cD,\cJ)$ or $\gg=\inf(\cM,\cD,\cJ;x)$, if we wish to emphasize locality around $x\in\cM$. We also remark that $(\cM,\cD,\cJ)$ can be assumed {\it regular}, i.e., the rank of all involved bundles are constant.  In fact, they are constant on an open dense subset of $\cM$ due to upper-semicontinuity, and we may restrict to it by analyticity. For locally homogeneous CR manifolds, our results automatically
hold in the smooth situation, see \S\ref{sec:2.1}. We recall that $(\cM,\cD,\cJ)$ is locally homogeneous at $x\in \cM$ if there exists
a finite-dimensional Lie subalgebra $\gg'$ of $\gg=\inf(\cM,\cD,\cJ;x)$ such that the evaluation map $\ev_x:\gg'\rightarrow T_x\cM$ at $x\in \cM$ is
surjective. We say $(\cM,\cD,\cJ)$ is locally homogeneous if it is so at every point $x\in\cM$. Locally homogeneous CR manifolds will always be considered up to local CR-equivalence. If $(\cM,\cD,\cJ)$ admits a transitive action of a finite-dimensional Lie group by CR diffeomorphisms, then it is called globally homogeneous or, for brevity, homogeneous.

The {\it Levi form\/} of a CR-hypersurface is the tensor  
$\clL:\cD_{10}\otimes\cD_{01}\longrightarrow\nu_\cM^\mathbb C$,
 % (sometimes in $\Lambda^2\cD_{10}^*\otimes\nu_\cM^\mathbb C$ via conjugation)
where $\nu_\cM^\mathbb C=(T\cM/\cD)\otimes\mathbb C$ is the complexified normal bundle, 
given by the formula 
 $$
\clL(Z,\overline{Z'})=i[Z,\overline{Z'}]\!\!\!\mod\cD\otimes\mathbb C\;.
 $$ 
In the case the Levi form is nondegenerate, and identifying locally $\nu_\cM=T\cM/\cD$ with $\R$, 
this is a Hermitian form on the CR-distribution defined up to a real scalar multiple at each point.

As shown in classical works \cite{C,CM,Ta1,Ta2}, the dimension of the symmetry algebra
of a Levi-nondegenerate connected CR-hypersurface $\cM$ of CR-dimension $n$ 
does not exceed $n^2+4n+3$. If $\dim\inf(\cM,\cD,\cJ)$ attains this bound then $\cM$ is {\it spherical}, 
i.e., locally CR-equivalent to an open subset of the hyperquadric, which can be written as the tube
 \begin{equation*}
{\mathcal Q}^{2n+1}_{(p)}=\Bigl\{z=(z_0,\dots,z_n)\in\mathbb C^{n+1}:x=\Re(z)\text{ satisfies }
x_0=x_1^2+\dots+x_p^2-x_{p+1}^2-\dots-x_n^2\Bigr\},
 \end{equation*}
for some $0\le p\le n/2$. 

In the absence of Levi-nondegeneracy, finding the maximal dimension of the symmetry algebra is much harder. 
It is known \cite[\S12.5]{BER} that $\gg=\inf(\cM,\cD,\cJ)$ is finite-dimensional
provided that $\cM$ is {\it holomorphically nondegenerate}. 
Moreover, under the above regularity assumption on the constancy of ranks
of the involved bundles, this is equivalent to
$k$-nondegeneracy for some $1\le k\le n$, see \cite[\S11.1-11.3]{BER}, where $k$ coincides therein with the Levi number, see also \cite[Appendix]{KZ}. (The case $k=1$ corresponds to Levi-nondegeneracy.) We recall this notion in \S\ref{sec:2.1} in relation to the Freeman filtration \cite{Fr}, but presently notice that a
Levi-degenerate $(\cM,\cD,\cJ)$ has a Cauchy characteristic distribution $\operatorname{Ch}(\cD)\subset \cD$ that is independent of $\cJ$,
and that the $2$- and higher nondegeneracy conditions measure a failure of straightening of the distribution $\operatorname{Ch}(\cD)_{10}$ and its subfiltrands. From $3$-nondegeneracy on,
the Freeman sequence relies on the complex structure $\cJ$. 

Unless otherwise specified, we refer to a CR manifold $(\cM,\cD,\cJ)$ as a ``model'' in a class of CR manifolds (for instance, the CR-hypersurfaces of a certain fixed dimension and order of $k$-nondegeneracy, and, if required, with constant CR-symbol) if it is locally the most symmetric, i.e., at every point $x\in \cM$ the Lie
algebra $\gg=\inf(\cM,\cD,\cJ;x)$ of germs of infinitesimal CR-automorphisms has
the largest possible dimension in the class. Note that there are other usages of the terminology ``model'' in the literature, as in \cite[p. 305]{BER}, \cite{B1}, and \cite{San}, ours follows the conventions from Cartan geometry.

Regarding the maximal symmetry dimension in CR geometry,
the following conjecture is a variant of Beloshapka's conjecture, cf. \cite[p.~38]{B2}. 

 \begin{conjecture}\label{beloshapka}
For any real-analytic and connected holomorphically nondegenerate CR-hyper\-surface $(\cM,\cD,\cJ)$ of CR-dimension $n$ one has $\dim\gg\le n^2+4n+3$, with the maximal value $n^2+4n+3$ attained only if on a dense open set $\cM$ is spherical.
 \end{conjecture}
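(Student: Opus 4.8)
The plan is to bound $\dim\gg$ separately according to the Levi number $k$, exploiting the dichotomy between the Levi-nondegenerate case $k=1$ and the genuinely degenerate cases $k\ge 2$. By the regularity hypothesis together with analyticity, one may pass to a dense open subset of $\cM$ on which all ranks in the Freeman filtration $\cD=F_0\supset F_1\supset\cdots$ are constant, so that $k$ becomes a well-defined integer invariant; it then suffices to bound $\dim\gg$ on such a set for each fixed $k$. For $k=1$ the classical Chern--Moser--Tanaka theory already gives $\dim\gg\le n^2+4n+3$, with equality forcing local equivalence with the hyperquadric and realizing the extremal value through the pseudo-unitary algebras $\su(p+1,n-p+1)$, whose dimension is exactly $(n+2)^2-1=n^2+4n+3$. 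The entire remaining content of the conjecture is therefore the \emph{strict} inequality $\dim\gg<n^2+4n+3$ for every $k\ge 2$.

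For the degenerate range $k\ge 2$ the idea is to control $\gg$ by a Tanaka-type prolongation of the CR symbol. Any infinitesimal automorphism preserves every object canonically attached to the structure --- the Cauchy characteristic distribution $\operatorname{Ch}(\cD)$, each Freeman subbundle $F_j$, and the Levi form --- and hence induces an automorphism of the associated graded nilpotent Lie algebra together with its reductive part. This embeds $\gg$ into the prolongation $\gm\oplus\gg_0\oplus\gg_1\oplus\cdots$ of the graded symbol $(\gm,\gg_0)$, the prolongation being finite precisely because of holomorphic nondegeneracy ($k\le n$). The problem thus reduces to a purely algebraic estimate: to show that for every $k$-nondegenerate CR symbol with $k\ge 2$ the total dimension of its prolongation stays strictly below $n^2+4n+3$. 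Part~(I) settles the first nontrivial homogeneous instance $n=3$, $k=3$, where the symbol is constant and the prolongation is computed explicitly.

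The principal obstacle is the proliferation and non-constancy of CR symbols once $k\ge 2$: unlike the rigid Levi-nondegenerate symbol, the graded symbol may jump along $\cM$, and the prolongation must be controlled uniformly over the whole moduli of admissible symbols rather than for a single model. I expect the hardest part to be the representation-theoretic bookkeeping that bounds $\dim(\gg_1\oplus\gg_2\oplus\cdots)$ in terms of the degeneracy data, since the positive part of the prolongation is exactly where extremal symmetry could accumulate; keeping it below the Levi-nondegenerate ceiling for all $k\ge 2$ and all symbols is the crux. A secondary difficulty is that the reduction to a constant symbol need not be canonical, so one must either prove a semicontinuity estimate relating a varying symbol to a constant degeneration or argue directly with the filtered (non-graded) structure --- the homogeneous analysis carried out here isolates and resolves the algebraic heart of this problem, leaving the extension to arbitrary structures to part~(II).
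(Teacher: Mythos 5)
There is a genuine gap: what you have written is a programme, not a proof. The statement is labelled a conjecture in the paper precisely because it is open in general; the paper itself only contributes the case $n=3$ (and, in this part, only the locally transitive situation), with $n=1,2$ quoted from the literature. Your reduction to the Levi number $k$ and the observation that the whole content lies in the strict inequality $\dim\gg<n^2+4n+3$ for $k\ge 2$ are correct, but you never establish that inequality for any $k\ge2$ and any $n$ --- you explicitly defer "the representation-theoretic bookkeeping" and the "extension to arbitrary structures". Since that bookkeeping \emph{is} the conjecture, nothing has been proved beyond the classical $k=1$ case.

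Moreover, the one substantive step you do assert --- that $\gg$ embeds into a finite-dimensional Tanaka prolongation $\gm\oplus\gg_0\oplus\gg_1\oplus\cdots$ of the graded CR symbol, finite "precisely because of holomorphic nondegeneracy" --- is not justified and runs into known obstructions that this paper's machinery is designed to circumvent. The Tanaka prolongation of the contact symbol $(\gm,\gg_0)$ alone is infinite-dimensional; to get finiteness one must incorporate the complex structure and the full degeneracy data, and such a prolongation scheme exists in the literature only for $k=2$ (the bigraded prolongation of Porter--Zelenko). For $k\ge3$ the Freeman filtration is not bracket-compatible (Remark 2.4(2) and Example 2.7 of the paper), the evaluation maps $\wh\gg^p\to\cF^p|_x$ need not be surjective, and $\gg$ is in general only a \emph{filtered deformation} of its associated graded algebra, which need not be trivial --- the paper must prove filtration rigidity by a Spencer cohomology computation even in the single case $n=3$, $k=3$. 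Your proposal treats all of this as routine, whereas it is exactly where the difficulty lives; as written, the argument does not close even for the case the paper actually handles, let alone for general $n$ and $k$.
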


For $n=1$ the above conjecture is true since a 3-dimensional holomorphically nondegenerate CR-hypersurface always has points of Levi-nonde\-generacy. For $n=2$ the conjecture was established in \cite{IZ,MS}, where the proof relied on the reduction of 5-dimensional uniformly $2$-non\-degenerate CR-structures to absolute parallelisms. See also \cite{Eb1, MP}. 
A CR manifold in such class with symmetry algebra of maximal dimension is locally equivalent
to the tube over the future light cone
 \begin{equation}\label{5D}
{\mathcal C}^5=\Bigl\{z=(z_0,z_1,z_2)\in\mathbb C^3:x=\Re(z)\text{ satisfies }
x_0^2=x_1^2+x_2^2,\;x_0>0\Bigr\}.
 \end{equation}
This model is locally homogeneous with symmetry algebra $\gg=\mathfrak{so}(2,3)\cong \mathfrak{sp}_4(\mathbb R)$, however it is
not homogeneous. There are precisely two globally homogeneous models with automorphism
groups of maximal dimension, namely $G/H$ with $G=SO(2,3)^o$ (the connected component of the unity of $SO(2,3)$) and $G=Sp_4(\mathbb R)$, respectively, and $H\cong GL_1(\mathbb C)\ltimes \mathrm{Heis}(3)$, see \cite[Corollary 5.5]{IZ}.

Further results in this direction for $n=1$ (sphericity of $\cM$ near $x$ if $\dim\inf(\cM,\cD,\cJ;x)>5$) and $n=2$ 
(sphericity of $\cM$ near $x$ if $\dim\inf(\cM,\cD,\cJ;x)>11$) are contained in \cite{KS} and \cite{IK} respectively.
(For related results in the case $n>2$ see \cite{Kr}.)

Our goal is to prove Beloshapka's conjecture for $n=3$. In fact, this was recently done by V.\ Beloshapka
himself \cite{B2}, using the homological technique of Poincar\'e, but our approach is quite different
and it leads to finer results. In particular, we completely answer the questions on  
homogeneous $3$-nondegenerate 
CR manifolds in dimension $7$ posed by V.\ Beloshapka to the second author, during Vitushkin's seminar 
at Moscow State University in April 2021: determine whether there exist homogeneous $3$-nondegenerate 
CR manifolds in dimension $7$ besides the examples  
known at that time \cite{Moz, FK2, San} 
and settle the existence of 
3-nondegenerate
CR manifolds with a stabilizer subalgebra of large dimension.

The proof in \cite{B2} goes as follows. If there exists a point $x\in \cM$ of Levi-nondegeneracy on $\cM$, 
then $\dim\inf(\cM,\cD,\cJ;x)\leq24$. If $\cM$ is uniformly Levi-degenerate
but 2-nondegenerate then $\dim\inf(\cM,\cD,\cJ;x)\leq17$ and, finally, if $\cM$ is 
3-nondegenerate then $\dim\inf(\cM,\cD,\cJ;x)\leq20$.

In fact, for 2-nondegenerate $(\cM,\cD,\cJ)$ satisfying a constancy assumption on CR-symbols the situation can be improved due to the results of \cite{PZ,SZ}:
a combination of those implies $\dim\inf(\cM,\cD,\cJ;x)\leq16$ and this bound is sharp.
This includes the classification of CR-symbols, indeed there are more $2$-nondegenerate models than those in dimension $5$,
i.e., \eqref{5D}. 

We are going to significantly improve the dimension bound for the $3$-nondegenerate CR structures in dimension $7$ from \cite{B2}.
Namely, the first of our main results is: 
\begin{theorem}\label{T1}
Let $(M ,\cD,\cJ)$ be a 3-nondegenerate 7-dimensional real-analytic connected CR-hypersurface. Assume 
the symmetry algebra $\gg$ acts with an open orbit.\footnote{This assumption is not essential and can be removed in light of our follow-up paper \cite{KS2} devoted to the intransitive case. However, since this paper is devoted to the locally transitive case, we include it here.}
Then $\dim\gg\leq8$ and 
this bound is sharp.
 \end{theorem}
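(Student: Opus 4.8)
The plan is to reduce the bound to a finite-dimensional computation of the Tanaka prolongation of the CR-symbol, and then to exploit the rigidity forced by $3$-nondegeneracy to control that prolongation. First I would use the open-orbit hypothesis to restrict to the locally homogeneous situation on a dense open set, where by analyticity all the bundles of the Freeman filtration $\cD_{10}=\cD_{10}^{(0)}\supset\cD_{10}^{(1)}\supset\cD_{10}^{(2)}\supset\cD_{10}^{(3)}=0$ have constant rank, so that the CR-symbol is constant. Fixing a point $x$ there, local transitivity makes $\ev_x\colon\gg\to T_x\cM$ surjective, whence $\dim\gg=7+\dim\stab$ with $\stab=\ker\ev_x$ the isotropy subalgebra; the whole assertion is therefore equivalent to $\dim\stab\le1$. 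The weighting of $T_x\cM$ determined by $\operatorname{Ch}(\cD)$ and its Freeman subfiltrands filters $\gg$, and Tanaka theory gives a graded embedding of $\gr(\gg)$ into the prolongation $\ga=\bigoplus_p\ga_p$ of the symbol, with $\ga_{<0}=\gm\cong T_x\cM$ recovered fully by transitivity and $\gr(\stab)$ mapped into $\ga_{\geq0}=\ga_0\oplus\ga_1\oplus\cdots$ (equivalently, one may phrase this as a canonical absolute parallelism on an $8$-dimensional bundle, as for the $5$-dimensional model of \cite{IZ,MS}). Hence it suffices to prove $\dim\ga_{\geq0}\le1$.

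Next I would pin down the symbol. On $\bC^3$ the Levi form can a priori have rank $0,1,2,3$, but $3$-nondegeneracy leaves only one possibility: rank $0$ is holomorphically degenerate, rank $3$ is Levi-nondegenerate, and rank $2$ leaves a one-dimensional Levi kernel and so yields at most $2$-nondegeneracy; only rank $1$ is compatible with $3$-nondegeneracy, giving the complete flag $\dim_{\bC}\cD_{10}^{(j)}=3,2,1,0$. This determines a graded nilpotent Lie algebra $\gm$ of dimension $7$, carrying complex structures on its graded pieces together with the Levi form and two higher ``fundamental forms'' coming from the remaining Freeman steps. As the excerpt stresses, from $3$-nondegeneracy onward these forms genuinely involve $\cJ$, so the symbol must be described over $\bC$ with $\cJ$ retained as part of the data; I would classify the admissible such symbols and expect the list to be essentially rigid.

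The heart of the matter, and the step I expect to be the main obstacle, is the computation of $\ga_0$ and the vanishing of the positive prolongation. Here $\ga_0$ is the algebra of grading-preserving derivations of $\gm$ that simultaneously preserve the complete flag, the complex structures, the rank-$1$ Levi form, and both fundamental forms. Stabilising a complete flag already cuts one down to a Borel-type algebra, and imposing invariance of the Hermitian and higher-degree forms on top of this should collapse $\ga_0$ onto the one-dimensional span of the grading element (a scaling, possibly combined with a compensating phase rotation), giving $\dim\ga_0=1$. One then runs the Tanaka prolongation: a nonzero element of $\ga_p$ with $p\ge1$ is a degree-$p$ derivation-type map obeying the cocycle condition, and I would rule these out by a direct Spencer-cohomology computation, so that $\ga_p=0$ for all $p\ge1$. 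The delicate point is precisely that the fundamental forms are $\cJ$-linear or $\cJ$-conjugate-linear, so these vanishing arguments must be organised with respect to the complex structure rather than by a real dimension count, and this is where essentially all of the case analysis resides.

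Putting the three steps together yields $\dim\gg\le\dim\gm+\dim\ga_{\geq0}\le 7+1=8$. For sharpness I would take the known homogeneous $7$-dimensional $3$-nondegenerate hypersurface of \cite{FK2,San}, check that its symbol is the admissible one found above, and verify that its symmetry algebra has dimension exactly $8$ with $1$-dimensional isotropy realising the grading element of $\ga_0$; this shows the bound is attained.
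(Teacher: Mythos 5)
Your overall strategy---reduce to the locally homogeneous case, bound $\dim\stab$ by the non-negative part of a reduced Tanaka prolongation of the symbol, and exhibit the tube model for sharpness---is not the route the paper takes, and unfortunately its central step fails. The symbol identification is fine (strict decrease of the Freeman flag forces $\dim_\bC\cF^0_{10}=2$, $\dim_\bC\cF^1_{10}=1$, hence a rank-$1$ Levi form), and so is the sharpness part. The gap is the claim that preserving ``the complete flag, the complex structures, the rank-$1$ Levi form, and both fundamental forms'' collapses $\ga_0$ to the span of the grading element. It does not. Take an adapted basis $e_1\in\cF^1_{10}$, $e_2\in\cF^0_{10}$, $e_3\in\cD_{10}$ of $\cD_{10}|_x\cong\bC^3$. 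A flag-preserving $A\in\ggl_3(\bC)$ is upper triangular; since $\cF^0_{10}$ is the left kernel of $\clL$ and $\cF^0_{01}$ lies in the right kernels of the descended forms $\clL_2,\clL_3$, the three forms only constrain the \emph{diagonal} of $A$ (one gets $\lambda=2\Re A_{33}$, $A_{22}=A_{33}-\overline{A_{33}}$, $A_{11}=A_{22}-\overline{A_{33}}$), leaving $A_{12},A_{13},A_{23},A_{33}\in\bC$ completely free. Thus this $\ga_0$ has real dimension $8$, and your method yields at best $\dim\gg\le 7+8+\dim\ga_{\geq 1}$, nowhere near $8$. This is the well-known fundamental obstruction for finitely nondegenerate structures: the pointwise symbol together with the higher-order Levi forms is far too weak an invariant (already in the $2$-nondegenerate $5$-dimensional case one needs the ``modified symbols'' and bigraded prolongation of \cite{PZ} rather than the naive reduced prolongation). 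You would need to identify substantially finer pointwise invariants preserved by the isotropy and prove they cut $\ga_{\geq 0}$ down to dimension $1$; nothing in the proposal supplies this.

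A second, related difficulty your outline does not address is that the graded object $\gr(\gg)$ need not ``see'' the Freeman bundles at all: the evaluation maps $\ev_x:\wh\gg^p\to\cF^p|_x$ can fail to be surjective and the CR algebra can fail to be aligned with any grading (the paper's $9$-dimensional Example \ref{ex:4nondeg} is exactly such a counterexample), so the passage from tensorial invariants of the symbol to constraints on $\gr(\stab)$ is not automatic. The paper avoids prolongation of the $7$-dimensional symbol altogether: it uses the coarser contact filtration (whose negative part is only the $3$-dimensional Heisenberg algebra), embeds $\gr(\gg)$ into the contact algebra relative to the universal CR algebra $(\gc,\gu)$, and then classifies all possible CR algebras $(\gg,\gq)$ by a case analysis on the rank of $\ev_x:\wh\gg^1\to\cF^1|_x$, together with a filtration-rigidity argument for $\gsl_2(\R)\ltimes S^3\R^2$; the dimension bound is then a corollary of the resulting uniqueness of the local model.
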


Our proof uses various techniques from differential geometry, Lie theory, different notions of Tanaka prolongations, etc.
In particular we treat in a completely different manner the cases where the symmetry algebra $\gg$ acts locally transitively on $\cM$
(there exists an open orbit) and intransitively (there exist local invariants for the action). 
Due to this reason, we split the presentation of our results in two separate works, cf. \cite{KS2}. 
This part (I) is dedicated to the locally transitive case. By this we mean that $\cM$ has an open subset $\mathfrak U$, 
where the structure is locally homogeneous -- since restriction to an open subset does not reduce the symmetry
dimension, we lose no generality in assuming $\cM$ itself to be locally homogeneous. 

In \cite{San} several results about homogeneous CR-hypersufaces were obtained, in particular, this concerned 
3-nondegenerate CR structures in dimension $7$. 
We will show that the abstract model in the sense of  \cite{San} derived in loc.cit.\ is also a tube, namely, it can be realized as follows
 \begin{align}
\label{7D}
{\mathcal R}^7=\Bigl\{z=&\,(z_0,z_1,z_2,z_3)
\in\mathbb C^4:x=\Re(z)\text{ satisfies }\notag\\
&x_0=r^3 u - 3r^2 s t,\ x_1=r^2 s u + (r^3\,–\,2 r s^2) t,\ x_2=r\,s^2 u - (s^3 - 2 r^2 s)\, t,\\
&x_3=s^3 u + 3 r s^2 t,\;\text{for real}\;r,s,t,u\;\,\text{s.t.}\;\,(r,s)\neq(0,0),\, t\neq0\notag
\Bigr\}
\end{align}
In other words, ${\mathcal R}^7=\Sigma^3\times i\R^4\subset\bC^4$, where $\Sigma^3$
is described by equations \eqref{7D} on $x\in\mathbb R^4$. Note that this description is not a parametrization, 
as 4 parameters are involved for a 3-dimensional surface. Actually all fixed $u\neq0$ yield the same 
open dense subset in $\Sigma^3\subset\R^4_x$, complementary to the codimension 1 subset given by $u=0$.
Alternatively, $\Sigma^3$ is covered by two charts, parametrized by
$x_0=r^3,\ x_1=r^2(s+t),\ x_2=r\,s\,(s+2t),\ x_3=s^2(s+3t)$, with $r,t\neq0$, and
$x_0=r^2(r-3t)$, $x_1=r\,s\,(r-2t)$, $x_2=s^2(r-t)$, $x_3=s^3$, with $s,t\neq0$.
One can prove that the CR hypersurface \eqref{7D} coincides with the example from the end of \cite[\S 5.1]{FK2}.
%x_0^2x_3^2-6x_0x_1x_2x_3+4x_0x_2^3+4x_1^3x_3-3x_1^2x_2^2= 0

The symmetry algebra of ${\mathcal R}^7$ is generated by the affine symmetry algebra
of $\Sigma^3\subset\R^4_x$ and the translations along the subspace $\R^4_y$, namely it is $\mathfrak{gl}_2(\R)\ltimes\R^4$, where $\R^4=S^3\R^2$.

We can compute the automorphism group as well. Moreover, we will develop further the 
%locally homogeneous 
technique from \cite{San} and exploit Lie theory to arrive at our second main result, 
the classification of $3$-nondegenerate locally homogeneous CR manifolds in dimension 7:

 \begin{theorem}\label{T2}
There exists a unique locally homogeneous $3$-nondegenerate CR structure in dimension $7$, 
and it is  locally isomorphic to the model ${\mathcal R}^7$. Furthermore:
 \begin{enumerate}
\item 
This model is globally homogenous, i.e.,
the automorphism group $G$ acts transitively, and 
$G\cong\operatorname{GL}_2(\mathbb R)\ltimes S^3 \mathbb R^2$;
\item
Every connected globally homogenous $3$-nondegenerate CR manifold in dimension $7$ with  automorphism group of maximal dimension is isomorphic 
to a finite or countable covering of ${\mathcal R}^7$. The automorphism group is the universal cover
 $$\widetilde{G}\cong\widetilde{\operatorname{GL}_2(\mathbb R)}\ltimes S^3 \mathbb R^2$$ of $G$
or a quotient by a subgroup $m\bZ$ of the discrete normal subgroup $\bZ=\pi_1(G)$ of $\widetilde G$.
\end{enumerate}
\end{theorem}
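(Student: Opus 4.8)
The plan is to reduce Theorem~\ref{T2} to a classification in Lie theory and resolve it via the CR-symbol and its Tanaka prolongation. First I would use local homogeneity to encode $(\cM,\cD,\cJ)$ near a generic point by the infinitesimal datum $(\gg,\gh)$, where $\gg=\inf(\cM,\cD,\cJ;x)$, $\gh=\gg_x$ is the isotropy, and the structure is reconstructed from a $\gh$-invariant hyperplane $\cD_x\subset\gg/\gh\cong T_x\cM$ equipped with a $\gh$-invariant complex structure, following the scheme of \cite{San}. Surjectivity of $\ev_x$ gives $\dim\gg\ge 7$, while Theorem~\ref{T1} gives $\dim\gg\le 8$; hence $\dim\gg\in\{7,8\}$ with $\dim\gh=\dim\gg-7$.

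The next step is to pin down the CR-symbol. For a $3$-nondegenerate $7$-dimensional hypersurface the Freeman filtration of $\cD_{10}$ has ranks $3,2,1,0$---the drops being forced to equal one at each step---so the associated graded is tightly constrained; incorporating the complex structure $\cJ$, on which $3$-nondegeneracy depends, I expect a unique admissible graded nilpotent CR-symbol $\gm=\gm_{-3}\oplus\gm_{-2}\oplus\gm_{-1}$ of dimension $7$. I would then compute its full Tanaka prolongation together with the grading-$0$ subalgebra $\gg_0$ of $\cJ$-compatible symbol derivations. By Theorem~\ref{T1} this prolongation is at most $8$-dimensional; realizing the bound forces $\gg_0$ to be one-dimensional and the positive part to vanish, and the resulting $8$-dimensional graded algebra should be isomorphic, as an abstract Lie algebra, to $\mathfrak{gl}_2(\R)\ltimes S^3\R^2$ acting through the symmetric-cube representation. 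This exhibits the flat model.

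The decisive step, which I expect to be the main obstacle, is rigidity: to show that \emph{every} locally homogeneous structure carrying this symbol is flat, thereby excluding any sub-maximal ($\dim\gg=7$, $\gh=0$) example and any competing homogeneous structure. The natural route is to observe that the obstruction to flatness is the harmonic curvature of the canonical Cartan/Tanaka connection, which takes values in the positive-degree part of the Tanaka cohomology $H^2(\gm,\gg)$; homogeneity forces this curvature to be a single $\gg_0$-invariant constant, so a vanishing computation of that cohomology in positive degree makes it zero. An equivalent, more hands-on route is to classify directly all brackets on $\gg$ extending the fixed symbol subject to the Jacobi identity and CR-integrability, and to check that the model is the only solution. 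Either way the abstract homogeneous model is unique; matching it with the concrete tube then reduces to verifying that $\mathcal{R}^7$ is $3$-nondegenerate and has symmetry algebra $\mathfrak{gl}_2(\R)\ltimes S^3\R^2$, which identifies it with the abstract model and proves the first assertion of Theorem~\ref{T2}.

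It remains to integrate $\gg$ and settle the global claims. For (1) I would exhibit the transitive action of $G=\operatorname{GL}_2(\R)\ltimes S^3\R^2$ on $\mathcal{R}^7=\Sigma^3\times i\R^4$, with $S^3\R^2$ acting by imaginary translations and $\operatorname{GL}_2(\R)$ acting through $S^3$ so as to preserve the discriminant hypersurface $\Sigma^3$ of binary cubics with a repeated root; the stabilizer of a cubic with a double root is one-dimensional, so the action is transitive with $G\cong\operatorname{GL}_2(\R)\ltimes S^3\R^2$, and one checks that every CR-automorphism already lies in $G$, so that $\operatorname{Aut}(\mathcal{R}^7)=G$ has exactly the two components of $\operatorname{GL}_2(\R)$. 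For (2), any connected globally homogeneous example of maximal symmetry is locally isomorphic to $\mathcal{R}^7$ and hence a covering of it; since $\mathcal{R}^7$ is homotopy equivalent to $\operatorname{GL}_2(\R)^o\simeq S^1$ one has $\pi_1(\mathcal{R}^7)\cong\bZ=\pi_1(G)$, and the connected coverings with automorphism group of maximal dimension correspond to the subgroups $m\bZ\subseteq\bZ$, giving either the universal cover with automorphism group $\widetilde G\cong\widetilde{\operatorname{GL}_2(\R)}\ltimes S^3\R^2$ or its quotients $\widetilde G/m\bZ$.
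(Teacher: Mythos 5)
Your outline has the right overall shape (reduce to a Lie-theoretic classification, then globalize via coverings), but the central mechanism you propose does not exist for this class of structures, and there is a circularity. First, the circularity: in this paper the bound $\dim\gg\le 8$ of Theorem~\ref{T1} in the transitive case is \emph{obtained from} the classification asserted in Theorem~\ref{T2}; you cannot invoke it to cap the prolongation. Second, and more seriously, there is no $7$-dimensional $3$-graded fundamental CR-symbol $\gm=\gm_{-3}\oplus\gm_{-2}\oplus\gm_{-1}$ here. For a CR hypersurface the bracket-generated Tanaka symbol is the Heisenberg algebra in two steps ($[\cD,\cD]=T\cM$); the Freeman filtration $\cF^1\subset\cF^0\subset\cD_{10}$ is \emph{not} a filtration by iterated brackets (its terms are Lie subalgebras for $p\ge0$), so it does not feed into the standard Tanaka prolongation/harmonic-curvature machinery, and no canonical Cartan connection with curvature in $H^2_{+}(\gm,\gg)$ is available --- constructing such parallelisms for $3$-nondegenerate structures is precisely what is open. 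The paper instead uses the contact filtration \eqref{eq:filtration}, whose negative part is only the $3$-dimensional Heisenberg algebra $\gg_{-2}\oplus\gg_{-1}\cong\big(T_x\cM/\cD|_x\big)\oplus\big(\cD|_x/\Re(\cF^0)|_x\big)$, encodes the degeneracy data in the non-negative degrees via the universal CR algebra $(\gc,\gu)$, and proves rigidity by computing Spencer cohomology $H^{d,2}(\gg_-,\gr(\gg))$ of filtered deformations together with the spectrum of a grading element.

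Your ``more hands-on route'' (classify all brackets extending a fixed graded skeleton subject to Jacobi and integrability) is closer to what the paper actually does, but it omits the two points where the real work lies. (i) The case analysis is governed by the rank ($0$, $1$, or $2$) of the evaluation map $\ev_x:\wh\gg^1\to\cF^1|_x$, and the simply transitive case $\gr(\gg)\cong\gsl_2(\R)\ltimes S^3\R^2$ survives the graded analysis; it is eliminated only by a separate filtration-rigidity computation. (ii) Even after $\gg\cong\gr(\gg)$ is fixed, the complex subalgebra $\gq$ is \emph{not} determined by $\gg$: non-aligned identifications produce genuine $1$-parameter families of CR algebras (Examples \ref{ex:1-parameter}--\ref{ex:1-parameter-tris}) that must be shown, by explicit automorphisms $e^{\ad_X}$ or embeddings into $(\gg,\gq)$, to be the model in disguise. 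A harmonic-curvature vanishing argument, even if it existed, would not address this ambiguity in $\cJ$. On the global side your picture is essentially correct ($\pi_1(\mathcal R^7)\cong\bZ$, coverings indexed by $m\bZ$), though you should also rule out further quotients (the paper shows $G$ has no nontrivial discrete normal subgroup and $H$ is maximal) and justify the group structure on the universal cover of the \emph{disconnected} group $GL_2(\R)$, where the deck group $\bZ$ is normal but not central.
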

 
We note that $G$ is not connected (but it is Zariski connected, and a posteriori we see that symmetries are algebraic) 
while ${\mathcal R}^7$ is connected, and the action of $G$ on ${\mathcal R}^7$ is effective. The classification of globally homogeneous $3$-nondegenerate CR manifolds in dimension $7$ is then finished in \cite{KS2}, see part (1) of Theorem 3 therein.

\smallskip

There is a growing interest in uniformly Levi-degenerate CR structures, in particular, many recent
papers marked progress for 2-nondegenerate ones \cite{Gr,PZ,SZ}. In \cite{Lo} a question was raised on classifying Levi-degenerate
locally homogeneous CR structures in dimension 7, as the next step after the celebrated 
article \cite{FK2}. Theorem \ref{T2} finishes this classification problem in the 3-nondegenerate case.
%there is one homogeneous 3-nondegenerate simply connected geometry
In the 2-nondegenerate case, there are nine locally homogeneous models (w.r.t. the so-called abstract reduced modified symbols), see \cite{Syk}, but there are uncountably many locally homogeneous CR structures and so far the classification is incomplete.

\vskip0.15cm\par\noindent
{\it Structure of the paper.} 
In \S\ref{sec:2}  we recall the notions of the Freeman sequence, CR algebra and universal CR algebra, 
and establish some relations of the Freeman filtration with another natural filtration 
on the Lie algebra of infinitesimal CR symmetries.
 % of a CR manifold of hypersurface type that is holomorphically nondegenerate. 
The following \S\ref{sec:3} deals with the interplay of the filtrations introduced in \S\ref{sec:2} and the associated graded Lie algebras. It is the most general and technical part of the paper, and its results might be of independent interest. It finishes with a subsection on $k$-nondegenerate homogeneous models in the sense of \cite{San}. 
In \S\ref{sec:4}-\ref{sec:3.3} we specialize to the $7$-dimensional case and, with a 
careful analysis of all the possibilities for the CR algebra, establish the main results, 
via Theorem \ref{thm:dichotomy-notreally} and global topological considerations. Then in
\S\ref{sec:5} we further describe the maximal symmetric model ${\mathcal R}^7$,
provide a characterization of it in terms of the rational normal curve of degree $3$, 
and relate it to the geometry of 4th order scalar ODEs. 
We also consider some generalizations.

\vskip0.15cm\par\noindent
{\it Notations.} 
For a real vector space $V$ we set $V_\times=\left\{v\in V\,|\,v\neq0\right\}$ and frequently $\wh V=V\otimes \bC$; for a bundle $\mathcal K$ on $\cM$, we denote the space of its sections by $\underline{\mathcal K}$. We decompose any section $X$ of $\widehat\cD=\cD\otimes\bC$ into the sum $X=X_{10}+X_{01}$ of its holomorphic $X_{10}\in\underline\cD_{10}$ and antiholomorphic $X_{01}\in\underline\cD_{01}$ components. 
In \S\ref{sec:4}, we will make contact with the notation in \cite{San} and the symbols $M,N,L$, etc., 
will denote basis elements of Lie algebras as in loc.cit. 

\vskip0.15cm\par\noindent
{\it Acknowledgments.} 
The research leading to these results has received funding from the Norwegian Financial Mechanism 2014-2021 (project registration number 2019/34/H/ST1/00636) and the Troms\o{} Research Foundation (project ``Pure Mathematics in Norway''). The first author was also supported by the UiT Aurora project MASCOT. The second author is supported by a tenure-track RTDB position and 
acknowledges the MIUR Excellence Department Project MatMod@TOV
awarded to the Department of Mathematics, University of Rome Tor Vergata, CUP E83C23000330006.  This article/publication was also supported by the ``National Group for
Algebraic and Geometric Structures, and their Applications'' GNSAGA-INdAM (Italy) and it is
based upon work from COST Action CaLISTA CA21109 supported by COST
(European Cooperation in Science and Technology), 
{\scriptsize\url{https://www.cost.eu}}.
\vskip0.1cm\par\noindent
\section{Freeman filtration and universal CR algebras}\hfill\par\label{sec:2}
\setcounter{section}{2}\setcounter{equation}{0}

\subsection{The filtration of Freeman}\label{sec:2.1}

Let $(\cM, \cD, \cJ)$ be a $2n+1$-dimensional CR manifold of hypersurface type.
The associated {\it  Freeman sequence} \cite{Fr}
is the decreasing filtration of sheaves of complex vector fields 
$\under\cF^{-1}\supset \under\cF^{0}\supset \under\cF^{1} \supset \cdots\supset \under\cF^{p-1}\supset\under\cF^{ p}\supset\under \cF^{ p+1}
\supset\cdots$
%\begin{equation*}
%\label{eq:Freeman-filtration}
%\end{equation*}
iteratively defined by $\under\cF^{-1}=\underline{\widehat\cD}$ and
\begin{equation*}
\begin{array}{l}
\under \cF^{p}=\under \cF^{p}_{10}\oplus \under \cF^{p}_{01}
\;,\quad \text{where} 
\quad  \under{\cF}^{p}_{01}= \overline{\under \cF^{p}_{10}}\quad\text{and}
\\[5pt]
\!\!\under{\cF}^{p}_{10}=\left\{X\in \under{\cF}^{p-1}_{10} \mid [X, \under{\cD}_{01}] = 0^{\vphantom{P^c}}
\!\!\!\!\mod  \under{\cF}^{p-1}_{10} \oplus \under{\cD}_{01} \right\}\,.
\end{array}
\end{equation*}
Each term $\under\cF^p$ of the sequence is a $\cC^{\infty}(\cM)$-module and, for $p\geq 0$,
also a Lie subalgebra of the Lie algebra $\underline{\widehat{T\cM}}$. Clearly $\under\cF^{-1}$ consists of the complexified sections of $\cD$ whereas  $\under{\cF}^{p}_{10}$
for $p\geq 0$ coincides with the left kernel of the higher order Levi form
\begin{equation}
\label{eq:h.o.l.f.}
\begin{aligned}
\clL_{p+1}:&\;\; \under{\cF}^{p-1}_{10}\otimes_{\cC^{\infty}(\cM)} \under{\cD}_{01}\longrightarrow (\under{\cF}^{p-2}_{10}\oplus\under{\cD}_{01})/ (\under{\cF}^{p-1}_{10}\oplus\under{\cD}_{01})\\
&\;(X,Y)\longrightarrow [X,Y]\!\!\!\mod \under{\cF}^{p-1}_{10}\oplus\under{\cD}_{01}\,.
\end{aligned}
\end{equation}
For this definition to work in the case $p=0$, we have to understand $\under{\cF}^{p-2}_{10}\oplus\under{\cD}_{01}$ just as $\underline{\widehat{T\cM}}$.
We note that \eqref{eq:h.o.l.f.} is a tensorial map.

\begin{remark}\hfill
\begin{enumerate}
\item Sometimes it is convenient to extend the domain of definition of \eqref{eq:h.o.l.f.}
allowing for two antiholomorphic entries, that is, to the space of sections $\big(\under{\cF}^{p-1}_{10}\oplus \under{\cD}_{01}\big)\otimes_{\cC^{\infty}(\cM)} \under{\cD}_{01}$. This is the trivial extension, so the resulting map will be denoted by the same symbol.
	\item It is an easy induction to see that $[\under{\cF}^{p-1}_{10}\oplus \under{\cD}_{01},\under{\cF}^0_{01}]\subset \under{\cF}_{10}^{p-1}\oplus \under{\cD}_{01}$
for all $p\geq 0$, so we have in fact a map
$\clL_{p+1}:\big(\under{\cF}^{p-1}_{10}\oplus \under{\cD}_{01}\big)\otimes_{\cC^{\infty}(\cM)} (\under{\cD}_{01}/\under{\cF}_{01}^0)\longrightarrow\under{\cF}^{p-2}_{10}/ \under{\cF}^{p-1}_{10}$.
\end{enumerate}
\end{remark}
From now on, we shall assume that $(\cM, \cD, \cJ)$ is {\it regular}, i.e., the vector fields in $\under{\cF}^{p}_{10}$ are the sections of an associated distribution $\cF^{p}_{10}$, and consider the subbundles 
$\cF^p=\cF^p_{10}\oplus\cF^{p}_{01}$ of $\widehat\cD$. 
The bundle $\cF^p$ is stable under conjugation, so it is the complexification of the underlying
real subbundle $\Re(\cF^p)=\Re(\cF^p_{10}\oplus\cF^{p}_{01})$
 of $\cD$. We will also work in the real-analytic category.

The focus of this first part (I) is on homogeneous CR manifolds and our results automatically
holds in the smooth situation as well, due to the following general facts: If $\cM$ is a
smooth CR manifold that is locally homogeneous under a finite-dimensional Lie algebra $\gg$
of smooth infinitesimal CR automorphisms, then it is regular (because the fibers at different points of a given characteristic bundle correspond each other via local CR automorphisms) and there is a real-analytic atlas on
$\cM$ such that all vector fields in $\gg$ become real-analytic (because any real Lie group admits a unique compatible real-analytic structure, see for instance \cite{Pon}).
%The CR structure is then analytic as well (it is moved from a point with local analytic transformations)
\begin{definition}\cite{BER, Fr}  The CR manifold $(\cM, \cD, \cJ)$ is {\it  $k$-nondegenerate at $x\in \cM$} if   $\cF^p|_x \neq 0$ for all $-1 \leq$ $p$ $\leq k-2$  and $\cF^{k - 1}|_x= 0$.
\end{definition}

\subsection{Locally homogeneous CR manifolds and CR algebras}\label{sec:2.2}

Let $(\cM, \cD, \cJ)$ be a CR manifold of hypersurface type, or a germ of it at a fixed point $x\in \cM$, which is locally homogeneous $k$-nondegenerate,  and $\gg$ the associated Lie algebra of infinitesimal CR automorphisms. 
Transitivity of the action amounts to
 $T_x\cM=\left\{\ev_x(\xi)\mid
\xi\in \gg\right\}$, where $\ev_x:\gg\rightarrow T_x\cM$ is the evaluation map at $x\in \cM$.
(Sometimes we will use the simpler notation $\xi|_x$ instead of $\ev_x(\xi)$. Furthermore, since any bundle $\mathcal K=\cup_{x\in\cM} \mathcal K|_x$ is the union of its fibers over $\cM$, we may write shorter expressions like ``$\ev_x(\xi)\in\cK$'' instead of the logically equivalent ones ``$\ev_x(\xi)\in\cK|_x$'').

We recall that set
$$
\gq=\Big\{\xi\in\wh\gg\mid\ev_x(\xi)\in \cD_{10}\Big\}
$$
and note that
\begin{itemize}
\item[(i)] $\gg$ is a real Lie algebra;
\item[(ii)] $\gq$ is a complex subalgebra of $\wh{\gg}$, thanks to the integrability condition of CR manifolds;
\item[(iii)] the quotient $\gg/\mathfrak{stab}$ is finite-dimensional, where $$\mathfrak{stab}=\gg\cap\gq=\Re(\gq\cap\overline\gq)=\left\{\xi\in\gg\,|\!\!\!\!\!\!\!\!\!\!\!\phantom{C^{C^C}}\ev_x(\xi)=0\right\}\ $$
is the stabilizer subalgebra at $x$.
\end{itemize}
The pair $(\gg,\gq)$ is called an abstract CR algebra, in the terminology of \cite{MeNa3}.

Conversely any abstract CR algebra determines a unique germ of a locally homogeneous CR manifold $(\cM,\cD,\cJ; x)$ with $T_x\cM\cong \gg/\mathfrak{stab}$ and $\cD_{10}|_x\cong\gq/(\gq\cap\bar\gq)$, see \cite{Fel, MeNa3} for more details. The Freeman bundles are locally homogeneous bundles with fiber $\cF^{p}_{10}|_{x}\cong \gq^{p}/(\gq\cap\overline\gq)$, where $\gq^{-1}=\gq\supset \gq^0\supset \cdots\supset \gq^{p-1}\supset \gq^p\supset \gq^{p+1}\supset\cdots
\supset \gq\cap\overline\gq$ is the nested sequence of complex subalgebras 
of $\gq$ iteratively defined by (see \cite{Fel}):
$$
\begin{aligned}
\notag\gq^{p}&=\left\{\xi\in \wh{\gg}\,\mid\!\!\!\!\!\!\!\!\!\!\!\phantom{C^{C^C}} \ev_x(\xi)\in \cF^{p}_{10}\right\}\\
&=\left\{\xi\in\gq^{p-1}\,\mid\!\!\!\!\!\!\!\!\!\!\!\phantom{C^{C^C}}[\xi,\overline\gq]\subset \gq^{p-1}+\overline\gq\right\}\,.
\end{aligned}
$$
\begin{remark}
\hfill
\begin{enumerate}
\item
Parallel to the fact that $\under\cF^p$ is a Lie subalgebra of $\underline{\widehat{T\cM}}$ for all $p\geq 0$, we have that $\Re(\gq^p+\overline\gq^p)=\Big\{\xi\in\gg\mid\ev_x(\xi)\in\Re(\cF^p)\Big\}$ is a Lie subalgebra of $\gg$ for all $p\geq 0$. 
\item
We stress that
the filtration $\{\gq^p\}$ is not respected by the Lie brackets in general. For instance $[\gq^1,\gq^1]$ is not included in $\gq^2$ for the CR algebra $(\gg,\gq)$ of Example \ref{ex:4nondeg} later on. 
\end{enumerate}
\end{remark}
One has $k$-nondegeneracy when $\gq^{k-2}\neq\gq\cap\overline\gq$ and $\gq^{k-1}=\gq\cap\overline\gq$, as it can be readily seen from the following simple but useful lemma.
\begin{lemma}
\label{h.o.l.-with-symmetries}
If $\xi\in\gq^{p-1}+\overline\gq$ and $\eta\in\overline\gq$, then 
$\clL_{p+1}(\xi|_x,\eta|_x)=-[\xi,\eta]|_x\!\!\mod (\cF^{p-1}_{10}\oplus\cD_{01})|_x$.
\end{lemma}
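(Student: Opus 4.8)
The plan is to relate the higher order Levi form $\clL_{p+1}$ evaluated at symmetry vectors to the Lie bracket of the corresponding infinitesimal CR automorphisms. The essential mechanism is that for $\xi,\eta\in\gg$ (or their complexifications in $\wh\gg$), the evaluation map $\ev_x$ intertwines the Lie algebra bracket with the bracket of the corresponding vector fields on $\cM$, and moreover the value at $x$ of the vector field $[\xi,\eta]$ recovers the relevant bracket modulo the stabilizer. First I would recall the definition of $\clL_{p+1}$ from \eqref{eq:h.o.l.f.}: on a pair of local sections $X\in\under{\cF}^{p-1}_{10}\oplus\under\cD_{01}$ (using the trivial extension from the first item of the Remark) and $Y\in\under\cD_{01}$, it is $\clL_{p+1}(X,Y)=[X,Y]\bmod(\under{\cF}^{p-1}_{10}\oplus\under\cD_{01})$, taking values in $(\under{\cF}^{p-2}_{10}\oplus\under\cD_{01})/(\under{\cF}^{p-1}_{10}\oplus\under\cD_{01})$.

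Next I would exploit the tensoriality of $\clL_{p+1}$, which is noted immediately after \eqref{eq:h.o.l.f.}. This is what makes the statement well-posed at the single point $x$: even though $\xi,\eta$ are only elements of $\wh\gg$ and their images $\xi|_x=\ev_x(\xi)$, $\eta|_x=\ev_x(\eta)$ are tangent vectors, tensoriality lets me compute $\clL_{p+1}(\xi|_x,\eta|_x)$ by choosing any local sections of $\under{\cF}^{p-1}_{10}\oplus\under\cD_{01}$ and $\under\cD_{01}$ extending these vectors at $x$, and the infinitesimal automorphisms $\xi,\eta$, viewed as complex vector fields, are the natural such extensions. Under the hypotheses $\xi\in\gq^{p-1}+\overline\gq$ and $\eta\in\overline\gq$, the identifications $\cF^{p}_{10}|_x\cong\gq^p/(\gq\cap\overline\gq)$ and $\cD_{01}|_x\cong\overline\gq/(\gq\cap\overline\gq)$ from \S\ref{sec:2.2} guarantee precisely that $\xi|_x$ lies in $(\cF^{p-1}_{10}\oplus\cD_{01})|_x$ and $\eta|_x$ lies in $\cD_{01}|_x$, so $\xi,\eta$ are legitimate extending sections in the domain of $\clL_{p+1}$.

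I would then substitute into the tensorial formula to get $\clL_{p+1}(\xi|_x,\eta|_x)=[\xi,\eta]|_x\bmod(\cF^{p-1}_{10}\oplus\cD_{01})|_x$, where the bracket on the right is the bracket of the two complex vector fields. The sign discrepancy with the statement of the lemma is the one genuinely nontrivial point: infinitesimal symmetries are typically identified with elements of $\gg$ via the \emph{anti}-homomorphism sending an element to the corresponding fundamental vector field (this is the standard convention under which $\ev_x([\xi,\eta])=-[\,\ev$-extensions$\,]|_x$ modulo stabilizer, reflecting that the Lie bracket of left-invariant data corresponds to minus the bracket of the induced flows). Hence $[\xi,\eta]$ as a bracket in $\wh\gg$ evaluates, as a vector field bracket, to $-[\xi,\eta]|_x$, producing the claimed minus sign. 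I would spell this out by tracking exactly one convention: the convention under which $\gq$ and the $\gq^p$ are defined as subalgebras of $\wh\gg$ via $\ev_x$ mapping into the Freeman bundles.

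The step I expect to be the main obstacle is reconciling this sign bookkeeping cleanly, since it requires making explicit the (often suppressed) anti-homomorphism convention relating abstract Lie algebra brackets in $\wh\gg$ to vector field brackets on $\cM$; everything else is an immediate consequence of tensoriality and the fiberwise identifications of the Freeman bundles recorded in \S\ref{sec:2.2}. Once the convention is pinned down, the identity $\clL_{p+1}(\xi|_x,\eta|_x)=-[\xi,\eta]|_x\bmod(\cF^{p-1}_{10}\oplus\cD_{01})|_x$ follows directly.
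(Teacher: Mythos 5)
Your proposal has a genuine gap at its central step. You propose to compute $\clL_{p+1}(\xi|_x,\eta|_x)$ by using the vector fields $\xi$ and $\eta$ themselves as the extending sections in the tensorial formula \eqref{eq:h.o.l.f.}. But $\xi$ and $\eta$ are \emph{not} sections of $\under{\cF}^{p-1}_{10}\oplus\under\cD_{01}$ and $\under\cD_{01}$: the hypotheses $\xi\in\gq^{p-1}+\overline\gq$ and $\eta\in\overline\gq$ only guarantee that their values \emph{at the single point} $x$ lie in the appropriate fibers, while at nearby points they generically do not even take values in $\cD\otimes\bC$ (in the transitive case $\ev_y(\gg)=T_y\cM$ for all $y$). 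Tensoriality says the value of $\clL_{p+1}$ depends only on the pointwise values of sections \emph{of the relevant bundles}; it does not license plugging in arbitrary vector fields whose value happens to lie in the fiber at one point. So the identity $\clL_{p+1}(\xi|_x,\eta|_x)=[\xi,\eta]|_x\bmod(\cF^{p-1}_{10}\oplus\cD_{01})|_x$ that you derive at this stage is not justified, and the subsequent sign repair is aimed at the wrong target: in this paper $\gg$ is by definition a Lie algebra of vector fields on $\cM$ and $[\xi,\eta]$ is the honest vector-field bracket, so no anti-homomorphism convention for fundamental vector fields enters anywhere.

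The missing idea, and the actual source of the minus sign, is the comparison between the symmetries and genuine sections of the bundles agreeing with them at $x$. Choose $X\in\under{\cF}^{p-1}_{10}$ and $Y,Z\in\under\cD_{01}$ with $X|_x=(\xi|_x)_{10}$, $Y|_x=(\xi|_x)_{01}$, $Z|_x=\eta|_x$. The fields $\xi-X-Y$ and $\eta-Z$ vanish at $x$, hence so does their bracket, giving
\begin{equation*}
0=[\xi,\eta]|_x-[\xi,Z]|_x-[X+Y,\eta]|_x+[X+Y,Z]|_x\;.
\end{equation*}
Because $\xi$ and $\eta$ are CR symmetries they preserve $\under\cD_{01}$ and the Freeman bundles, so $[\xi,Z]\in\under\cD_{01}$ and $[X+Y,\eta]\in\under{\cF}^{p-1}_{10}\oplus\under\cD_{01}$; reducing modulo $(\cF^{p-1}_{10}\oplus\cD_{01})|_x$ yields $\clL_{p+1}(\xi|_x,\eta|_x)=[X+Y,Z]|_x=-[\xi,\eta]|_x$ modulo that subspace. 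This is exactly where tensoriality is legitimately used (on $X+Y$ and $Z$, which \emph{are} sections of the correct bundles), and the sign is forced by the vanishing of $[\xi-X-Y,\eta-Z]$ at $x$, not by any identification convention.
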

\begin{proof}
Let $X\in \under{\cF}^{p-1}_{10}$ and $Y,Z\in \under{\cD}_{01}$ such that $X|_x=(\xi|_x)_{10}$, $Y|_x=(\xi|_x)_{01}$, $Z|_x=\eta|_x$, and compute
$
[\xi-X-Y,\eta-Z]=[\xi,\eta]-[\xi,Z]-[X+Y,\eta]+[X+Y,Z]
$.
Now $[\xi-X-Y,\eta-Z]|_x=0$ since both vector fields vanish at $x$, while $[\xi,Z]\in\under{\cD}_{01}$ and $[X+Y,\eta]\in \under{\cF}^{p-1}_{10}\oplus \under{\cD}_{01}$ since $\xi$ and $\eta$ are CR symmetries.
\end{proof}

\subsection{A filtration on the Lie algebra of infinitesimal CR automorphisms}
\label{subsec:2.3}
Let  $\gg$ be as in \S\ref{sec:2.2}. Following \cite{San}, we now introduce a novel filtration on $\gg$, cf. also \cite{MMN}. It is given by
$$\cdots\supset \gg^{q-1}\supset\gg^{q}\supset\gg^{q+1}\supset\cdots\supset\gg^{-1}\supset \gg^{0}\supset \gg^{1} \supset \cdots\supset \gg^{p-1}\supset\gg^{p}\supset \gg^{p+1}
\supset\cdots$$
with 
\begin{equation}
\label{eq:filtration}
\begin{aligned}
\gg^{-1}&=\left\{\xi\in \gg \mid \ev_x(\xi)\in \cD \right\}\\
\gg^q&=\gg^{q+1}+[\gg^{-1},\gg^{q+1}]\\
\gg^p&=\left\{\xi\in \gg^{p-1} \mid [\xi,\gg^{-1}]\subset \gg^{p-1}\right\}
\end{aligned}
\end{equation}
for all $q\leq -2$, $p\geq 0$. We emphasize that \eqref{eq:filtration}  does {\it not} coincide with the traditional filtration on $\gg$ introduced by Weisfeiler and independently by Morimoto and Tanaka to study transitive Lie algebras of vector fields \cite{Weis, MorTan,Ta1}. In general, their terms of non-negative degree do not coincide with ours, since $\gg^0=\mathfrak{stab}$ for them, while $\gg^0=\left\{\xi\in \gg^{-1} \mid [\xi,\gg^{-1}]\subset \gg^{-1}\right\}$ for us, which is generally bigger.
An analogous filtration $\cdots\supset\wh\gg^{p-1}\supset\wh\gg^p\supset\wh\gg^{p+1}\supset\cdots$ is introduced on $\wh\gg$,
and each term $\wh\gg^p$ coincides in fact with the complexification of $\gg^p$. In particular, it is stable under conjugation. 

We note that 
these filtrations do not use the complex structure $\cJ$ anywhere but only $\cD$, and refer to them
as {\it contact filtrations} of the CR algebra. We also emphasize that transitivity and $k$-nondegeneracy are crucial for the following result to hold. 
\begin{proposition}
\label{prop:basic-properties-filtration}
The filtration \eqref{eq:filtration} satisfies the following basic properties
\begin{itemize}
\item[(i)] $[\gg^p,\gg^q]\subset \gg^{p+q}$ for all $p,q\in\bZ$,	
\item[(ii)] $\gg^p=\left\{\xi\in \gg^{0} \mid [\xi,\gg^{-1}]\subset \gg^{p-1}\right\}=\left\{\xi\in \gg^{-1} \mid [\xi,\gg^{-1}]\subset \gg^{p-1}\right\}$ for all $p\geq 0$,
\item[(iii)] $\gg^{-2}=\gg$,
\item[(iv)] $\mathfrak{stab}\subset\gg^0$,
\end{itemize}
and analogous ones hold for the complexified filtration
on $\wh\gg$. Moreover
\begin{itemize}
\item[(v)] $\wh\gg^p=\gq^p+\overline\gq^p$ for $p=-1,0$,
\item[(vi)] $\wh\gg^p\subset\gq^p+\overline\gq^p$ for all $p\geq 1$.
\end{itemize}
\end{proposition}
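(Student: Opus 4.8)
The plan is to establish the items in the order (ii), (i), then (iii)--(iv), and finally (v)--(vi): item (ii) is the formal engine that makes the non-negative part of (i) and the inclusions (v)--(vi) work, while (iii)--(iv) are of a more geometric nature. I would start with (ii), which is purely formal. Note first that the filtration is decreasing in the range $p\ge-1$ and that $[\gg^p,\gg^{-1}]\subset\gg^{p-1}$ for every $p\ge0$, both directly from \eqref{eq:filtration}. Setting $A^p=\{\xi\in\gg^{-1}\mid[\xi,\gg^{-1}]\subset\gg^{p-1}\}$, one has $A^0=\gg^0$ by definition; and for $p\ge1$, any $\xi\in A^p$ satisfies $[\xi,\gg^{-1}]\subset\gg^{p-1}\subset\gg^{p-2}$, so $\xi\in A^{p-1}=\gg^{p-1}$ by an upward induction, whence $\xi\in\gg^p$. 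This gives the outer equality $\gg^p=A^p$ in (ii); the middle set lies between $\gg^p$ and $A^p$ and hence equals both.

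For (i) I would split according to the sign of the indices. First, $[\gg^p,\gg^{-1}]\subset\gg^{p-1}$ holds for all $p\in\bZ$ (for $p\ge0$ by definition, for $p\le-1$ from $\gg^{p-1}=\gg^p+[\gg^{-1},\gg^p]$). Keeping $p$ arbitrary, a downward induction on $q$ from $q=-1$, using the recursion $\gg^q=\gg^{q+1}+[\gg^{-1},\gg^{q+1}]$ and the Jacobi identity, yields $[\gg^p,\gg^q]\subset\gg^{p+q}$ for all $q\le-1$; by antisymmetry the bracket relation then holds whenever $\min(p,q)\le-1$. The remaining case $p,q\ge0$ is the heart of the matter and where I expect the main obstacle: here $[\xi,\eta]$ need not a priori lie in $\gg^{-1}$, so the recursion is unavailable and one must invoke (ii). To place $[\xi,\eta]$ in $\gg^{p+q}$ it suffices, by (ii), to check $[\xi,\eta]\in\gg^{-1}$ (which follows from $\gg^p,\gg^q\subset\gg^0$ and $[\gg^0,\gg^{-1}]\subset\gg^{-1}$) and $[[\xi,\eta],\gg^{-1}]\subset\gg^{p+q-1}$; the latter, via $[[\xi,\eta],Z]=[\xi,[\eta,Z]]-[\eta,[\xi,Z]]$, reduces to brackets $[\gg^p,\gg^{q-1}]$ and $[\gg^q,\gg^{p-1}]$ of strictly smaller total degree. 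An induction on $p+q$ (with the slices $p=0$ and $q=0$ disposed of first by a separate induction, the base $[\gg^0,\gg^0]\subset\gg^0$ being immediate from the definition of $\gg^0$) then closes (i).

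For (iii), since $\gg^{-1}\subset\gg^{-2}\subset\gg$ with $\gg/\gg^{-1}$ one-dimensional, it is enough to produce $\xi,\eta\in\gg^{-1}$ with $[\xi,\eta]\notin\gg^{-1}$. Lemma \ref{h.o.l.-with-symmetries} with $p=0$ identifies $-[\xi,\eta]|_x\bmod\wh\cD$ with the value $\clL_1(\xi|_x,\eta|_x)$ of the Levi form; $k$-nondegeneracy rules out the identical vanishing of $\clL_1$ (which would make $\cD$ involutive and the Freeman filtration stationary), so $\clL_1|_x\ne0$, and since $\ev_x$ restricted to $\gg^{-1}$ surjects onto $\cD_x$ by transitivity, such a pair exists, giving $\gg^{-2}=\gg$. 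Item (iv) is short: an element $\xi\in\mathfrak{stab}$ vanishes at $x$ and preserves $\cD$, so its isotropy action on $T_x\cM$ preserves $\cD_x$; hence $[\xi,\eta]|_x=\xi\cdot(\eta|_x)\in\cD_x$ for every $\eta\in\gg^{-1}$, i.e. $\mathfrak{stab}\subset\gg^0$.

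Finally I would pass to $\wh\gg$, using that $\wh\gg^p$ is the complexification of $\gg^p$ and that $\gq,\overline\gq$ are subalgebras of $\wh\gg$. For $p=-1$, surjectivity of $\ev_x$ (by transitivity) together with $\ker(\ev_x|_{\wh\gg})=\gq\cap\overline\gq$ gives $\wh\gg^{-1}=\gq+\overline\gq$. For $p=0$, writing $\xi\in\wh\gg^0$ as $\xi=\alpha+\beta$ with $\alpha\in\gq$, $\beta\in\overline\gq$, the identity $[\alpha,\overline\gq]=[\xi,\overline\gq]-[\beta,\overline\gq]\subset\wh\gg^{-1}$ (using $[\beta,\overline\gq]\subset\overline\gq$) shows $\alpha\in\gq^0$ and then $\beta=\xi-\alpha\in\overline\gq^0$, so $\wh\gg^0=\gq^0+\overline\gq^0$. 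The same bookkeeping, organized as an induction on $p\ge1$ with base $p=0$, proves the inclusion (vi): decomposing $\xi\in\wh\gg^p$ as $\alpha+\beta$ via $\wh\gg^{p-1}\subset\gq^{p-1}+\overline\gq^{p-1}$ and using $[\xi,\overline\gq]\subset\wh\gg^{p-1}\subset\gq^{p-1}+\overline\gq^{p-1}$ together with $[\beta,\overline\gq]\subset\overline\gq$, one gets $[\alpha,\overline\gq]\subset\gq^{p-1}+\overline\gq$, i.e. $\alpha\in\gq^p$, and symmetrically $\beta\in\overline\gq^p$. Only an inclusion is to be expected, because the defining condition for $\gq^p$ tests brackets against $\overline\gq$ alone, whereas that for $\wh\gg^p$ tests against all of $\wh\gg^{-1}=\gq+\overline\gq$; thus $\gq^p+\overline\gq^p$ may be strictly larger than $\wh\gg^p$.
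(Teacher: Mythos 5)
Your argument establishes (i)--(iv), the inclusion $\wh\gg^0\subset\gq^0+\overline\gq^0$, and (vi), but item (v) for $p=0$ is only half proved. Decomposing $\xi\in\wh\gg^0$ as $\alpha+\beta$ and upgrading $\alpha$ to $\gq^0$ and $\beta$ to $\overline\gq^0$ (the latter by the conjugate bookkeeping, not merely because $\beta=\xi-\alpha$) gives one inclusion; you then assert equality. The reverse inclusion $\gq^0+\overline\gq^0\subset\wh\gg^0$ is a separate statement --- it asserts that every $\xi\in\gq^0$ satisfies $[\xi,\wh\gg^{-1}]\subset\wh\gg^{-1}$ --- and it is the direction that actually gets used later (it is what makes the evaluation map $\ev_x:\wh\gg^0\to\cF^0|_x$ surjective at the start of the proof of Theorem \ref{thm:dichotomy-notreally}). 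The paper proves it via Lemma \ref{h.o.l.-with-symmetries} with $p=0$: for $\xi\in\gq^0$ and $\eta=\eta'+\eta''\in\wh\gg^{-1}$ with $\eta'\in\gq$, $\eta''\in\overline\gq$, one has $[\xi,\eta]|_x\equiv[\xi,\eta'']|_x\equiv-\clL(\xi|_x,\eta''|_x)=0\mod\wh\cD|_x$, since $\gq^0$ is the left kernel of the Levi form. In the spirit of your algebraic setup one can instead note that $[\gq^0,\gq]\subset\gq$ (as $\gq$ is a subalgebra) and $[\gq^0,\overline\gq]\subset\gq^{-1}+\overline\gq=\wh\gg^{-1}$ (the definition of $\gq^0$), so $[\gq^0,\wh\gg^{-1}]\subset\wh\gg^{-1}$ and hence $\gq^0\subset\wh\gg^0$. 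Either way, a step is missing and must be supplied.

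Apart from this, your route differs from the paper's in a worthwhile way. For (i)--(ii) you spell out the inductions the paper declares ``straightforward'' and omits, and your inductive proof of (vi) is purely algebraic: the paper evaluates at $x$, uses Lemma \ref{h.o.l.-with-symmetries} to show $(\xi|_x)_{10}\in\cF^p_{10}$, and then corrects $\xi$ by an element of $\wh\stab=\gq^p\cap\overline\gq^p$, whereas you work directly with the characterization $\gq^p=\{\xi\in\gq^{p-1}\mid[\xi,\overline\gq]\subset\gq^{p-1}+\overline\gq\}$ and upgrade the two summands of $\xi=\alpha+\beta$ separately. Both are valid; yours avoids the higher order Levi forms entirely, while the paper's version makes the geometric content (membership of the values in the Freeman bundles) explicit. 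Items (iii) and (iv) coincide in substance with the paper's arguments.
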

\begin{proof}
Properties (i)-(ii) follow by a straightforward induction, which we omit. Now, by $k$-nondegeneracy, the Levi form 
$\clL$ is not identically zero and, by transitivity and Lemma \ref{h.o.l.-with-symmetries}, there exist $\xi\in\gq$, 
$\eta\in\overline\gq$ such that $[\xi,\eta]\notin\wh\gg^{-1}$. Since $(\cM, \cD, \cJ; x)$ is of hypersurface type, 
$\wh\gg^{-2}=\wh\gg$, and (iii) holds. Let $\xi\in\stab$. For any $\eta\in\gg^{-1}$ and $X\in\under\cD$ with 
$X|_x=\eta|_x$, we have $[\xi,\eta]|_x=[\xi,X]|_x$, which is in $\cD|_x$ since $\xi$ is a CR symmetry. 
Therefore $\mathfrak{stab}\subset\gg^0$. 

Transitivity gives (v) for $p=-1$. We prove that $\wh\gg^p\subset\gq^p+\overline\gq^p$ for all $p\geq -1$ by induction. Let $\xi\in\wh\gg^p$, $p\geq 0$, and first note that 
$\wh\gg^p\subset\wh\gg^{p-1}\subset
\gq^{p-1}+\overline\gq^{p-1}\subset \gq^{p-1}+\overline\gq$ by the induction hypothesis; 
in particular $\xi|_x=(\xi|_x)_{10}+(\xi|_x)_{01}\in \cF^{p-1}_{10}\oplus \cF^{p-1}_{01}$. 
%and we may choose $X\in \under{\cF}^{p-1}_{10}$ and $Y\in \under{\cF}^{p-1}_{01}$ such that
%$\xi|_x=X|_x+Y|_x$. 
Lemma \ref{h.o.l.-with-symmetries} then says that $[\xi,\eta]|_x\!\mod (\cF^{p-1}_{10}\oplus\cD_{01}  )|_x=
-\clL_{p+1}(\xi|_x,\eta|_x)$ for all $\eta\in\overline\gq$, but
%$\equiv-[X,Z]|_x\!\mod (\cF^{p-1}_{10}\oplus\cD_{01}  )|_x$.
%$Z\in \under{\cD}_{01}$ with $\eta|_x=Z|_x$, we compute
%\vskip0.5cm\par\noindent
%$$
%[\xi-X-Y,\eta-Z]=[\xi,\eta]+[X,Z]-\underbrace{[\xi,Z]}_{\in \under{\cD}_{01}}-\underbrace{[X,\eta]}_{\in \under{\cF}^{p-1}_{10}}-\underbrace{[Y,\eta]}_{\in \under{\cF}^{p-1}_{01}}+\underbrace{[Y,Z]}_{\in\under{\cD}_{01}}\;,
%$$
%\vskip0.2cm\par\noindent
%and evaluating at $x$ yields $[\xi,\eta]|_x\equiv-[X,Z]|_x\!\mod (\cF^{p-1}_{10}\oplus\cD_{01}  )|_x$. 
$[\xi,\eta]\in\wh\gg^{p-1}\subset \gq^{p-1}+\overline\gq^{p-1}$ since $\xi\in\wh\gg^p$, so
$\clL_{p+1}(\xi|_x,\eta|_x)=0$.
%$[X,Z]|_x\in (\cF^{p-1}_{10}\oplus\cD_{01})|_x$
By transitivity, this is equivalent to $(\xi|_x)_{10}\in \cF^{p}_{10}$.
%and therefore $X|_x\in \cF^{p}_{10}$ by transitivity. 
One similarly establishes that 
$(\xi|_x)_{01}\in \cF^{p}_{01}$.
%$Y|_x\in \cF^{p}_{01}$. 
Let now $\xi'\in\gq^p$ and $\xi''\in\overline\gq^p$ such that $\xi'|_x=(\xi|_x)_{10}$ and $\xi''|_x=(\xi|_x)_{01}$. Then 
$\xi-(\xi'+\xi'')$ is an element of the complexified stabilizer subalgebra $\wh\stab=\gq\cap\overline\gq=\gq^p\cap\overline\gq^p$ and the inclusion
$\wh\gg^p\subset\gq^p+\overline\gq^p$ is settled. 

To finish the proof of (v)-(vi), it remains to show that $\gq^0+\overline\gq^0\subset\wh\gg^0$.
Let $\xi\in \gq^0$ and $\eta\in\wh\gg^{-1}$, which we may write as $\eta=\eta'+\eta''$ 
for some $\eta'\in\gq$ and $\eta''\in\overline\gq$. Then
$[\xi,\eta]|_x\!\!\mod \wh\cD|_x=[\xi,\eta'']|_x\!\!\mod \wh\cD|_x=-\clL(\xi|_x,\eta''|_x)=0$,
since $\gq$ is a subalgebra and where we used Lemma \ref{h.o.l.-with-symmetries} with $p=0$. 
%and $X\in\under{\cF}^{0}_{10}$, $Y\in\under{\wh\cD}$ such that $\xi|_x=X|_x$, $\eta|_x=Y|_x$. Then
%\vskip0.5cm\par\noindent
%$$
%[\xi-X,\eta-Y]=[\xi,\eta]-\underbrace{[\xi,Y]}_{\in \under{\wh\cD}}-\underbrace{[X,\eta]}_{\in \under{\cF}^{0}_{10}}+\underbrace{[X,Y]}_{\in \under{\wh\cD}}
%$$
%\vskip0.1cm\par\noindent
%and evaluating at $x$ says $[\xi,\eta]|_x\in \wh\cD$. 
Since $\gq^0\subset \wh\gg^{-1}$, this readily says that $\gq^0\subset\wh\gg^0$, and by conjugation $\gq^0+\overline\gq^0\subset\wh\gg^0$. 
\end{proof}

\begin{remark}\hfill\label{remark:later-on}
\begin{enumerate}
\item Property (v) of Proposition \ref{prop:basic-properties-filtration} does not hold for $p\geq 1$ in general. In fact, if $\wh\gg^p=\gq^p+\overline\gq^p$, then $\wh\gg^p$ includes the stabilizer subalgebra $\wh\stab$, which is typically not the case for $p\geq 1$. 
\item Property (vi) of Proposition \ref{prop:basic-properties-filtration} implies that
\begin{equation}
\label{eq:eval-p}
\wh\gg^p+\wh\stab\subset\gq^p+\overline\gq^p
\end{equation}
for all $p\geq 1$. Requiring the opposite inclusion is equivalent to ask that the evaluation map 
$\ev_x:\wh\gg^p\rightarrow \cF^{p}|_x$ is surjective, but this is in fact not always true. A counterexample in dimension $9$ can be found in Example \ref{ex:4nondeg}.

As a consequence of our classification result in \S\ref{sec:3}-\S\ref{sec:4}, the property $\wh\gg^p+\wh\stab=\gq^p+\overline\gq^p$ holds for the {\it full}  infinitesimal symmetry algebra of all homolorphically nondegenerate locally homogeneous CR manifolds of hypersurface type up to dimension $7$. 
\end{enumerate}
\end{remark}

 \begin{example}\label{ex:4nondeg}
We consider the homogeneous CR manifold $\cM^{k,c}:= \Gamma(a) + iV$ as in \cite[\S 5]{FK2} for $k=4$, $c=1$. It is the tube over the $4$-dimensional 
group orbit $\Gamma(a)\subset V$, where $V=S^4\mathbb R^2$ with the action induced by that of $\Gamma=\GL_2^+(\mathbb R)$ on $\mathbb R^2=\langle u_1,u_2\rangle$, and $a=u_2^4+u_1u_2^3+u_1^2u_2^2$. The CR manifold
$\cM^{4,1}$ is $9$-dimensional, $4$-nondegenerate, and homogeneous for the action of the complex affine group  $G=\Gamma\ltimes iV$. The action is almost effective and simply transitive. 
We consider $x=(a,0)\in \cM^{4,1}$ as base point and let $(\gg,\gq)$ be the associated CR algebra.

Abstractly $\gg=\ggl_2(\mathbb R)\ltimes S^4\mathbb R^2$, where the action  of 
$\ggl_2(\mathbb R)$ on $S^4\mathbb R^2$ is by derivations, that is
\begin{equation*}
\begin{pmatrix}\alpha & \beta\\ \gamma & \delta\end{pmatrix}\mapsto
\begin{pmatrix} 
4\alpha & \beta & 0 & 0 &0 \\
4\gamma & 3\alpha+\delta & 2\beta & 0 & 0\\
0 & 3\gamma & 2\alpha+2\delta & 3\beta & 0\\
0 & 0 & 2\gamma & \alpha+3\delta & 4\beta \\
0 & 0 & 0 & \gamma & 4\delta
\end{pmatrix}
\end{equation*}
w.r.t. the natural basis $\{u_1^4,u_1^3 u_2,u_1^2 u_2^2, u_1 u_2^3,u_2^4\}$ of $S^4\mathbb R^2$ and for any matrix $\begin{psmallmatrix}\alpha & \beta\\ \gamma & \delta \end{psmallmatrix}\in \ggl_2(\mathbb R)$.
The complexification $\wh\gg=\ggl_2(\mathbb C)\ltimes S^4\mathbb C^2$ of $\gg=\ggl_2(\mathbb R)\ltimes S^4\mathbb R^2$ comes with the subalgebra $\mathfrak q$ generated by
\begin{align*}
A&=\begin{pmatrix} 1& 0 \\ 0 & 0\end{pmatrix}-i\big(u_2^3u_1+2u_2^2u_1^2\big)\;,\\
B&=\begin{pmatrix} 0& 1 \\ 0 & 0\end{pmatrix}-i\big(4u_2^3u_1+3u_2^2u_1^2+2u_2u_1^3\big)\;,\\
C&=\begin{pmatrix} 0& 0 \\ 1 & 0\end{pmatrix}-i\big(u_2^4+2u_2^3u_1\big)\;,\\
D&=\begin{pmatrix} 0& 0 \\ 0 & 1\end{pmatrix}-i\big(4u_2^4+3u_2^3u_1+2u_2^2u_1^2\big)\;,
\end{align*}
and with the standard antilinear involution that corresponds to its real form $\gg$.
The coefficients of $-i$ in the above formulae are given by the action of the generators of $\mathfrak{gl}_2(\mathbb R)$ on $a\in S^4\mathbb R^2$, hence the subalgebra $\gq$ is isomorphic to $\ggl_2(\mathbb C)$. 
Its Lie brackets are collected in the following table: 
\par
 \bigskip
\centerline{\rotatebox{0}{\footnotesize
$\begin{array}{|c|c|c|c|c|}\hline %\hline 
{}^{\phantom{A^A}}[-,-]^{\phantom{A^A}} &
A & B& C& D \\
\hline\hline
A & 0 & B & -C & 0
\\
\hline
B & \star & 0 & A-D & B  \\
\hline
C & \star & \star  & 0 & -C \\
\hline
D & \star & \star  & \star & 0 \\
\hline
\end{array}$
}}
\bigskip
\centerline{\small\it %Lie brackets between infinitesimal CR symmetries.
}
\bigskip
\par\noindent
where ``$\star$'' refers to Lie brackets that can be inferred from the others using skew-symmetry. 
We note that
\begin{equation*}
\begin{aligned}
\gq+\overline\gq&=\ggl_2(\mathbb C)\ltimes\langle u_2^4,u_2^3u_1,u_2^2u_1^2,u_2u_1^3\rangle\;,\\
\wh\gg/(\gq+\overline\gq)&\cong\langle u_1^4\rangle\;.
\end{aligned}
\end{equation*}
In particular $\wh\stab=\gq\cap\overline\gq=0$, coherent with the fact that the stabilizer of $x$ in $G$ is  
 $$
\mathbb Z_2\times\mathbb Z_2=\Bigl\{\pm\begin{pmatrix}1 & 0 \\ 0 & 1\end{pmatrix},
\pm\begin{pmatrix} 1 & 0 \\ 1 & -1\end{pmatrix}\Bigr\}\;,
 $$
hence discrete.

The Lie brackets between elements of $\gq$ and $\overline\gq$ are more complicated. They are given by
\begin{align*}
[A,\overline A]&= \tfrac16\overline D - \tfrac 23\overline C + \tfrac{11}{6}\overline A - \tfrac{1}{6}D -\tfrac{11}{6} A + \tfrac{2}{3}C\;,\\
[A, \overline B]&= \tfrac{13}{12}\overline D - \tfrac{13}{3}\overline C + 3\overline B - \tfrac{31}{12}\overline A - \tfrac{13}{12}D + \tfrac{13}{3}C + \tfrac{31}{12}A - 2B\;,\\
[A, \overline C]&= -\tfrac{1}{3}\overline D + \tfrac{4}{3}\overline C + \tfrac{1}{3}\overline A +\tfrac{1}{3} D - \tfrac{1}{3}A - \tfrac{7}{3}C\;,\\
[A, \overline D]& = -\tfrac{1}{6}\overline D + \tfrac23\overline C + \tfrac{13}{6}\overline A + \tfrac16 D - \tfrac{13}{6}A - \tfrac{2}{3}C\;,\\
[B, \overline C]& = -\tfrac{1}{6}\overline D + \tfrac{2}{3}\overline C + \tfrac{19}{6}\overline A - \tfrac{5}{6}D - \tfrac{13}{6}A - \tfrac{2}{3}C\;,\\
[B, \overline D]& = -\tfrac{13}{12}\overline D + \tfrac{13}{3}\overline C + 2\overline B + \tfrac{31}{12}\overline A + \tfrac{13}{12} D - \tfrac{13}{3} C - \tfrac{31}{12} A - B\;,\\
[C, \overline C]& = \tfrac{2}{3}\overline D - \tfrac{2}{3}\overline C - \tfrac{2}{3}\overline A - \tfrac{2}{3} D + \tfrac{2}{3}A + \tfrac{2}{3}C\;,\\
[C, \overline D]& = \tfrac13\overline D + \tfrac{5}{3}\overline C - \tfrac{1}{3}\overline A - \tfrac{1}{3}D + \tfrac{1}{3}A - \tfrac{8}{3}C\;,\\
[D, \overline D]& = \tfrac{25}{6}\overline D - \tfrac{2}{3}\overline C - \tfrac{13}{6}\overline A - \tfrac{25}{6}D + \tfrac{13}{6}A + \tfrac{2}{3}C\;,
\end{align*}
the fact that $[B, \overline B]$ is not an element of $\gq+\overline\gq$ (we omit its explicit expression), and by brackets that can be inferred from the others using skew-symmetry and conjugation.

The terms of the Freeman sequence are given by
\begin{align*}
\gq^{-1}=\gq\supset\gq^0=\langle A,C,D\rangle\supset \gq^1=\langle A-D,C\rangle\supset \gq^2=\langle A-D+C\rangle\supset\gq^3=0\;,
\end{align*}
thus decreasing by one dimension at each step, as expected. We then see that
\begin{align*}
\gq^1+\overline \gq^1&=\Bigl\langle\begin{pmatrix} 1 & 0 \\ 1 & -1 \end{pmatrix}, \begin{pmatrix} 0 & 0 \\ 1 & 0 \end{pmatrix}, u_2^4, u_2^3u_1\Bigr\rangle\;,\\
\gq^2+\overline \gq^2&=\Bigl\langle\begin{pmatrix} 1 & 0 \\ 1 & -1\end{pmatrix}, u_2^4\Bigr\rangle\;.
\end{align*} 

Now $\wh\gg^{-1}=\gq+\overline\gq$, 
$\wh\gg^0=\gq^0+\overline\gq^0=\Bigl\langle \begin{pmatrix} 1 & 0 \\ 0 & 0 \end{pmatrix}, \begin{pmatrix} 0 & 0 \\ 1 & 0 \end{pmatrix}, \begin{pmatrix} 0 & 0 \\ 0 & 1 \end{pmatrix}, u_2^4, u_2^3u_1, u_2^2u_1^2\Bigr\rangle$ 
and $\wh\gg^3=0$ due to Proposition \ref{prop:basic-properties-filtration}. However a direct computation shows that
\begin{align*}
\wh\gg^{1}&=\Bigl\langle\begin{pmatrix} 0 & 0 \\ 1 & 0 \end{pmatrix}, u_2^4, u_2^3u_1\Bigr\rangle\;,\\
\wh\gg^2&=\langle u_2^4\rangle\;,
\end{align*}
and \eqref{eq:eval-p} is a {\it proper} inclusion for $p=1,2$. 
 \end{example}
\smallskip

For any given Lie algebra $\gg$ with a compatible decreasing filtration $\gg^p\subset \gg^q$, $p>q$, it is possible to consider the associated graded Lie algebra $\gr(\gg)=\bigoplus_{q\in\mathbb Z}\gg_q$, where 
$\gg_q=\gg^q/\gg^{q+1}$. By fixing complementary subspaces in the chain of filtrands, which is always possible, $\gg$ can be identified with $\gr(\gg)$ as a  {\it filtered vector space}, i.e., $\gg=\gr(\gg)$ as a vector space and $\gg^q=\oplus_{p\geq q}\gg_p$. Furthermore $\gg$ is a filtered deformation of $\gr(\gg)$, in the sense that the Lie algebra structure of $\gg$ is a variation of that of $\gr(\gg)$ inducing the trivial change at the graded level, see \cite{CK}. More explicitly, the structure equations of $\gg$ are a 
variation of those $[e_i,e_j]=\sum_k c_{ij}^k e_k$ of $\gr(\gg)$ (here $e_i\in \gg_p, e_j\in \gg_q, e_k\in \gg_{p+q}$ are basis elements) by contributions of degree $>p+q$ in the r.h.s. 
If there exists a redefinition of the complementary subspaces in the chain of filtrands of $\gg$
so that such perturbations can be eliminated, then $\gg$ and $\gr(\gg)$ are isomorphic as {\it filtered Lie algebras}. In this case, the filtered deformation 
$\gg$ is called {\it trivial} and we write $\gg\cong \gr(\gg)$.

\begin{remark}
\label{rem:11}
Of course the above point of view can be reversed, taking graded Lie algebras as the starting point and introducing their filtered deformations as variations of their structure equations.
For example, the Heisenberg Lie algebra $\mathfrak{heis}(3)=\gg_1\oplus\gg_2$ with bases $e'_1,e''_1$ of $\gg_1$, $e_2$ of $\gg_2$, and structure equations $[e'_1,e''_1]=e_2$ is a graded Lie algebra that is filtration rigid: there is simply no place for any non-trivial filtered deformation. However, if we change the grading to $\mathfrak{heis}(3)=\gg_{-1}\oplus\gg_{-2}$ with bases $e'_{-1},e''_{-1}$ of $\gg_{-1}$, $e_{-2}$ of $\gg_{-2}$, structure equations $[e'_{-1},e''_{-1}]=e_{-2}$, then 
%non-Abelian $3$-dimensional Lie algebras can be obtained as filtered deformations. For instance, 
the additional relations $[e'_{-1},e_{-2}]=t e'_{-1}$ and  $[e''_{-1},e_{-2}]=-t e''_{-1}$ define the simple Lie algebra $\mathfrak{sl}_2(\mathbb R)$ for any fixed $t\neq 0$. 

Note that the obtained Lie algebra $\mathfrak{sl}_2(\mathbb R)$ becomes in turn graded declaring $e'_{-1}$, $e_{-2}$, $e''_{-1}$ to have degrees $-1$, $0$, and $1$, respectively. This graded Lie algebra is filtration rigid.
\end{remark}
\smallskip

We will introduce in \S\ref{sec:2.5} sufficient conditions for the surjectivity of the map $\ev_x:\wh\gg^p\rightarrow \cF^{p}|_x$ in terms of certain kinds of trivial filtered deformations. This result will be crucial for our main classification in dimension $7$ and, before establishing it, we need to recall the notion of universal CR algebra as introduced in \cite{San}.

\subsection{Universal CR algebra}\label{sec:2.3}

Let $\gc=\bigoplus_{p\geq -2}\gc_{p}$ be the infinite-dimensional contact algebra, i.e, the maximal transitive prolongation, in the sense of Tanaka, of the Heisenberg Lie algebra $\gc_-=\gc_{-2}\oplus\gc_{-1}\cong \mathbb R\oplus\mathbb R^{2n}$ in dimension $n$ \cite{MorTan}. 
There are natural identifications $$\gc_p\cong S^{p+2}(\gc_{-1})\oplus S^{p}(\gc_{-1})\oplus\cdots\;,$$ 
where $\gk_{p}=S^{p+2}(\gc_{-1})$ is the subspace of $\gc_p$ consisting of the elements acting trivially on $\gc_{-2}$. In particular, the zero-degree part is isomorphic to the linear conformal Lie algebra $\gc_0=\gk_0\oplus\bR E$, with $\gk_0\cong \sp(\gc_{-1})$ and $E$ the grading element. Analogous observations are true for $\wh\gc=\gc\otimes\bC$.

Fix  a complex structure $\mathfrak{J}$ on $\gc_{-1}$ such that $\mathfrak{J}\in\gk_0$. 
Clearly $\wh{\gc}_{-1}$  decomposes into the direct sum $\wh{\gc}_{-1}=\gc_{-1(10)}\oplus\gc_{-1(01)}$ of its holomorphic and antihomolorphic parts  and the action of $\mathfrak J$ extends to each symmetric power of $\gc_{-1}$ via the adjoint action. 

For any fixed $p\geq -2$, we denote the  
$\ad(\mathfrak J)$-eigenspace of eigenvalue $i\ell$ in $S^k(\wh\gc_{-1})\subset \wh\gc_p$  by
$$\gc_{(p,\ell)}^{k}=S^{\tfrac{k+\ell}{2}}(\gc_{-1(10)})\otimes S^{\tfrac{k-\ell}{2}}(\gc_{-1(01)})$$
\vskip0.5cm\par\noindent
so that we decompose
$$\displaystyle\wh\gc_p=\bigoplus_{\tiny\begin{gathered}k=p+2,p,p-2,\ldots\\ |\ell|=k,k-2,k-4,\ldots\end{gathered}}\gc_{(p,\ell)}^{k}\;\;\;\;\;\;\text{and}\;\;\;\;\;\;
\displaystyle\wh\gc=\bigoplus_{\tiny\begin{gathered}p=-2,-1,0,\ldots \\ k=p+2,p,p-2,\ldots\\ |\ell|=k,k-2,k-4,\ldots\end{gathered}}\gc_{(p,\ell)}^{k}\;.$$
\vskip0.2cm\par\noindent
The lower indices $(p,\ell)$ in $\gc_{(p,\ell)}^{k}$ define a $\mathbb Z$-bigrading of the contact algebra $\wh\gc$ that is compatible with the Lie algebra structure. 
The upper index $k$ indicates the symmetric power $S^k(\wh\gc_{-1})\subset \wh\gc_p$.

More is true, however, as it follows directly from the explicit expressions of the Lie brackets of $\wh\gc$ as obtained in \cite[Prop. 3.2]{San}:
\begin{proposition}
\label{prop:quasi-grading}
The inclusion $[\gc_{(p_1,\ell_1)}^{k_1},\gc_{(p_2,\ell_2)}^{k_2}]\subset \gc_{(p_1+p_2,\ell_1+\ell_2)}^{k_1+k_2}\oplus \gc_{(p_1+p_2,\ell_1+\ell_2)}^{k_1+k_2-2}$ holds for all indices for which the expression makes sense. (The first component on the r.h.s. is absent if $k_1=p_1+2$, $k_2=p_2+2$,
while the second one is absent if $k_1+k_2\leq 1$.)
\end{proposition}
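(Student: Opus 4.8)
The plan is to read the statement off the explicit bracket formulae of \cite[Prop.\ 3.2]{San} by a weight count on the symmetric power, using the polynomial realization of the contact algebra. First I would fix that realization: writing $V=\wh\gc_{-1}$ with its symplectic form (so that $\gk_0\cong\sp(V)$) and letting $t$ be a formal variable of weight $2$ spanning $\wh\gc_{-2}$, one identifies $\wh\gc$ with the weighted-homogeneous contact Hamiltonians, i.e.\ polynomials in $t$ (weight $2$) and in linear coordinates $v_1,\dots,v_{2n}$ on $V$ (weight $1$). The component $\wh\gc_p$ is then the weighted degree $p+2$ part, and the decomposition $\wh\gc_p=\bigoplus_{j\ge 0}t^{\,j}\,S^{p+2-2j}(V)$ identifies the upper index with the $V$-symmetric power $k=p+2-2j$, equivalently with the $t$-degree $j=\tfrac12(p+2-k)$. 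Since the $(p,\ell)$-bigrading is already known to be bracket-compatible, it suffices to control the upper index $k$, that is, the $t$-degree $j$.

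Next I would record the structural shape of the contact (Jacobi) bracket from \cite[Prop.\ 3.2]{San}: it is the sum of the symplectic Poisson bracket $\{\,\cdot\,,\,\cdot\,\}_V$ on $V$, which carries no $\partial_t$ and contracts one $V$-index from each argument, and of ``contact correction'' terms, each containing exactly one $\partial_t$ acting on a single argument, the remaining operators being multiplications together with the $V$-degree-preserving Euler-type operators $\sum_i v_i\partial_{v_i}$. The two structural facts visible directly in those formulae are that the bracket never multiplies by $t$ and is never second order in $\partial_t$: in the contact Hamiltonian vector field the $\partial_t$-coefficient is free of $\partial_t H$, while the remaining coefficients are at most first order in $\partial_t H$.

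The degree count then runs as follows. The Poisson term lowers the total $V$-degree by $2$, so it maps $S^{k_1}(V)\otimes S^{k_2}(V)$ into $S^{k_1+k_2-2}(V)$ while keeping the $t$-degree equal to $j_1+j_2$; hence it lands in the upper index $k_1+k_2-2$. Each contact term contains exactly one $\partial_t$ and performs no net $V$-differentiation — the single occurring $V$-derivative is compensated by a multiplication, and the Euler-type operators preserve the $V$-degree — so it preserves the total $V$-degree $k_1+k_2$ while lowering the $t$-degree to $j_1+j_2-1$, landing in the upper index $k_1+k_2$. No term can reach upper index $k_1+k_2+2$, since there is no multiplication by $t$, nor $k_1+k_2-4$, since the bracket is first order in each argument and so never second order in $\partial_t$. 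Combined with the established $(p,\ell)$-compatibility this yields $[\gc^{k_1}_{(p_1,\ell_1)},\gc^{k_2}_{(p_2,\ell_2)}]\subset \gc^{k_1+k_2}_{(p_1+p_2,\ell_1+\ell_2)}\oplus\gc^{k_1+k_2-2}_{(p_1+p_2,\ell_1+\ell_2)}$.

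Finally I would dispose of the boundary cases and isolate the only delicate point. If $k_i=p_i+2$ for $i=1,2$ (equivalently $j_1=j_2=0$, purely Hamiltonian elements of $\gk_{p_i}=S^{p_i+2}(V)$), there is no $t$ to differentiate, so all contact terms vanish and only the Poisson part survives; consistently, $S^{k_1+k_2}$ lies outside $\wh\gc_{p_1+p_2}$, whose top symmetric power is $p_1+p_2+2=k_1+k_2-2$, so the first summand is genuinely absent. If $k_1+k_2\le 1$ then $S^{k_1+k_2-2}(V)=0$ and the second summand vanishes. The only step requiring care — the main, though mild, obstacle — is the bookkeeping of the explicit coefficients of \cite[Prop.\ 3.2]{San}: one must check that every summand falls into exactly one of the two categories above, and in particular that the compensating operators $\sum_i v_i\partial_{v_i}$, although not scalar on $S^k(V)$, still preserve $S^k(V)$. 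This is a finite inspection of the formula rather than a genuine difficulty.
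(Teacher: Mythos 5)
Your proposal is correct and follows essentially the same route as the paper, which simply reads the statement off the explicit bracket formulae of \cite[Prop.\ 3.2]{San}, identifying the component in upper index $k_1+k_2$ with the (possibly vanishing) symmetrization and the component in upper index $k_1+k_2-2$ with the contraction. Your weighted contact-Hamiltonian realization, with the bookkeeping in the $t$-degree $j=\tfrac12(p+2-k)$ and the observation that the Lagrange bracket neither multiplies by $t$ nor is second order in $\partial_t$, is just a cleaner packaging of that same inspection, and your treatment of the two boundary cases matches the parenthetical in the statement.
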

The first component of the bracket is proportional to the full symmetrization operation
$S^{k_1}(\wh\gc_{-1})\otimes S^{k_2}(\wh\gc_{-1})\to S^{k_1+k_2}(\wh\gc_{-1})$ but 
the coefficient of proportionality can sometimes vanish (cf. the coefficients ``$\tfrac{p}{2}$'' in equation (3.5) and ``$\alpha(p,i;q,j)$'' in equation 
(3.6) of \cite{San}).
On the other hand, the second component is always a {\it non-zero} multiple of the full symmetrization of the partial contraction
$S^{k_1}(\wh\gc_{-1})\otimes S^{k_2}(\wh\gc_{-1})\to S^{k_1-1}(\wh\gc_{-1})\otimes S^{k_2-1}(\wh\gc_{-1})\to S^{k_1+k_2-2}(\wh\gc_{-1})$ (cf. ``$1$'' in equation (3.5) and ``$\beta(i;j)$'' in equation 
(3.6) of \cite{San}). We refer the reader to the original source for more details and only remark here that $\gc=\gk\oplus\mathfrak{z}$ decomposes into the direct sum of two graded subalgebras $\gk=\bigoplus_{p\geq -2}\gk_{p}$ and $\mathfrak{z}=\bigoplus_{p\geq -1}\mathfrak{z}_{p}$, where $\mathfrak{z}_p\cong S^{p}(\gc_{-1})\oplus S^{p-2}(\gc_{-1})\oplus\cdots$ is the unique $\gk_0$-submodule that is complementary to $\gk_p$ inside $\gc_p$.

\begin{definition}\cite[\S 3]{San}
\label{def:13}
The {\it universal CR algebra} is the pair $(\gc,\gu)$, where 
$\mathfrak{u}=\bigoplus_{p\geq -1}\gu_{p}$ is the $\bZ$-graded subspace of $\wh\gc$ with graded component 
$$\gu_p=\gc_{(p,p+2)}^{p+2}\bigoplus_{\tiny\begin{gathered}|\ell|=p,p-2,p-4,\ldots\end{gathered}}\gc_{(p,\ell)}^{p+2}
\bigoplus_{\tiny\begin{gathered}k=p,p-2,\ldots\\ |\ell|=k,k-2,k-4,\ldots\end{gathered}}\gc_{(p,\ell)}^{k}$$
given by the direct sum of all $\ad(\gJ)$-eigenspaces in $\wh\gc_p$ except the eigenspace of minimal eigenvalue \!$-i(p+2)$, namely, $\gc_{(p,-p-2)}^{p+2}\cong S^{p+2}(\gc_{-1(01)})$. 
\end{definition}

The name ``universal CR algebra'' stems from the fact that $\gu$ is a complex subalgebra of $\wh\gc$. Moreover a pointwise invariant of $k$-nondegenerate CR manifolds of hypersurface type
that has been first introduced in \cite{San} under the name of ``core'' (a generalization of the usual CR-symbol of Levi-nondegenerate CR manifolds) always has a natural injection into $\overline\gu$ \cite[\S 3.1]{San}. In our locally homogeneous context, this reads as the injection $\imath:\overline\gq^p/\overline\gq^{p+1}\longrightarrow\gc_{(p,-p-2)}^{p+2}$ . However, as Example \ref{ex:4nondeg} demonstrates, the image of $\imath$ does not necessarily land in the graded Lie algebra $\gr(\gg)$ associated to the contact filtration, making it less useful for our purposes. A more transparent version of this result is then given in Proposition \ref{prop:crucial} later on.

We note that $\overline\gu_p$ is the sum of all $\ad(\gJ)$-eigenspaces in $\wh\gc_p$ except that $\gc_{(p,p+2)}^{p+2}\cong S^{p+2}(\gc_{-1(10)})$ of maximal eigenvalue, and that the intersection $\gu\cap\overline	\gu$ can be regarded as the formal analogue of the usual complexified stabilizer subalgebra of a finite-dimensional CR algebra.

\section{Infinitesimal CR automorphisms of Levi degenerate CR hypersurfaces}
\setcounter{section}{3}\setcounter{equation}{0}\label{sec:3}

\subsection{From filtrations to gradings}\label{sec:2.4}

Let $(\cM, \cD, \cJ)$ be a CR manifold of hypersurface type, or a germ of it at a fixed point $x\in \cM$, which is locally homogeneous and $k$-nondegenerate,  and let $\gg$ be the associated Lie algebra of infinitesimal CR automorphisms, endowed with the compatible filtration \eqref{eq:filtration}. We let
$$\gr(\gg)=\bigoplus_{p\geq -2}\gg_p\;,\qquad\gg_p=\gg^p/\gg^{p+1}\;,$$ be the associated graded Lie algebra. The analogous construction clearly holds for $\wh\gg=\gg\otimes\bC$.
By Lemma \ref{h.o.l.-with-symmetries} and $(iii)$ and $(v)$ of Proposition \ref{prop:basic-properties-filtration}, we have:
\begin{lemma}
\label{lem:14}
The negatively-graded part $\gg_-=\gg_{-2}\oplus\gg_{-1}\cong \big(T_x\cM/\cD|_x\big)\oplus \big(\cD|_x/\Re(\cF^0)|_x\big)$ of $\gr(\gg)$ is isomorphic to the Heisenberg algebra $\gc_-$. If $v\in\gg_p$, $p\geq 0$, satisfies $[v,\gg_{-1}]=0$ then $v=0$. 
\end{lemma}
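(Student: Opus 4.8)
The plan is to use transitivity to identify the two negative graded pieces with concrete CR data through the evaluation map, then read off the Heisenberg bracket from the Levi form via Lemma \ref{h.o.l.-with-symmetries}, and finally deduce the degree-$\geq 0$ transitivity statement formally from property (ii) of Proposition \ref{prop:basic-properties-filtration}.

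First I would compute the negative pieces. Since $\ev_x:\gg\to T_x\cM$ is onto by transitivity and $\gg^{-1}=\{\xi\mid\ev_x(\xi)\in\cD\}$ by definition in \eqref{eq:filtration}, the map $\ev_x$ descends to an isomorphism $\gg_{-2}=\gg^{-2}/\gg^{-1}=\gg/\gg^{-1}\xrightarrow{\sim}T_x\cM/\cD|_x$, where I use (iii) that $\gg^{-2}=\gg$; one checks likewise that $\gg^q=\gg$ for all $q\leq -2$, so $\gg_q=0$ for $q\leq -3$ and the negative part is genuinely two-step. In particular $\gg_{-2}$ is one-dimensional as $\cM$ is of hypersurface type. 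For $\gg_{-1}=\gg^{-1}/\gg^0$ I would invoke (v): $\wh\gg^0=\gq^0+\overline\gq^0$ is conjugation-stable and is the complexification of $\gg^0$, so together with the description $\Re(\gq^0+\overline\gq^0)=\{\xi\in\gg\mid\ev_x(\xi)\in\Re(\cF^0)\}$ recalled in \S\ref{sec:2.2} we get $\gg^0=\{\xi\mid\ev_x(\xi)\in\Re(\cF^0)|_x\}$. As $\ev_x:\gg^{-1}\to\cD|_x$ is onto with kernel $\stab\subset\gg^0$ (property (iv)) and sends $\gg^0$ onto $\Re(\cF^0)|_x$, this yields $\gg_{-1}\cong\cD|_x/\Re(\cF^0)|_x$.

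Next I would pin down the bracket. The induced bracket $\gg_{-1}\otimes\gg_{-1}\to\gg_{-2}$ is well defined by (i), skew-symmetric, and valued in the one-dimensional $\gg_{-2}$; all remaining negative-degree brackets land in $\gg_{-3}=0$, so $\gg_{-2}$ is automatically central. Applying Lemma \ref{h.o.l.-with-symmetries} with $p=0$ (so $\clL_1=\clL$ and $\cF^{-1}_{10}\oplus\cD_{01}=\wh\cD$) gives $[\xi,\eta]|_x\equiv-\clL(\xi|_x,\eta|_x)\bmod\wh\cD|_x$ for $\xi,\eta\in\wh\gg^{-1}$, which is exactly the statement that the graded bracket equals $-\clL$ under the identifications above. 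Since $\Re(\cF^0)|_x$ is precisely the radical of the (real) Levi form, the induced form on $\gg_{-1}\cong\cD|_x/\Re(\cF^0)|_x$ is nondegenerate and symplectic, so $\gg_-$ is the Heisenberg algebra $\gc_-$ of the corresponding dimension. (Nondegeneracy can also be obtained purely formally from (ii): a class whose bracket with all of $\gg_{-1}$ lies in $\gg^{-1}$ has a representative in $\gg^0$, hence vanishes.)

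For the transitivity statement, let $v\in\gg_p$ with $p\geq 0$ satisfy $[v,\gg_{-1}]=0$, and lift to $\tilde v\in\gg^p\subset\gg^{-1}$. Vanishing of the graded bracket means $[\tilde v,\gg^{-1}]\subset\gg^p$, and property (ii) applied with index $p+1\geq 1$ gives $\gg^{p+1}=\{\xi\in\gg^{-1}\mid[\xi,\gg^{-1}]\subset\gg^p\}\ni\tilde v$, whence $v=0$ in $\gg^p/\gg^{p+1}$. I expect the only genuinely delicate point to be the bookkeeping with (v) and the Remark of \S\ref{sec:2.2} needed to identify $\gg^0$ with $\{\ev_x\in\Re(\cF^0)\}$ exactly, so that the Heisenberg quotient is by the Levi radical; everything else reduces to the filtration axioms (i)--(iii) and the tensorial identity of Lemma \ref{h.o.l.-with-symmetries}.
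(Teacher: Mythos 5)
Your proof is correct and takes essentially the same route as the paper, which deduces this lemma directly from Lemma \ref{h.o.l.-with-symmetries} together with properties (iii) and (v) of Proposition \ref{prop:basic-properties-filtration} — precisely the ingredients you use, supplemented by property (ii) (a restatement of the definition of the filtration) for the second claim. The details you supply (evaluation-map identifications of $\gg_{-2}$ and $\gg_{-1}$, the bracket as $-\clL$ with radical $\Re(\cF^0)$, and the formal nondegeneracy argument) are exactly the intended ones, so there is nothing to add.
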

We emphasize that the above result does in fact use both transitivity and $k$-nondegeneracy, and that this is the case for all the remaining results of \S\ref{sec:3}, unless otherwise explicitly stated. We also recall (see (iii) of Proposition \ref{prop:basic-properties-filtration} in \S\ref{subsec:2.3}) that $\gr(\gg)$ starts with $\gg_{-2}$, due to transitivity, $k$-nondegeneracy and the fact that $(M,\cD,\cJ)$ is of hypersurface type. Furthermore $\mathfrak{stab}=\gg^0$ in the Levi-nondegenerate case (for example, by Lemma \ref{lem:14} with $\cF^0=0$), while the stabilizer is only properly contained in $\gg^0$ in the general $k$-nondegenerate case with $k\geq 2$.

In fact,  the main motivation of \S\ref{sec:3} is two-fold. One objective is to investigate the interplay of the stabilizer with the non-negative part of $\gg$, or, more generally, of the Freeman filtration with the contact filtration, 
thus constraining the non-negative graded components of $\gr(\gg)$. The other is to understand under which conditions the complex structure, or equivalently $\gq$, is fully determined by $\gg$.
\vskip0.3cm

It follows that $\gr(\gg)$ is a graded subalgebra of the contact algebra $\gc$. Moreover $\cJ|_x$ induces a complex structure $\mathfrak{J}$ on $\gc_{-1}$ that satisfies
$[\fJ v,\fJ w]=[v,w]$ for all $v,w\in\gc_{-1}$; this complex structure can be used to construct the universal CR algebra $(\gc,\gu)$ as in \S \ref{sec:2.3}. 

The Lie algebra $\gr(\gg)$ has complexified graded components
$$
\wh\gg_{p}\subset\wh\gc_{p}=\gu_{p}\oplus \gc_{(p,-p-2)}^{p+2}\;,
$$
for all $p\geq -2$. Our aim is to constrain them.
\begin{proposition}
\label{prop:crucial}
 Let $\xi\in\wh\gg^{p}$ for some $p\geq -1$, with the equivalence class $v=\llbracket\xi\rrbracket\in\wh\gg_p$. 
Then $\xi\in \gq^p+\overline\gq^p$ and
\begin{itemize}
	\item[$(i)$] $\xi\in\gq^p+\overline\gq^{p+1}$ if and only if $v\in \gu_p$;
	\item[$(ii)$] $\xi\in\gq^{p+1}+\overline\gq^{p+1}$ if and only if $v\in \gu_p\cap\overline\gu_p$.
\end{itemize}

\end{proposition}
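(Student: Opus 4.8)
The plan is to reduce both parts to a single identification---between the extreme $\ad(\fJ)$-eigenspace component of $v$ and the core invariant $\imath$---and then to obtain (ii) from (i) by conjugation together with an elementary intersection computation.

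First I would fix the decomposition. By (v)--(vi) of Proposition \ref{prop:basic-properties-filtration} we have $\wh\gg^p\subseteq\gq^p+\overline\gq^p$, so $\xi\in\gq^p+\overline\gq^p$ and we may write $\xi=\xi'+\xi''$ with $\xi'\in\gq^p$ and $\xi''\in\overline\gq^p$. Both the Freeman filtration and its conjugate bottom out at $\wh\stab=\gq\cap\overline\gq$, whence $\gq^p\cap\overline\gq^p=\wh\stab$ and $\wh\stab\subseteq\overline\gq^{p+1}$; thus the decomposition is unique modulo $\wh\stab$ and the class $\llbracket\xi''\rrbracket\in\overline\gq^p/\overline\gq^{p+1}$ is well defined. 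One checks directly that $\xi\in\gq^p+\overline\gq^{p+1}$ exactly when $\xi''$ may be taken in $\overline\gq^{p+1}$, i.e. when $\llbracket\xi''\rrbracket=0$. Writing $\pi\colon\wh\gc_p=\gu_p\oplus\gc^{p+2}_{(p,-p-2)}\to\gc^{p+2}_{(p,-p-2)}$ for the projection onto the minimal eigenspace, statement (i) therefore amounts to $\pi(v)=0\iff\llbracket\xi''\rrbracket=0$.

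The crux, and the step I expect to be the main obstacle, is the proportionality
\[
\pi(v)=c\,\imath(\llbracket\xi''\rrbracket)\quad\text{for some }c\in\bC^\times;
\]
its difficulty is precisely that $v$ is read off the contact filtration while $\imath$ is built from the Freeman filtration, and these genuinely differ (cf. Example \ref{ex:4nondeg}). To establish it I would compute $\pi(v)$ intrinsically inside $\gc$. Since $\gc^{p+2}_{(p,-p-2)}=S^{p+2}(\gc_{-1(01)})$ is the top symmetric power in the extreme eigenspace and $\gc_{-1(10)}$ is isotropic for the Levi pairing (because $[\fJ v,\fJ w]=[v,w]$), Proposition \ref{prop:quasi-grading} shows that the iterated bracket $[\cdots[[v,u_1],u_2]\cdots,u_{p+2}]$ against $u_1,\dots,u_{p+2}\in\gc_{-1(10)}$ kills every summand of $v$ lying in $\gu_p$ and, by nondegeneracy of the Levi pairing between $\gc_{-1(10)}$ and $\gc_{-1(01)}$, determines $\pi(v)$ up to the nonzero constant furnished by the contraction (``second'') component of the bracket. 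Lifting the $u_j$ to $\eta_j\in\gq$, the corresponding brackets $[\cdots[[\xi,\eta_1],\eta_2]\cdots,\eta_{p+2}]$ receive no contribution from $\xi'\in\gq^p$ (as $\gq$ is a subalgebra, these stay in $\gq$ and feed only the conjugate eigenspace $\gc^{p+2}_{(p,p+2)}$), so the whole extreme-antiholomorphic part comes from $\xi''$; iterating Lemma \ref{h.o.l.-with-symmetries} rewrites it through the higher order Levi forms $\clL_{p+1},\clL_p,\dots$ applied to $\xi''$, which is exactly the data defining $\imath(\llbracket\xi''\rrbracket)$. Since $\imath$ is injective \cite{San}, the displayed proportionality gives $\pi(v)=0\iff\llbracket\xi''\rrbracket=0$ and settles (i).

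Finally I would deduce (ii). Each $\wh\gg^p$ is conjugation stable, so (i) applies to $\overline\xi\in\wh\gg^p$ and reads $\overline\xi\in\gq^p+\overline\gq^{p+1}\iff\llbracket\overline\xi\rrbracket\in\gu_p$. Conjugating, and using that $\overline\gu_p$---the degree-$p$ part of $\overline\gu$, which omits the maximal eigenspace---is the conjugate of $\gu_p$, this becomes $\xi\in\gq^{p+1}+\overline\gq^p\iff v\in\overline\gu_p$. Hence $v\in\gu_p\cap\overline\gu_p$ if and only if $\xi$ lies in both $\gq^p+\overline\gq^{p+1}$ and $\gq^{p+1}+\overline\gq^p$, and it remains to verify the purely algebraic identity
\[
\bigl(\gq^p+\overline\gq^{p+1}\bigr)\cap\bigl(\gq^{p+1}+\overline\gq^p\bigr)=\gq^{p+1}+\overline\gq^{p+1}.
\]
The inclusion $\supseteq$ is immediate; for $\subseteq$, writing an element as $a+b=c+d$ with $a\in\gq^p$, $b\in\overline\gq^{p+1}$, $c\in\gq^{p+1}$, $d\in\overline\gq^p$ yields $a-c=d-b\in\gq^p\cap\overline\gq^p=\wh\stab\subseteq\gq^{p+1}$, so $a\in\gq^{p+1}$ and the element lies in $\gq^{p+1}+\overline\gq^{p+1}$. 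This proves (ii) and completes the argument.
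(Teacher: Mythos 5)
Your proof is correct, but it routes the key step through the core invariant $\imath$ of \cite{San}, which is exactly what the paper's own argument avoids. The paper proves $(i)$ by induction on $p$: for the implication $v\in \gu_p\Rightarrow \xi\in\gq^p+\overline\gq^{p+1}$ it needs only a \emph{single} bracket --- since $[v,w]\in[\gu_p,\gu_{-1}]\subset\gu_{p-1}$, the inductive hypothesis one degree lower gives $[\xi,\eta]\in\gq^{p-1}+\overline\gq^{p}$, hence $[\xi'',\eta]\in\gq+\overline\gq^{p}$ for every $\eta\in\gq$, which is precisely the defining condition for $\xi''\in\overline\gq^{p+1}$ --- and only the converse uses the iterated $(p+2)$-fold bracket together with the non-vanishing of the contraction component from Proposition \ref{prop:quasi-grading}. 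You instead prove both directions simultaneously via the proportionality $\pi(v)=c\,\imath(\llbracket\xi''\rrbracket)$ and the injectivity of $\imath$. This is legitimate, since $\imath$ and its injectivity are established in \cite{San} independently of the present proposition; and your treatment of $(ii)$, in particular the explicit verification of $\bigl(\gq^p+\overline\gq^{p+1}\bigr)\cap\bigl(\gq^{p+1}+\overline\gq^p\bigr)=\gq^{p+1}+\overline\gq^{p+1}$ using $\gq^p\cap\overline\gq^p=\wh\stab$, usefully spells out what the paper compresses into ``$(i)$ and its conjugate statement.'' The trade-offs: the paper's induction is self-contained, and the authors explicitly present Proposition \ref{prop:crucial} as a ``more transparent version'' of the core-injection statement, so leaning on $\imath$ partly reverses the intended logical order. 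Moreover, the step you yourself identify as the crux --- that iterating Lemma \ref{h.o.l.-with-symmetries} identifies the class of $[\cdots[[\xi'',\eta_1],\ldots,\eta_{p+2}]$ modulo $\wh\gg^{-1}$ with the contraction of $\imath(\llbracket\xi''\rrbracket)$ against $w_1\odot\cdots\odot w_{p+2}$, with a constant independent of the $w_j$ and an output depending only on $\llbracket\xi''\rrbracket\in\overline\gq^p/\overline\gq^{p+1}$ and symmetric in the $\eta_j$ --- is asserted rather than checked; unwinding it essentially reproduces the paper's iterated-bracket computation, so nothing is wrong, but as written the hardest point is outsourced to the precise definition of the core in \cite[\S 3.1]{San}.
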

\begin{proof}
We recall that $\wh\gg^p\subset\gq^p+\overline\gq^p$ always due to \eqref{eq:eval-p}
and set to prove claim $(i)$ by induction.
\vskip0.1cm\par\noindent
\underline{\it Case $p=-1$}. 
First of all $\xi\in\wh\gg^{-1}= \gq+\overline\gq$, so we may write $\xi=\xi'+\xi''$ with 
$\xi'\in\gq$ and $\xi''\in\overline\gq$.
Let then $\eta\in \gq$ with equivalence class $w=\llbracket\eta\rrbracket\in\gu_{-1}=\gc_{(-1,1)}^{1}$   %\wh\gg_{-1}$
 and decompose $v=v'+v''$ according to $
\wh\gg_{-1}=\gu_{-1}\oplus\overline\gu_{-1}=\gc_{(-1,1)}^{1}\oplus \gc_{(-1,-1)}^{1}$.
By construction
$$[\xi'',\eta]\!\!\mod\wh\gg^{-1}=[\xi,\eta]\!\!\mod\wh\gg^{-1}=[v,w]=[v'',w]\;,$$
which vanishes for all $\eta$ if and only if $\xi''\in\overline\gq^0$ or, equivalently, 
$v''=0$. The latter condition just means that $v\in \gu_{-1}$.
\vskip0.1cm\par\noindent
\underline{\it Case $p\geq 0$}. Let $\eta$ and $w$ be as above and note that $\epsilon=[\xi,\eta]\in \wh\gg^{p-1}$. We decompose
$\epsilon=\epsilon'+\epsilon''$ according to $\wh\gg^{p-1}\subset\gq^{p-1}+\overline\gq^{p-1}$
and let $u=\llbracket\epsilon\rrbracket$ be the class of $\epsilon$ in $\wh\gg_{p-1}$.

If $v\in\gu_p$, then $u=[v,w]\in [\gu_{p},\gu_{-1}]\subset\gu_{p-1}$ trivially projects to $\gc_{(p-1,-p-1)}^{p+1}$ and 
$\epsilon\in\gq^{p-1}+\overline\gq^{p}$ by the induction hypothesis.
We write $\xi=\xi'+\xi''$ with 
$\xi'\in\gq^{p}$ and $\xi''\in\overline\gq^{p}$ by Proposition \ref{prop:basic-properties-filtration}, so that 
$[\xi'',\eta]\!\!\mod\gq+\overline\gq^{p}=\epsilon\!\!\mod\gq+\overline\gq^{p}=0$
and $\xi''\in\overline\gq^{p+1}$. This proves one direction.

Conversely, assume $\xi=\xi'+\xi''$ with $\xi'\in\gq^{p}$ and $\xi''\in\overline\gq^{p+1}$. If
$\eta_k\in\gq$ with equivalence class $w_k=\llbracket\eta_k\rrbracket\in\wh\gg_{-1}$, for $k=1,\ldots,p+2$, then
$$[\ldots[[\xi,\eta_1],\eta_2],\ldots,\eta_{p+2}]\in\gq+\overline\gq$$
so its equivalence class $[\ldots[[v,w_1],w_2],\ldots,w_{p+2}]$ in $\wh\gg_{-2}$ vanishes. Since all $w_k\in \gu_{-1}=\gc^{1}_{(-1,1)}$
and $v=v'+v''$ with $v'\in \gu_{p}$ and $v''\in\gc_{(p,-p-2)}^{p+2}$, we finally see that
$$
0=[\cdots[[v,w_1],w_2],\cdots,w_{p+2}]=[\cdots[[v'',w_1],w_2],\cdots,w_{p+2}]
$$
as element of $\wh\gg_{-2}$. By Proposition \ref{prop:quasi-grading}, each bracket  $[\gc^{k}_{(k-2,-k)},\gc^{1}_{(-1,1)}]\subset \gc^{k-1}_{(k-3,-k+1)}$ is a non-zero multiple of full symmetrization of contraction, showing that $v''=0$. 
\vskip0.1cm\par
Claim $(i)$ has been proved. Claim $(ii)$ follows then from $(i)$ and its conjugate statement.
\end{proof}
%By conjugation, one readily sees that the collection of elements of $\wh\gg^p$ with equivalence class 
%projecting trivially to $\gc_{(p,p+2)}^{p+2}\oplus \gc_{(p,-p-2)}^{p+2}$ is $\wh\gg^p\cap(\gq^{p+1}+\overline\gq^{p+1})$. 
\begin{corollary}
\label{cor:10}
Let $\xi\in\gq^q\setminus \gq^{q+1}$ for $q\geq -1$, then $\xi\notin\wh\gg^{q+1}$. If $p\leq q$ is the maximal integer such that $\xi\in\wh\gg^{p}$, then the equivalence class $v=\llbracket\xi\rrbracket\in\wh\gg_p$ is non-trivial and satisfies: 
$$
\begin{cases}
v\in\gu_p\setminus \big(\gu_p\cap\overline\gu_p\big)&\;\;\text{if}\;\;p=q\;,\\
v\in\gu_p\cap \overline\gu_p&\;\;\text{if}\;\;p<q\;.
\end{cases}
$$
\end{corollary}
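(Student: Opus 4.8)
The plan is to deduce everything from Proposition \ref{prop:crucial} together with one elementary fact about how the Freeman filtration meets $\overline\gq$. First I would record the auxiliary observation that $\gq^q\cap\overline\gq=\wh\stab$ and $\wh\stab\subset\gq^{q+1}$ for every $q\geq -1$. Indeed, the Freeman chain $\gq=\gq^{-1}\supset\gq^0\supset\cdots$ terminates at $\gq\cap\overline\gq=\wh\stab$, so $\wh\stab\subset\gq^{p}$ for all $p$; combining this with $\gq^q\subset\gq$ gives $\wh\stab\subset\gq^q\cap\overline\gq\subset\gq\cap\overline\gq=\wh\stab$, hence equality. The consequence I will use repeatedly is the collapse principle: if $\xi\in\gq^q$ and $\xi\in\gq^{q+1}+\overline\gq^{q+1}$, then $\xi\in\gq^{q+1}$. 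To see it, write $\xi=\xi'+\xi''$ with $\xi'\in\gq^{q+1}$ and $\xi''\in\overline\gq^{q+1}$; then $\xi''=\xi-\xi'\in\gq^q\cap\overline\gq=\wh\stab\subset\gq^{q+1}$, so $\xi=\xi'+\xi''\in\gq^{q+1}$.

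Next I would prove the first assertion $\xi\notin\wh\gg^{q+1}$ by contradiction. If $\xi\in\wh\gg^{q+1}$, then $\wh\gg^{q+1}\subset\gq^{q+1}+\overline\gq^{q+1}$ by \eqref{eq:eval-p} (equivalently by parts (v)--(vi) of Proposition \ref{prop:basic-properties-filtration}), so $\xi\in\gq^{q+1}+\overline\gq^{q+1}$; since $\xi\in\gq^q$, the collapse principle forces $\xi\in\gq^{q+1}$, contradicting $\xi\in\gq^q\setminus\gq^{q+1}$. In particular, since $\xi\in\gq\subset\wh\gg^{-1}$ by Proposition \ref{prop:basic-properties-filtration}(v), the set of integers $p$ with $\xi\in\wh\gg^p$ is a nonempty down-set bounded above by $q$, so there is a well-defined maximal $p$ with $-1\leq p\leq q$, and $\xi\notin\wh\gg^{p+1}$ makes the class $v=\llbracket\xi\rrbracket\in\wh\gg_p$ nonzero.

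Finally I would split into the two cases. If $p=q$, then $\xi\in\gq^p$ trivially gives $\xi\in\gq^p+\overline\gq^{p+1}$, so $v\in\gu_p$ by Proposition \ref{prop:crucial}(i); were $v\in\gu_p\cap\overline\gu_p$, Proposition \ref{prop:crucial}(ii) would yield $\xi\in\gq^{p+1}+\overline\gq^{p+1}=\gq^{q+1}+\overline\gq^{q+1}$ and hence $\xi\in\gq^{q+1}$ by the collapse principle, a contradiction; thus $v\in\gu_p\setminus(\gu_p\cap\overline\gu_p)$. If $p<q$, then $p+1\leq q$ and the filtration is decreasing, so $\xi\in\gq^q\subset\gq^{p+1}\subset\gq^{p+1}+\overline\gq^{p+1}$, whence $v\in\gu_p\cap\overline\gu_p$ by Proposition \ref{prop:crucial}(ii).

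The argument is essentially bookkeeping between the Freeman filtration $\{\gq^p\}$ and the contact filtration $\{\wh\gg^p\}$; the only genuine input is Proposition \ref{prop:crucial}, so I do not expect a serious obstacle. The subtlest point to get right is the auxiliary identity $\gq^q\cap\overline\gq=\wh\stab$, since it is precisely what lets a decomposition lying in $\gq^{q+1}+\overline\gq^{q+1}$ collapse into membership in $\gq^{q+1}$; once that is in hand, both cases are immediate invocations of the two equivalences in Proposition \ref{prop:crucial}.
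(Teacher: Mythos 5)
Your proof is correct and follows essentially the same route as the paper: the first claim via $\wh\gg^{q+1}\subset\gq^{q+1}+\overline\gq^{q+1}$ (Proposition \ref{prop:basic-properties-filtration}(v)--(vi)) combined with the observation that membership in $\gq$ then collapses to membership in $\gq^{q+1}$, and the dichotomy via parts (i) and (ii) of Proposition \ref{prop:crucial}. You merely make explicit the bookkeeping ($\gq^q\cap\overline\gq=\wh\stab\subset\gq^{q+1}$) that the paper leaves implicit in the phrase ``Since $\xi\in\gq$, this implies $\xi\in\gq^{q+1}$'' and in ``follows immediately from Proposition \ref{prop:crucial}''.
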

\begin{proof}
If $\xi$ were an element of $\wh\gg^{q+1}$, then $\xi\in\gq^{q+1}+\overline\gq^{q+1}$ thanks to (v)-(vi) of Proposition \ref{prop:basic-properties-filtration}. Since $\xi\in\gq$, this implies $\xi\in\gq^{q+1}$, which is a contradiction. The first claim has been proved.

The second claim follows then immediately from Proposition \ref{prop:crucial}.
\end{proof}
These last two results don't make use of $k$-nondegeneracy. The next subsection deals with the graded case and the stronger results therein rely on $k$-nondegeneracy.

\subsection{Trivial filtered deformations}\label{sec:2.5}

In this section, we assume that the filtered deformation $\gg$ of 
$\gr(\gg)$ is trivial, namely, 
%there exists a choice of complementary subspaces in the chain of filtrands of $\gg$ so that 
that $\gg\cong \gr(\gg)$ as filtered Lie algebras with filtrands $\gg^p=\bigoplus_{j\geq p}\gg_{j}$. 
(We recalled these notions and notations just before Remark \ref{rem:11}; see also \cite{CK} for more details.) 
We emphasize that while the embedding of $\gq$ in $\wh\gg$ and the contact filtration on $\gg$ are canonical, the identification $\gg\cong \gr(\gg)$ is not. We fix one identification once and for all and decompose any element $\xi\in\gg$ as $\xi=\sum_{j\geq -2}\xi_{j}$, with graded components
$\xi_j\in\gg_j$.
\begin{remark}
\label{rem:17}
 The following notions and results also involve the complex structure, i.e., they depend on the whole CR algebra $(\gg,\gq)$. They are motivated by the fact that, while $\gg\cong\gr(\gg)$ is a trivial filtered deformation, thus a subalgebra of the contact algebra $\gc$, the condition $\gq\subset \gu$ is not automatic.
(In the terminology of \cite[\S 4]{MeNa3}, the inclusion $\gg\cong\gr(\gg)\subset\gc$ may not give rise to a morphism of CR algebras.) We note that the condition $\gq\subset \gu$ is equivalent to $\gq\subset \widehat\gg\cap\gu$. In particular, it is not guaranteed that
$(\gg,\gq)$  is a CR subalgebra of the universal CR algebra, as this requires the stronger condition $\gq=\widehat\gg\cap\gu$, see \cite[\S 1]{MeNa3}.
\end{remark}
%\begin{definition}
%An element $\eta\in\gq$ is called {\it nice} if it does not belong to $\gq^0$.
%\end{definition}
We first recall that the component $\eta_{-1}$ of any $\eta=\sum_{j\geq -1}\eta_{j}\in\gq\setminus\gq^0$
is a non-zero element of $\gu_{-1}$, thanks to Corollary \ref{cor:10} and the fact that $\gu_{-1}\cap\overline\gu_{-1}=0$. In general, one cannot expect a similar statement for the higher graded components of $\eta$, as explained in Remark \ref{rem:17} and Example \ref{ex:19} below.

\begin{definition}
\label{def:aligned}
A trivial filtered deformation $\gg$ is called:
\vskip0.1cm\par\noindent
\begin{enumerate}
	\item {\it Semi-aligned} (w.r.t. $\gq$) if there exist elements $\eta_{(i)}\in\mathfrak q\cap\mathfrak u$, $1\leq i\leq d=\dim(\gu_{-1})$, decomposed as
\begin{equation}
\label{eq:components-nicebasis}
\eta_{(i)}=\sum_{j\geq -1}\eta_{(i)j}\;,\;\;\eta_{(i)j}\in\gu_j\;,
\end{equation}
such that the set $\{\eta_{(i),-1}:1\leq i\leq d\}$ is a basis of $\gu_{-1}$;
\vskip0.2cm\par\noindent
%\begin{itemize}
%	\item[$(i)$] the set $\{\eta_{(i)-1}:1\leq i\leq D\}$ is a basis of $\gu_{-1}$,
%	\item[$(ii)$] all components $\eta_{(i)j}\in\gu_j$, in other words, $\eta_{(i)}\in\gu$ for all $1\leq i\leq D$;
%\end{itemize}
%\vskip0.3cm\par\noindent
\item {\it Aligned} (w.r.t. $\gq$) if $\gu_{-1}\subset\gq$.
\end{enumerate}
\end{definition}

We note that an aligned trivial filtered deformation is always semi-aligned, by simply taking a basis of $\gu_{-1}$.

\begin{example}
\label{ex:19}
The graded Lie algebras $\gg$ considered in \cite[Examples 4.4, 4.5]{San} are aligned. An example of a trivial filtered deformation which admits a graded Lie algebra structure that
is not semi-aligned is given by Example \ref{ex:4nondeg}.
In this case, $\gg=\gg_{-2}\oplus\cdots\oplus\gg_{+2}$ as a graded Lie algebra with components
 \begin{align*}
\gg_{-2}&=\langle u_1^4\rangle\;,\quad
\gg_{-1}=\Bigl\langle\begin{pmatrix} 0& 1 \\ 0 & 0\end{pmatrix},u_2u_1^3\Bigr\rangle\;,\\
\gg_{0}&=\Bigl\langle\begin{pmatrix} 1& 0 \\ 0 & 1\end{pmatrix}, \begin{pmatrix} 1& 0 \\ 0 & -1\end{pmatrix}, u_2^2u_1^2\Bigl\rangle\;,\\
\gg_{1}&=\Bigl\langle\begin{pmatrix} 0& 0 \\ 1 & 0\end{pmatrix}, u_2^3u_1\Bigl\rangle\;,\quad
\gg_{2}=\langle u_2^4\rangle\;,
 \end{align*}
and $\gu_{-1}$ is $1$-dimensional and generated by $\begin{pmatrix} 0& 1 \\ 0 & 0\end{pmatrix}-2iu_2u_1^3$. Now, the element
$$B=\begin{pmatrix} 0& 1 \\ 0 & 0\end{pmatrix}-2iu_2u_1^3-3iu_2^2u_1^2
-4iu_2^3u_1\in\widehat\gg_{-1}\oplus\widehat\gg_0\oplus\widehat\gg_1$$ 
is in $\gq\setminus\gq^0$, and its degree $-1$ component is a basis of $\gu_{-1}$. However, it is not difficult to check that its degree $0$ component $-3iu_2^2u_1^2\notin \gu_0$, so $B$ is {\it not} an element of $\gu$ and, ultimately, it cannot be used to satisfy the semi-aligned condition in Definition \ref{def:aligned}. Modifying $B$ with the addition of elements of $\gq^0$ does not resolve the issue, as the leading part of any element of $\gq^0$ is in $\gu_{\geq 0}=\gu_{0}\oplus\gu_1\oplus\gu_2\oplus\cdots$ due to Corollary \ref{cor:10}, and hence the modified degree $0$ component is not in $\gu_0$ in any case. This shows that the graded Lie algebra structure is not semi-aligned.
\end{example}

For (semi-)aligned trivial filtered deformations, Corollary \ref{cor:10} can be extended to the next graded components.

 \begin{lemma}
\label{lemma:semi-aligned}
Assume $\gg$ is a semi-aligned trivial filtered deformation and let $\xi=\sum_{j\geq -1}\xi_{j}\in\wh\gg$. If $\xi\in\gq$ then $\xi_j\in\gu_j$ for all $j\geq -1$. In other words, the inclusion $\gq\subset\gu$ holds.
 \end{lemma}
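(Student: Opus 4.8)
The plan is to prove the equivalent assertion $\gq\subseteq\gu$; since $\gu=\bigoplus_{j\geq-1}\gu_j$ is graded, this at once gives $\xi_j\in\gu_j$ for every $j$. For $\xi=\sum_{j\geq-1}\xi_j\in\wh\gg$ I would decompose each component along $\wh\gg_j\subset\wh\gc_j=\gu_j\oplus\gc^{j+2}_{(j,-j-2)}$ as $\xi_j=u_j+b_j$ with $u_j\in\gu_j$ and $b_j\in\gc^{j+2}_{(j,-j-2)}\cong S^{j+2}(\gc_{-1(01)})$, calling $b_j$ the \emph{extremal part} of $\xi_j$. The goal then reads: every $\xi\in\gq$ has $b_j=0$ for all $j$.

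First I would record two structural inputs. (1) For every $\xi\in\gq$ the degree $-1$ extremal part vanishes, i.e.\ $b_{-1}=0$: this is precisely the remark that $\xi_{-1}\in\gu_{-1}$, which follows from Corollary \ref{cor:10} together with $\gu_{-1}\cap\overline\gu_{-1}=0$. (2) Bracketing with $\gu_{-1}$ acts on extremal parts as a contraction: by Proposition \ref{prop:quasi-grading}, $[\gc^{1}_{(-1,1)},\gc^{p+2}_{(p,-p-2)}]\subseteq\gc^{p+1}_{(p-1,-p-1)}$, the potentially larger term $\gc^{p+3}_{(p-1,-p-1)}$ being out of range, so $[\gu_{-1},\gc^{p+2}_{(p,-p-2)}]$ lands again in the extremal subspace and is a nonzero multiple of the contraction $\gc_{-1(10)}\otimes S^{p+2}(\gc_{-1(01)})\to S^{p+1}(\gc_{-1(01)})$. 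As the Levi form pairs $\gc_{-1(10)}$ and $\gc_{-1(01)}$ nondegenerately, for any nonzero $b_p$ there is $w\in\gu_{-1}$ with $[w,b_p]\neq0$.

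The core is then a descent on the lowest degree carrying a nonzero extremal part. I would prove by induction on $\ell\geq0$ that $b_\ell(\xi)=0$ for all $\xi\in\gq$. Fix $\xi\in\gq$; by the inductive hypothesis and input (1), its extremal parts vanish in all degrees $<\ell$, so $\ell$ is its lowest possible extremal degree. Assume $b_\ell=b_\ell(\xi)\neq0$. Using the semi-aligned elements $\eta_{(i)}\in\gq\cap\gu$, whose leading parts $\eta_{(i),-1}$ form a basis of $\gu_{-1}$, choose a combination $\eta=\sum_i c_i\eta_{(i)}\in\gq\cap\gu$ with $[\eta_{-1},b_\ell]\neq0$ (possible by input (2)), and consider $[\eta,\xi]\in\gq$. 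The crucial point, and the whole reason for tracking the \emph{lowest} extremal degree, is that the extremal part of $[\eta,\xi]$ in degree $\ell-1$ equals exactly $[\eta_{-1},b_\ell]$: any contribution $[\eta_a,\xi_b]$ with $a+b=\ell-1$ either has $a\geq0$, forcing $b<\ell$ and hence $\xi_b\in\gu$, so $[\eta_a,\xi_b]\in[\gu,\gu]\subseteq\gu$ contributes no extremal part, or has $a=-1$, $b=\ell$, yielding $[\eta_{-1},b_\ell]$ as extremal part while $[\eta_{-1},u_\ell]\in\gu$. Thus $[\eta,\xi]\in\gq$ has nonzero extremal part in degree $\ell-1$; if $\ell\geq1$ this contradicts the inductive hypothesis, and if $\ell=0$ it contradicts input (1). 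Hence $b_\ell(\xi)=0$, which closes the induction and gives $\gq\subseteq\gu$.

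The main obstacle is exactly what this scheme circumvents: since the deformation is only semi-aligned, the correctors $\eta_{(i)}$ carry nonzero tails $\eta_{(i),a}\in\gu_a$, $a\geq0$, and a naive descent that brackets $\xi$ down degree by degree would let these tails push extremal parts into \emph{higher} degrees, destroying any monotonicity. The fix is to descend on the lowest extremal degree (rather than the highest): there, only the leading part $\eta_{(i),-1}$ of a corrector can act into the next lower degree, so the tails drop out of the decisive computation and the contraction nondegeneracy of input (2) drives the induction. As a sanity check, the aligned case $\gu_{-1}\subseteq\gq$ is the special instance in which the $\eta_{(i)}$ may be taken to be pure degree $-1$ elements, and the descent collapses to the evident one.
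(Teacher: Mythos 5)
Your proof is correct and follows essentially the same route as the paper's: both hinge on bracketing with the semi-aligned elements $\eta_{(i)}$, on the observation that their positive-degree tails and the $\gu$-components of $\xi$ contribute nothing to the extremal part (since $\gu$ is a graded subalgebra), and on the nondegeneracy of the contraction from Proposition \ref{prop:quasi-grading}. The only difference is packaging: the paper takes the minimal degree $q$ with $\xi_q\notin\gu_q$ and applies one iterated $(q+2)$-fold bracket, anchoring at the vanishing of the degree $-2$ component of an element of $\gq\subset\wh\gg^{-1}$, whereas you perform a single bracket per step and induct on the extremal degree, anchoring at Corollary \ref{cor:10} in degree $-1$.
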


 \begin{proof}
By $k$-nondegeneracy, the dimension of $\gg$ is finite, hence there exists $\hslash\geq -1$ such that $\wh\gg_p=0$ for all $p\geq \hslash+1$. We write 
$\xi=\sum_{\mu\leq j\leq \nu}\xi_{j}\in\gq$ with $-1\leq \mu\leq \nu\leq\hslash$, $\xi_\mu\neq 0$, $\xi_\nu\neq 0$, and assume, by contradiction, that $\xi_q\notin\gu_q$ for some minimal integer $\mu\leq q\leq\nu$. 

We decompose $\xi=\xi_{< q}+\xi_q+\xi_{>q}$, where $\xi_{<q}\in\gu$, and consider the iterated $(q+2)$-brackets 
$
\epsilon=[\ldots[[\xi,\eta],\eta],\ldots,\eta]
$,
where $\eta$ denotes, at each step, any of the elements $\eta_{(1)},\ldots,\eta_{(d)}$ as in part (1) of Definition \ref{def:aligned} (repetitions are allowed). First of all $\epsilon\in\gq\subset\wh\gg^{-1}$, as all $\eta$'s are in $\gq$. We then compute
\begin{align*}
\epsilon&=[\ldots[[\xi_{<q},\eta],\eta],\ldots,\eta]+[\ldots[[\xi_q,\eta],\eta],\ldots,\eta]+[\ldots[[\xi_{>q},\eta],\eta],\ldots,\eta]\\
&\equiv[\ldots[[\xi_q,\eta],\eta],\ldots,\eta]+[\ldots[[\xi_{>q},\eta],\eta],\ldots,\eta]\!\!\mod\gu\\
&\equiv[\ldots[[\xi_q,\eta_{-1}],\eta_{-1}],\ldots,\eta_{-1}]\!\!\mod\wh\gg^{-1}\;,
\end{align*}
which is an element of $\mathbb Z$-degree $-2$ and therefore has to vanish. Using Proposition \ref{prop:quasi-grading} as at the end of the proof of Proposition \ref{prop:crucial}, we infer $\xi_q\in\gu_q$, a contradiction.
 \end{proof}

 \begin{proposition}
\label{lem:trivialfil1}
Assume $\gg$ is an aligned trivial filtered deformation and let $\xi=\sum_{j\geq -1}\xi_{j}\in\wh\gg$. Then $\xi\in\gq$ if and only if $\xi_j\in\gu_j$ for all $j\geq -1$. 
%has a trivial projection to $\gc_{(p,-p-2)}^{p+2}$.
 \end{proposition}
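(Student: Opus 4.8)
The statement has two implications, and the forward one (``only if'') is immediate: an aligned trivial filtered deformation is in particular semi-aligned, so Lemma \ref{lemma:semi-aligned} gives $\gq\subseteq\gu$, and hence $\xi\in\gq$ forces $\xi_j\in\gu_j$ for every $j$. The whole content is therefore the reverse inclusion $\wh\gg\cap\gu\subseteq\gq$. Writing $\cW:=\wh\gg\cap\gu$ and $\overline{\cW}:=\wh\gg\cap\overline\gu=\overline{\cW}$, note that $\cW$ is a \emph{graded} subalgebra of $\wh\gc$ (being an intersection of graded subalgebras) which contains $\gq$ by Lemma \ref{lemma:semi-aligned}; the plan is to upgrade this to $\gq=\cW$. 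First I would record two consequences of alignment that I use later: from $\gu_{-1}\subseteq\gq\subseteq\wh\gg$ one gets $\gu_{-1}\subseteq\wh\gg_{-1}$, and combining this with its conjugate and $\gu_{-1}\oplus\overline\gu_{-1}=\wh\gc_{-1}$ forces $\wh\gg_{-1}=\wh\gc_{-1}=\gu_{-1}\oplus\overline\gu_{-1}$.

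Next I would reduce $\gq=\cW$ to a single identity about the stabilizer. Since $\gu\subseteq\wh\gc^{-1}$ and $\wh\gg$ is graded, one has $\cW,\overline{\cW}\subseteq\wh\gg^{-1}$; together with $\gq\subseteq\cW$, $\overline\gq\subseteq\overline{\cW}$ and $\wh\gg^{-1}=\gq+\overline\gq$ (Proposition \ref{prop:basic-properties-filtration}(v)) this yields
\[
\wh\gg^{-1}=\gq+\overline\gq\subseteq\cW+\overline{\cW}\subseteq\wh\gg^{-1},
\]
so that $\cW+\overline{\cW}=\wh\gg^{-1}$. Taking complex dimensions in the two decompositions $\wh\gg^{-1}=\gq+\overline\gq=\cW+\overline{\cW}$, and using that conjugation preserves dimension (so $\dim\cW=\dim\overline{\cW}$ and $\dim\gq=\dim\overline\gq$), gives
\[
2\dim\gq-\dim\wh\stab=\dim\wh\gg^{-1}=2\dim\cW-\dim(\cW\cap\overline{\cW}).
\]
As $\gq\subseteq\cW$ and $\wh\stab=\gq\cap\overline\gq\subseteq\cW\cap\overline{\cW}$, the desired equality $\gq=\cW$ becomes \emph{equivalent} to the single inclusion
\[
\cW\cap\overline{\cW}=\wh\gg\cap\gu\cap\overline\gu\subseteq\wh\stab. \tag{$\ast$}
\]

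Finally I would prove $(\ast)$ by induction on the contact degree. The subspace $\wh\gg\cap\gu\cap\overline\gu$ is graded, so it is enough to show that each homogeneous piece $W_p:=\wh\gg_p\cap\gu_p\cap\overline\gu_p$ lies in $\wh\stab$; the induction is anchored at $p=-1$, where $W_{-1}\subseteq\gu_{-1}\cap\overline\gu_{-1}=0$. For the step, given $w\in W_p$ one applies Proposition \ref{prop:crucial}(ii) to $w\in\wh\gg^p$ with leading class $w\in\gu_p\cap\overline\gu_p$, obtaining $w=a+\overline c$ with $a,c\in\gq^{p+1}$; decomposing into homogeneous components and using $\gq\subseteq\gu$, each off-diagonal component $a_j=-\overline c_j$ ($j\neq p$) lies again in $W_j$, and the lower ones are absorbed into $\wh\stab$ by the inductive hypothesis. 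The genuine difficulty — and the step I expect to be the main obstacle — is that the Freeman filtrand $\gq^{p+1}$ is \emph{not} graded (cf. Corollary \ref{cor:10}), so $a$ simultaneously carries components in degrees above and below $p$, and a plain induction on $p$ cannot control both tails at once. To close this I would exploit the transitivity/nondegeneracy encoded in Lemma \ref{lem:14} — an element of non-negative degree whose bracket with $\wh\gg_{-1}$ vanishes must itself vanish — in combination with the brackets of the surviving homogeneous obstruction against $\gu_{-1}\subseteq\gq$ and $\overline\gu_{-1}\subseteq\overline\gq$: via the splitting $\wh\gg_{-1}=\gu_{-1}\oplus\overline\gu_{-1}$ these descend the higher-degree tails to strictly lower degrees, where the inductive hypothesis and Corollary \ref{cor:10} place them in $\wh\stab$. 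Once $(\ast)$ is in hand, the dimension count of the second paragraph gives $\gq=\cW$, which is exactly the ``if'' direction.
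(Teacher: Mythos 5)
Your ``only if'' direction and the reduction of the converse to the single inclusion $\wh\gg\cap\gu\cap\overline\gu\subseteq\wh\stab$ are correct: the identity $\cW+\overline{\cW}=\gq+\overline\gq=\wh\gg^{-1}$ together with the dimension count does make $\gq=\cW$ equivalent to $(\ast)$. This is a genuinely different organization from the paper, which proves the proposition directly and only afterwards derives $\wh\stab=\wh\gg\cap\gu\cap\overline\gu$ (Proposition \ref{prop:stab-and-freeman}) as a corollary. But the proof of $(\ast)$ is where your argument has a real gap, and you have flagged it yourself without closing it. In the inductive step, from $w\in W_p$ you get $w=a+\overline c$ with $a,c\in\gq^{p+1}$, and it is true that the off-diagonal components $a_j=-\overline{c_j}$ lie in $W_j$; but the components with $j>p$ are not reachable by an induction that ascends in $p$, and even if all off-diagonal components were known to lie in $\wh\stab$, you would only have reduced $w\in W_p$ to $a_p\in\wh\gg_p\cap\gq^{p+1}$, which by Corollary \ref{cor:10} merely returns you to $a_p\in\gu_p\cap\overline\gu_p$, i.e.\ to the statement $W_p\subseteq\wh\stab$ you are trying to prove. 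The proposed fix --- bracketing against $\gu_{-1}\subseteq\gq$ and $\overline\gu_{-1}\subseteq\overline\gq$ to ``descend the tails'' --- does not obviously work either: $[\gu_p\cap\overline\gu_p,\gu_{-1}]$ can hit the excluded eigenspace $\gc^{p+1}_{(p-1,p+1)}$ of $\overline\gu_{p-1}$, so bracketing with $\gg_{-1}$ does not keep you inside $\wh\gg\cap\gu\cap\overline\gu$, and Lemma \ref{lem:14} alone gives no handle on where the resulting element lands relative to the Freeman filtration.

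The mechanism your sketch is missing is the one the paper's proof is built on. The paper inducts on the \emph{top} contact degree $\nu$ of $\xi$, reduces to the single homogeneous component $\xi_{N+1}\in\gq^{N+1}+\overline\gq^{N+2}$, and then forms iterated brackets $[\cdots[[\xi_{N+1},\eta_{(1)}],\eta_{(2)}],\cdots,\eta_{(n)}]$ with arbitrary $\eta_{(i)}\in\gq$. Each such bracket stays in $\wh\gg\cap\gu$, its components of degree $\leq N$ are in $\gq$ by the induction hypothesis and those of degree $\geq N+1$ are in $\gq^{N+1}+\overline\gq^{N+2}$ by Proposition \ref{prop:crucial}; the recursive definition of the Freeman sequence then upgrades the antiholomorphic defect one step per bracket, giving $\xi_{N+1}\in\gq^{N+1}+\overline\gq^{N+2+n}$ for every $n$, and $k$-nondegeneracy collapses $\overline\gq^{N+2+n}$ to $\wh\stab\subseteq\gq$ for $n$ large. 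It is this ``push the defect arbitrarily deep into the Freeman filtration and let $k$-nondegeneracy kill it'' step that does the work, and nothing in your outline plays that role. Your reduction to $(\ast)$ could be kept, but you would still need essentially this iterated-bracket argument (or an equivalent one) to prove $(\ast)$; as written, the proposal is not a complete proof.
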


 \begin{proof}
One direction follows from Lemma 
\ref{lemma:semi-aligned}, we now set to prove the converse direction: we have that $\xi\in\gq$, provided all $\xi_j\in\gu_j$.
We write $\xi=\sum_{\mu\leq j\leq \nu}\xi_{j}$ as in the proof of Lemma 
\ref{lemma:semi-aligned}
and note that $\xi\in\wh\gg^{\mu}\subset\gq^{\mu}+\overline\gq^{\mu}$ due to Proposition \ref{prop:basic-properties-filtration}. 
We work by induction on $\nu\geq -1$ and we will use that $\gu_{-1}=\gq\cap\wh\gg_{-1}$, since $\gg$ is aligned.
%If $\xi\in\gq$, then $\xi\in\wh\gg^\mu\cap\gq=\gq^{\mu}$ and $\xi_\mu\in\gu_\mu$ 
%by (i) of Proposition \ref{prop:crucial}.
%If $\mu=\nu$, we are done, otherwise $\xi_{\mu}\in\wh\gg_{\mu}\subset\wh\gg$ and therefore $\xi_{\mu}\in\gq$ by the induction hypothesis. Since $\xi-\xi_{\mu}\in\wh\gg^{\mu+1}\cap\gq$, we may iterate the argument and each $\xi_p\in\gu_p$, proving
%one implication. 

If $\nu=-1$, then $\xi=\xi_{-1}\in\gu_{-1}\subset \gq$; this is the base of our induction. We assume the claim holds for all $\nu\leq N$ for some given $N\geq -1$ and set to prove the claim for $\nu=N+1$.
Let $\xi=\sum_{\mu\leq j\leq N+1}\xi_{j}$ with all $\xi_j\in\gu_{j}$
and note that the difference $\xi-\xi_{N+1}$ is in $\gq$ by the induction hypothesis. Moreover $\xi_{N+1}\in\gq^{N+1}+\overline\gq^{N+2}$ by (i) of Proposition \ref{prop:crucial}. Take an arbitrary number of elements $\eta_{(1)},\ldots,\eta_{(n)}$ in $\gq$, each of which decomposes as
$$
\eta_{(i)}=\sum_{-1\leq j\leq\hslash}\eta_{(i)j}\;,
$$
with $\eta_{(i)j}\in\gu_j$ by Lemma \ref{lemma:semi-aligned}. We consider the iterated bracket 
$[\ldots[[\xi_{N+1},\eta_{(1)}],\eta_{(2)}],\ldots,\eta_{(n)}]$, which is an element of $\wh\gg\cap\gu$. Its graded components of degree $\leq N$ are in $\gq$ by the induction hypothesis, while those of degree $\geq N+1$ are in $\gq^{N+1}+\overline\gq^{N+2}$
by Proposition \ref{prop:crucial}. In summary 
$$[\cdots[[\xi_{N+1},\eta_{(1)}],\eta_{(2)}],\cdots,\eta_{(n)}]\in\gq+\overline\gq^{N+2}$$ 
and it is straightforward to check that this is equivalent to say that $\xi_{N+1}\in\gq^{N+1}+\overline\gq^{N+2+n}$. By $k$-nondegeneracy $\overline\gq^{N+2+n}=\gq\cap\overline\gq$ for $n$ sufficiently large, so $\xi_{N+1}\in\gq$ and $\xi\in\gq$ too.
\end{proof}
This result can be reformulated in a more suggestive way: if the filtered algebra structure is aligned, then the complex structure is completely determined:
\begin{corollary}
\label{cor:stab!}
If $\gg$ is an aligned trivial filtered deformation, then $\gq=\wh\gg\cap\gu$ and
$$\wh\gg_p\cap(\gq^{p+1}+\overline\gq^{p+1})=\wh\gg_p\cap\wh\stab$$ for all $p\geq -1$.
\end{corollary}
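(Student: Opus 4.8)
The plan is to read off both assertions directly from Proposition~\ref{lem:trivialfil1} together with Proposition~\ref{prop:crucial}, exploiting that the trivial filtered deformation identifies $\wh\gg$ with its associated graded $\gr(\wh\gg)\subset\wh\gc$, so that each $\wh\gg_p$ is an honest homogeneous subspace of $\wh\gg$ lying inside $\wh\gc_p=\gu_p\oplus\gc_{(p,-p-2)}^{p+2}$, while $\gu=\bigoplus_{p\geq -1}\gu_p$ is the graded subspace of $\wh\gc$ collecting the components $\gu_p$.

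First I would settle $\gq=\wh\gg\cap\gu$. Since $\wh\gg^{-1}=\gq+\overline\gq$ by Proposition~\ref{prop:basic-properties-filtration}(v), any $\xi\in\gq$ lies in $\wh\gg^{-1}$ and thus has vanishing degree $-2$ component, so $\xi=\sum_{j\geq -1}\xi_j$ with $\xi_j\in\wh\gg_j$. Proposition~\ref{lem:trivialfil1} says precisely that $\xi\in\gq$ is equivalent to $\xi_j\in\gu_j$ for all $j\geq -1$, i.e.\ to $\xi\in\bigoplus_{j\geq -1}\gu_j=\gu$; since the graded decomposition of $\wh\gc$ is direct, this is the same as $\xi\in\wh\gg\cap\gu$, and both inclusions follow.

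For the equality of intersections, I would fix $p\geq -1$ and take a homogeneous $v\in\wh\gg_p$. Putting $\xi=v$ in Proposition~\ref{prop:crucial}(ii), so that $\llbracket\xi\rrbracket=v$, gives $v\in\gq^{p+1}+\overline\gq^{p+1}$ if and only if $v\in\gu_p\cap\overline\gu_p$. Independently, Proposition~\ref{lem:trivialfil1} applied to the homogeneous $v$ yields $v\in\gq\iff v\in\gu_p$, and applied to the conjugate $\overline v$ — which still lies in $\wh\gg$, as $\wh\gg$ is conjugation-stable, and is still homogeneous of degree $p$ — yields, after conjugating the conclusion, $v\in\overline\gq\iff v\in\overline\gu_p$. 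Hence $v\in\wh\stab=\gq\cap\overline\gq$ exactly when $v\in\gu_p\cap\overline\gu_p$, and comparison with the first equivalence gives $\wh\gg_p\cap(\gq^{p+1}+\overline\gq^{p+1})=\wh\gg_p\cap\wh\stab$.

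The computation is essentially bookkeeping, so the one point I would treat as the crux is the legitimacy of passing to conjugates in the last step: alignment is assumed only with respect to $\gq$, hence Proposition~\ref{lem:trivialfil1} cannot be cited for $\overline\gq$ directly. The resolution is to apply it to $\overline v$ and then conjugate, which is valid precisely because conjugation preserves $\wh\gg$ and each graded piece $\wh\gg_p$; I would record this observation explicitly before invoking the conjugate equivalence. As a sanity check, for $p=-1$ one has $\gu_{-1}\cap\overline\gu_{-1}=0$, so both sides reduce to $0$, consistent with $\wh\stab\cap\wh\gg_{-1}=0$.
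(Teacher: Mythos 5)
Your argument is correct and is essentially the paper's own proof: the first claim is read off as a reformulation of Proposition~\ref{lem:trivialfil1}, and the second follows by combining Proposition~\ref{prop:crucial}(ii) with Proposition~\ref{lem:trivialfil1} (applied also to conjugates to land in $\wh\stab=\gq\cap\overline\gq$). Your explicit remark on why the conjugate of Proposition~\ref{lem:trivialfil1} is legitimate is a point the paper leaves implicit, but it does not change the substance of the argument.
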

\begin{proof}
The first claim is just a reformulation of Proposition \ref{lem:trivialfil1}.
For the second claim, we start with the obvious inclusion
$\wh\gg_p\cap\wh\stab\subset \wh\gg_p\cap(\gq^{p+1}+\overline\gq^{p+1})$.
On the other hand, if $\xi\in\wh\gg_p\cap(\gq^{p+1}+\overline\gq^{p+1})$, then $\xi\in\gu_p\cap\overline\gu_p$ 
%has a trivial projection to
%$\gc_{(p,p+2)}^{p+2}\oplus\gc_{(p,-p-2)}^{p+2}$ by Proposition \ref{prop:crucial}
by $(ii)$ of Proposition \ref{prop:crucial}
and therefore $\xi\in\wh\stab=\gq\cap\overline\gq$ by Proposition \ref{lem:trivialfil1}. This proves the opposite inclusion.
\end{proof}
Summing up, we are now ready to prove the following. (See also Remark \ref{remark:later-on}.)
\begin{proposition}
\label{prop:stab-and-freeman}
If $\gg$ is an aligned trivial filtered deformation, then the complexified stabilizer subalgebra $\wh\stab=\wh\gg\cap\gu\cap\overline\gu$ is $\mathbb Z$-graded and 
$$\gq^p+\overline\gq^p=\wh\gg^p+\wh\stab$$ for all $p\geq -1$.
\end{proposition}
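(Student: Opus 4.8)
The plan is to reduce everything to the two structural facts that are available precisely because $\gg$ is an \emph{aligned} trivial filtered deformation: the identification $\gq=\wh\gg\cap\gu$ from Corollary~\ref{cor:stab!}, and the pointwise criterion of Proposition~\ref{lem:trivialfil1}, which characterizes membership in $\gq$ purely through the graded components. Together with the general inclusions of Proposition~\ref{prop:basic-properties-filtration} and Proposition~\ref{prop:crucial}, these should yield both assertions with no new computation.

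First I would settle the description and gradedness of $\wh\stab$. Conjugating $\gq=\wh\gg\cap\gu$ and using that $\wh\gg$ is conjugation-stable gives $\overline\gq=\wh\gg\cap\overline\gu$, hence $\wh\stab=\gq\cap\overline\gq=\wh\gg\cap\gu\cap\overline\gu$. For gradedness, take $\xi=\sum_{j\geq -1}\xi_j\in\wh\stab$ with $\xi_j\in\wh\gg_j$. Since $\xi\in\gq$, Proposition~\ref{lem:trivialfil1} gives $\xi_j\in\gu_j$; since $\xi\in\overline\gq$, its conjugate gives $\xi_j\in\overline\gu_j$. A second application of Proposition~\ref{lem:trivialfil1} to the single-degree elements $\xi_j\in\gu_j\cap\overline\gu_j$ then places $\xi_j\in\gq\cap\overline\gq=\wh\stab$, so $\wh\stab=\bigoplus_j(\wh\gg_j\cap\wh\stab)$ is $\mathbb Z$-graded.

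For the identity $\gq^p+\overline\gq^p=\wh\gg^p+\wh\stab$, the inclusion $\supseteq$ is immediate: \eqref{eq:eval-p} for $p\geq 1$ (and part (v) of Proposition~\ref{prop:basic-properties-filtration} for $p=-1,0$) gives $\wh\gg^p\subset\gq^p+\overline\gq^p$, while $\wh\stab=\gq\cap\overline\gq\subset\gq^p$ because the Freeman filtration descends to $\gq\cap\overline\gq$. For the reverse inclusion it suffices, by conjugation (both $\wh\gg^p$ and $\wh\stab$ being conjugation-stable), to prove $\gq^p\subset\wh\gg^p+\wh\stab$. Here I would peel off leading graded components: given $a\in\gq^p$ with lowest nonzero degree $\mu$, if $\mu\geq p$ then $a\in\wh\gg^p$ and there is nothing to do; if $\mu<p$, then $a\in\gq^{\mu+1}\subset\gq^{\mu+1}+\overline\gq^{\mu+1}$, so Proposition~\ref{prop:crucial}(ii) applied to $a\in\wh\gg^\mu$ with class $\llbracket a\rrbracket=a_\mu$ gives $a_\mu\in\gu_\mu\cap\overline\gu_\mu$, whence $a_\mu\in\wh\stab$ by Proposition~\ref{lem:trivialfil1}. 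Since $\wh\stab\subset\gq^p$, the difference $a-a_\mu$ again lies in $\gq^p$ with strictly larger bottom degree; iterating (which terminates by finite-dimensionality, a consequence of $k$-nondegeneracy) writes $a$ as a sum of an element of $\wh\stab$ and an element of $\wh\gg^p$.

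The step I would treat most carefully is this peeling induction: at each stage one must verify that the current remainder still lies in $\gq^p$ (using $\wh\stab\subset\gq^p$) and that the hypothesis $a\in\gq^{\mu+1}+\overline\gq^{\mu+1}$ of Proposition~\ref{prop:crucial}(ii) holds, which follows from $\mu<p$ since then $\gq^p\subset\gq^{\mu+1}$. All remaining points are bookkeeping over the finitely many graded degrees.
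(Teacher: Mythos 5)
Your proposal is correct and follows essentially the same route as the paper: the easy inclusion via \eqref{eq:eval-p} and $\wh\stab\subset\gq^p$, and the reverse inclusion by peeling off the graded components of lowest degree of an element of $\gq^p$ and placing them in $\wh\stab$ via Proposition~\ref{prop:crucial}(ii) together with Proposition~\ref{lem:trivialfil1} (the paper packages this combination as Corollary~\ref{cor:stab!}). Your explicit argument for the $\mathbb Z$-gradedness of $\wh\stab$ is a correct unpacking of what the paper dismisses as following "readily" from $\gq=\wh\gg\cap\gu$.
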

\begin{proof}
The first claim follows readily by $\gq=\wh\gg\cap\gu$ of Corollary \ref{cor:stab!} and the fact that $\wh\stab=\gq\cap\overline\gq$.

We already saw that $\wh\gg^p+\wh\stab\subset\gq^p+\overline\gq^p$ and we now establish the opposite inclusion. Let $\xi\in\gq^p$, which we decompose into $\xi=\sum_{\mu\leq j\leq \nu}\xi_{j}$ with $-1\leq \mu\leq \nu\leq\hslash$, $\xi_\mu\neq 0$, and $\xi_\nu\neq 0$. By Proposition \ref{lem:trivialfil1} and Proposition \ref{prop:crucial},
%\ref{prop:basic-properties-filtration}, 
each component $\xi_j\in\gq^j$.

If $\mu\geq p$, there is nothing to prove, since $\xi\in\wh\gg^p$ automatically.
If instead $\mu<p$, then $\xi_\mu$ has to be in $\gq^{\mu+1}$ since $\xi\in\gq^p$, hence $\xi_\mu\in\wh\stab$ by Corollary \ref{cor:stab!}. Iterating the argument we see that $\xi_i\in\wh\stab$ for all $\mu\leq j<p$, whence
$\xi\in\wh\stab+\wh\gg^p$. We have shown that 
$\gq^p\subset\wh\gg^p+\wh\stab$ and the
desired inclusion follows from conjugation.
\end{proof}
Taking the intersection with $\gq=\wh\gg\cap\gu$ immediately yields the following nice characterization of the $q$-th term of the Freeman sequence of $(\gg,\gq)$, which is in agreement with \cite[eq. (4.1) in the proof of Thm. 4.2]{San}:
\begin{corollary}
\label{cor:charact}
If $\gg$ is an aligned trivial filtered deformation, then
$$\gq^q=\bigoplus_{0\leq p\leq q-1}\wh\stab_{p}\oplus\bigoplus_{p\geq q} \wh\gg_p\cap\gu_p\;,$$ 
for all $q\geq 0$.
\end{corollary}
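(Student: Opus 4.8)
The plan is to intersect the identity $\gq^q+\overline\gq^q=\wh\gg^q+\wh\stab$ of Proposition \ref{prop:stab-and-freeman} with $\gq$, and then to exploit that every subspace involved is $\mathbb Z$-graded, so that the intersection can be carried out degree by degree.

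First I would record the set-theoretic identity $\gq\cap(\gq^q+\overline\gq^q)=\gq^q$. The inclusion $\supseteq$ is immediate, since $\gq^q\subset\gq$. For $\subseteq$, take $\xi\in\gq$ and write $\xi=a+b$ with $a\in\gq^q$ and $b\in\overline\gq^q$; then $b=\xi-a\in\gq\cap\overline\gq^q\subset\gq\cap\overline\gq=\wh\stab$. Because $\wh\stab=\gq\cap\overline\gq$ is the smallest (terminal) term of the Freeman nested sequence $\gq^{-1}\supset\gq^0\supset\cdots\supset\gq\cap\overline\gq$, it is contained in every $\gq^q$, whence $\xi=a+b\in\gq^q$. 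Combining this with Proposition \ref{prop:stab-and-freeman} yields $\gq^q=\gq\cap(\wh\gg^q+\wh\stab)$, which is the content of the phrase ``taking the intersection with $\gq=\wh\gg\cap\gu$''.

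Next I would make all the gradings explicit. By Proposition \ref{lem:trivialfil1} the subalgebra $\gq=\wh\gg\cap\gu$ is graded with degree-$p$ component $\gq\cap\wh\gg_p=\wh\gg_p\cap\gu_p$. By Proposition \ref{prop:stab-and-freeman} the stabilizer $\wh\stab$ is $\mathbb Z$-graded, and the inclusion $\wh\stab\subset\wh\gg^0$ forces $\wh\stab_p=0$ for $p<0$; together with $\wh\gg^q=\bigoplus_{p\geq q}\wh\gg_p$ (trivial filtered deformation, filtrands $\gg^p=\bigoplus_{j\geq p}\gg_j$) this gives the graded decomposition $\wh\gg^q+\wh\stab=\bigoplus_{0\leq p\leq q-1}\wh\stab_p\oplus\bigoplus_{p\geq q}\wh\gg_p$. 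Since the intersection of two graded subspaces is graded and is computed componentwise, I would then intersect in each degree $p$: for $p\geq q$ the component is $(\wh\gg_p\cap\gu_p)\cap\wh\gg_p=\wh\gg_p\cap\gu_p$, while for $0\leq p\leq q-1$ it is $(\wh\gg_p\cap\gu_p)\cap\wh\stab_p=\wh\stab_p$, using that $\wh\stab_p\subset\gq\cap\wh\gg_p=\wh\gg_p\cap\gu_p$; the components with $p<0$ vanish. Assembling these components gives exactly $\gq^q=\bigoplus_{0\leq p\leq q-1}\wh\stab_p\oplus\bigoplus_{p\geq q}\wh\gg_p\cap\gu_p$.

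Since every nontrivial ingredient (the identity of Proposition \ref{prop:stab-and-freeman}, the $\mathbb Z$-gradedness of $\wh\stab$, and the graded characterization $\gq=\wh\gg\cap\gu$ of Proposition \ref{lem:trivialfil1}) has already been established, I do not expect any serious obstacle. The only points requiring a little care are the set-theoretic identity $\gq\cap(\gq^q+\overline\gq^q)=\gq^q$, which hinges on $\wh\stab$ sitting inside every Freeman filtrand, and the routine bookkeeping that the intersection of graded subspaces is again graded and is taken degree by degree.
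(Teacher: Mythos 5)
Your proposal is correct and follows exactly the route the paper intends: the paper's entire proof is the remark that one obtains the corollary ``taking the intersection with $\gq=\wh\gg\cap\gu$'' of the identity $\gq^q+\overline\gq^q=\wh\gg^q+\wh\stab$ from Proposition \ref{prop:stab-and-freeman}, and you have simply (and correctly) filled in the details — the set-theoretic identity $\gq\cap(\gq^q+\overline\gq^q)=\gq^q$ via $\wh\stab\subset\gq^q$, and the degree-by-degree intersection of the graded subspaces $\gq=\bigoplus_p(\wh\gg_p\cap\gu_p)$ and $\wh\gg^q+\wh\stab$.
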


\subsection{$k$-nondegenerate homogeneous models}

We conclude this section with the first main result of this paper. To do so, we first recall the definition of homogeneous model in the sense of \cite[Def. 4.1]{San}. (The definition we give below is slightly reformulated so to make no reference to the concept of core, a generalization for $k$-nondegenerate CR manifolds of the usual notion of CR-symbol for Levi-nondegenerate CR manifolds.  The concept also appeared  for the case $k=2$  and with a different name in \cite{PZ}. Since there exists only one core for $3$-nondegenerate $7$-dimensional
CR manifolds, see \cite[\S 6]{San}, this will be enough for our purposes.)
\begin{definition}
\label{ukip}
A {\it $k$-nondegenerate model}
is the datum of a $\bZ$-graded Lie subalgebra 
$$
%\beq
%\label{hommod}
\gg=\bigoplus_{p\in\bZ}\gg_{p}
$$ 
of the infinite-dimensional contact algebra $\gc$ satisfying the following properties:
\begin{itemize}
\item[(i)] $\gg_-=\gc_-$;
\item[(ii)] the grading element $E$ is in $\gg_{0}$;
\item[(iii)] $\wh\gg_p=(\wh\gg_p\cap\gu_p)+(\wh\gg_p\cap\bar\gu_p)$ for all $p\geq 0$;
\item[(iv)] $\wh\gg_p$ projects to the $\ad(\mathfrak{J})$-eigenspace
 %$\mathfrak{M}^{p(10)}$
of maximum eigenvalue $\gc_{(p,p+2)}^{p+2}$ non-trivially for all $p\leq k-2$ and trivially for all $p\geq k-1$.
\end {itemize}
\end{definition}
We refer the interested reader to the original source \cite[\S 4]{San} for more details, in particular to Theorem 4.2 and Examples 4.4 and 4.5 therein. We now show that the graded Lie algebras $\gg$ considered in \S\ref{sec:2.5} (possibly supplemented by the grading element if this is originally absent) are in fact models in the sense of Definition \ref{ukip}.

\begin{theorem}
\label{thm:25}
Let $(\cM, \cD, \cJ)$ be a locally homogeneous and $k$-nondegenerate CR manifold of hypersurface type and assume that the Lie algebra $\gg=\inf(\cM,\cD,\cJ)$ of its infinitesimal CR automorphisms  is a trivial filtered deformation w.r.t. the contact filtration \eqref{eq:filtration} and that the identification $\gg\cong \gr(\gg)$ can be chosen aligned w.r.t. $\gq$ in the sense of Definition \ref{def:aligned}.
Then: 
%\begin{equation}
%\label{eq:to-recall}
 \begin{enumerate}
	\item The corresponding CR algebra is given by $(\gg,\gq)=(\gg,\wh\gg\cap\gu)$
%\end{equation} 
and it is a $\mathbb Z$-graded CR subalgebra of the universal CR algebra $(\gc,\gu)$, 
	\item $\gg+\mathbb R E=\bigoplus_{p\geq -2}\gg_p+\mathbb R E$ is a $k$-nondegenerate model. 
 \end{enumerate}
\end{theorem}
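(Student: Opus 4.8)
The plan is to verify, in order, properties (i)--(iv) of Definition \ref{ukip}, using the structural results assembled in \S\ref{sec:2.5}, and to interleave the proof of part (1) where it is needed. The essential input is that $\gg$ is an \emph{aligned} trivial filtered deformation, so Proposition \ref{lem:trivialfil1} applies and gives the identity $\gq=\wh\gg\cap\gu$ at once; this is precisely the content of part (1) regarding the complex structure. That $\gg$ is a graded subalgebra of $\gc$ follows from the discussion just before Proposition \ref{prop:crucial}, where it was observed that $\gr(\gg)$ embeds as a graded subalgebra of the contact algebra, together with the alignment assumption $\gg\cong\gr(\gg)$; the fact that $(\gg,\gq)$ is then a $\bZ$-graded CR \emph{subalgebra} of $(\gc,\gu)$ is exactly the statement $\gq=\wh\gg\cap\gu$ combined with the gradedness of $\wh\stab=\wh\gg\cap\gu\cap\overline\gu$ established in Proposition \ref{prop:stab-and-freeman}. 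So part (1) is essentially a repackaging of Corollary \ref{cor:stab!} and Proposition \ref{prop:stab-and-freeman}, and I would dispatch it first.

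For part (2) I would check the four axioms. Property (i), namely $\gg_-=\gc_-$, is immediate from Lemma \ref{lem:14}, which identifies $\gg_{-2}\oplus\gg_{-1}$ with the Heisenberg algebra $\gc_-$; one must only note that the negatively-graded part of $\gr(\gg)$ exhausts $\gc_-$ because of transitivity and hypersurface type. Property (ii) is where the phrase ``possibly supplemented by the grading element'' enters: if $E\notin\gg_0$ already, one adjoins $\bR E$, and since $E$ normalizes the filtration (it acts on $\gg_p$ by the scalar $p$) the enlarged algebra $\gg+\bR E$ is still a graded subalgebra of $\gc$ and still a CR algebra of the same CR manifold, so replacing $\gg$ by $\gg+\bR E$ is harmless. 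I would remark that adjoining $E$ does not affect alignment or triviality, as $E\in\gc_0$ acts diagonally and $E\in\gu_0\cap\overline\gu_0$.

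Property (iii), the condition $\wh\gg_p=(\wh\gg_p\cap\gu_p)+(\wh\gg_p\cap\overline\gu_p)$ for $p\geq 0$, is the heart of the matter and the step I expect to be the main obstacle. The natural route is via Corollary \ref{cor:10}: any nonzero $v\in\wh\gg_p$ lifts to some $\xi\in\gq^q\setminus\gq^{q+1}$ for a suitable $q$, and the corollary places $v$ either in $\gu_p\setminus(\gu_p\cap\overline\gu_p)$ or in $\gu_p\cap\overline\gu_p$; but this only decomposes individual elements and does not immediately yield a \emph{sum} decomposition of the whole space. The clean argument is instead to use $\gq=\wh\gg\cap\gu$ from part (1): by Lemma \ref{lemma:semi-aligned} every $\xi\in\gq$ has all graded components $\xi_j\in\gu_j$, so $\gq\cap\wh\gg_p=\wh\gg_p\cap\gu_p$, and likewise $\overline\gq\cap\wh\gg_p=\wh\gg_p\cap\overline\gu_p$. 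Then I would show $\wh\gg_p=(\gq\cap\wh\gg_p)+(\overline\gq\cap\wh\gg_p)$ by taking $v\in\wh\gg_p$, lifting it to $\xi\in\wh\gg^p$, invoking $\wh\gg^p\subset\gq^p+\overline\gq^p$ from Proposition \ref{prop:basic-properties-filtration}(vi) to write $\xi=\xi'+\xi''$ with $\xi'\in\gq^p$, $\xi''\in\overline\gq^p$, and passing to the leading degree-$p$ components $\xi'_p\in\gq\cap\wh\gg_p$, $\xi''_p\in\overline\gq\cap\wh\gg_p$, whose sum is $v$. This gives (iii) directly.

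Finally, property (iv) is a direct translation of the $k$-nondegeneracy hypothesis. By Corollary \ref{cor:10}, the projection of $\wh\gg_p$ to the maximal-eigenvalue space $\gc_{(p,p+2)}^{p+2}\cong S^{p+2}(\gc_{-1(10)})$ is nontrivial exactly when there exists $\xi\in\gq^p\setminus\gq^{p+1}$ with leading term in degree $p$, i.e.\ when $\gq^p\neq\gq^{p+1}$ at the graded level, and under the identification $\gq^p/\gq^{p+1}\cong\cF^p_{10}|_x$ this nonvanishing is equivalent to $\cF^p|_x\neq 0$. Since $k$-nondegeneracy means $\cF^p|_x\neq 0$ for $-1\leq p\leq k-2$ and $\cF^{k-1}|_x=0$, the projection is nontrivial for $p\leq k-2$ and trivial for $p\geq k-1$, which is exactly (iv). I would close by noting that (i)--(iv) together with part (1) complete the identification of $\gg+\bR E$ as a $k$-nondegenerate model in the sense of Definition \ref{ukip}.
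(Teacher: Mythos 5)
Your proposal is correct and follows essentially the same route as the paper's proof: part (1) is dispatched via Corollary \ref{cor:stab!}, property (iii) via the decomposition $\wh\gg^p\subset\gq^p+\overline\gq^p$ followed by extraction of the degree-$p$ components using Proposition \ref{lem:trivialfil1}, and property (iv) by translating $k$-nondegeneracy through the graded description of $\gq^q$. The only difference is cosmetic: in step (iv) the paper invokes Corollary \ref{cor:charact} to pin down that the low-degree components of $\xi\in\gq^q\setminus\gq^{q+1}$ lie in $\wh\stab$ (so that $\xi_q\notin\wh\stab_q$ furnishes the nontrivial projection onto $\gc^{q+2}_{(q,q+2)}$), a point your ``leading term in degree $p$'' phrasing compresses but which unwinds to the same argument.
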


\begin{proof}
The first claim has been already proved in Corollary \ref{cor:stab!}, so
it only remains to establish properties $(iii)$ and $(iv)$ of Definition \ref{ukip} for the Lie algebra $\gg+\mathbb R E$. 

Now $\wh\gg_p\subset \gq^p+\overline\gq^p$ by Proposition \ref{prop:basic-properties-filtration}, so any $\xi\in\wh\gg_{p}$ decomposes into $\xi=\xi'+\xi''$, with $\xi'\in\gq^p$, $\xi''\in\overline\gq^p$. Writing
$
\xi'=\sum_{j\geq -1}\xi'_{j}$, $\xi''=\sum_{j\geq -1}\xi''_{j}$, we
 note that $\xi'_p\in\wh\gg_p\cap\gu_p$ and $\xi''_p\in\wh\gg_p\cap\overline\gu_p$ by Proposition \ref{lem:trivialfil1}. This shows $\wh\gg_p\subset(\wh\gg_p\cap\gu_p)+(\wh\gg_p\cap\overline\gu_p)$, and the converse inclusion is obvious.

If $q\leq k-2$, then there exists some $\xi\in\gq^{q}\setminus\gq^{q+1}$. Writing $\xi=\sum_{j\geq -1}\xi_{j}$, then $\xi_j\in\wh\stab$ for all $j\leq q-1$ and  $\xi_q\in\wh\gg_q\cap\gu_q\setminus\wh\stab_q$
by Corollary \ref{cor:charact}.
%so we may assume w.l.og. that $\xi=\sum_{i\geq p}\xi_{i}$. 
Now $\xi_q\in\gq^q$ and if it were to project trivially to $\gc_{(q,q+2)}^{q+2}$ then it would be in $\overline\gq^q$, hence in $\wh\stab$, 
%and 
%$$\xi=\xi_p+\sum_{i\geq p+1}\xi_{i}$$ would be in $\wh\stab+\wh\gg^{p+1}\subset\gq^{p+1}+\overline\gq^{p+1}$ and therefore in $\overline\gq^{p+1}$, 
which is a contradiction. Hence $\xi_q$ is the desired element projecting non-trivially to $\gc_{(q,q+2)}^{q+2}$. 

Let $q\geq k-1$ and assume that there exists $\xi\in\wh\gg_q$ with a non-trivial projection to $\gc_{(q,q+2)}^{q+2}$. By property $(iii)$, we may decompose $\xi=\xi'+\xi''$, where $\xi'\in\wh\gg_q\cap\gu_q$ and 
$\xi''\in\wh\gg_q\cap\overline\gu_q$. Hence $\xi'\in\gq^{q}$ with a non-trivial projection to $\gc_{(q,q+2)}^{q+2}$, so $\xi'\notin\wh\stab$ and $\gq^{q}\neq \gq\cap\overline\gq$, a contradiction.
\end{proof}

\vskip0.1cm\par\noindent

\section{Homogeneous $7$-dimensional $3$-nondegenerate CR manifolds}\hfill\par
\label{sec:4}\setcounter{section}{4}\setcounter{equation}{0}

Here we classify the locally homogeneous $7$-dimensional $3$-nondegenerate CR manifolds of hypersurface type up to local CR equivalence. We will consider global CR equivalence in \S\ref{sec:3.3}.

\subsection{The maximal symmetric homogeneous space: abstract model}\label{sec:3.1}

The model we now describe is realized as a real Lie subalgebra of the
complex contact algebra $\wh\gc=\bigoplus\wh\gc_p$ with negatively-graded part 
$\wh\gc_{-2}=\bC e_{-2}$, $\wh\gc_{-1}=\left\langle z,\overline z\right\rangle$, where 
$e_{-2}$ is a real generator of $\gc_{-2}$ and $z$ a basis of $\gc_{(-1,1)}^1$. We note that
each term of the universal CR algebra
$$\gc_{(p,\ell)}^{k}\cong\bC z^{\tfrac{k+\ell}{2}}\overline z^{\tfrac{k-\ell}{2}}$$ 
is one-dimensional, where we dropped the symbol $\odot$ in the expression of symmetric products. 
 In our conventions the grading element $E= -2\in\gc_{(0,0)}^{0}$ and 
$\mathfrak{J}=
2z\bar z\in\gc_{(0,0)}^{2}$. Furthermore $\gc_{(1,1)}^{1}\cong \mathbb C z$ and $\gc_{(1,-1)}^{1}\cong\mathbb C \overline z$, and we will always make clear from the context if $z,\overline z$ have to be regarded as elements in degree $+1$ instead of $-1$.

Theorem 6.1 of \cite{San} states 
that there exists a $7$-dimensional $3$-nondegenerate homogeneous model $\gg$, unique up to isomorphism:
it is the $8$-dimensional $\bZ$-graded Lie subalgebra
\begin{equation*}
%\label{modellino}
\gg=\bigoplus_{p\in\bZ}\gg_{p}
\end{equation*} 
of the real contact algebra $\gc$ with components
$$
\gg_p=\begin{cases}
0\;\;\;\;\;\;\;\;\;\;\;\;\;\;\;\;\;\;\;\;\;\;\;\;\;\;\;\;\;\;\;\;\;\;\;\;\;\;\;\;\;\;\;\;\;\;\;\;\;\;\;&\text{for all}\;\;p>1\;,\\
\Re\left\langle N, \overline N\right\rangle\,\,\,\;\;\;\;\;\;&\text{for}\;\;p=1\;,\\
\Re\left\langle E, M, \overline M\right\rangle\,\;\;\;\;\;&\text{for}\;\;p=0\;,\\
\gc_{p}\,\;\;\;\;\;\;\;\;\;&\text{for}\;\;p=-2, -1\;,\\
%0\;\;\,\;\;\;\;\;\;\;\;&\text{for all}\;\;p>1\;,\\
\end{cases}
$$
where $M=z^2+z\overline z$ and  $N=z^3+2z^2\bar z+z\bar z^2-3iz-3i\bar z$. The non-trivial Lie brackets are given by the obvious action of the grading element and the following relations (together with their conjugates):
\begin{equation}
\label{eq:brackets-model}
\begin{aligned}
[z,\overline z]&=-\frac{i}{2}e_{-2}\;,\quad [M,z]=\frac{i}{2}z\;,\quad [M,\overline z]=-iz-\frac{i}{2}\overline z\;,\\
[M,\overline M]&=-i(M+\overline M)\;,\quad
[N,e_{-2}]=-3i(z+\overline z)\;,\quad[N,z]=-\frac{i}{2}M-\frac{3}{4}E\;,\\
[N,\overline z]&=-\frac{3}{2}iM-2i\overline M+\frac{3}{4}E\;,\quad
[M,N]=-\frac{i}{2}N\;,\quad [\overline M,N]=\frac{3}{2}iN+i\overline N\;.
\end{aligned}
\end{equation}

The associated terms of the Freeman sequence are
\begin{align*}
\gq^{-1}&=\gq=\left\langle z, E, M, N  \right\rangle\,,\quad
\gq^{0}=\left\langle E, M, N \right\rangle\,,\\
\gq^{1}&=\left\langle E, N  \right\rangle\,,\qquad
\gq^2=\gq\cap\overline\gq=\left\langle E\right\rangle\;,
\end{align*}
and the contact filtration \eqref{eq:filtration} coincides with the natural filtration associated to the grading.

The Lie algebra $\gg_0$ is the Borel subalgebra of $\ggl(\gg_{-1})$ stabilizing the line $z+\overline z$ in $\gg_{-1}$. 
As abstract Lie algebra $\gg\cong\ggl_2(\bR)\ltimes S^3\bR^2$, with $5$-dimensional radical $\mathfrak{rad}(\gg)=\mathfrak{z}(\ggl_2(\mathbb R))\ltimes S^3\mathbb R^2$ and Levi factor $\gsl_2(\mathbb R)$ as follows:
\begin{align*}
\mathfrak{rad}(\gg)&=\langle e_{-2}, z+\overline z, M+\overline M, E-i(M-\overline M), N+\overline N\rangle\;,\\
\gsl_2(\mathbb R)&=\langle \widetilde E=-\tfrac i2(M-\overline M)-\tfrac32 E, X=-\tfrac{i}{\sqrt 2}(z-\overline z), Y=-\tfrac{i}{\sqrt 2}(N-\overline N)\rangle\;.
\end{align*}
Here the element $E-i(M-\overline M)\cong\operatorname{diag}(-2/3,-2/3)$ generates the center $\mathfrak{z}(\ggl_2(\mathbb R))$ of $\ggl_2(\mathbb R)$ 
%and it acts with eigenvalues $-2$ on $S^3\mathbb R^2$.
whereas $E\cong\operatorname{diag}(-2/3,1/3)$ the stabilizer subalgebra.

We summarize the Lie algebra structure and grading corresponding to the contact filtration 
in the following root diagram of $\gg$ (as usual, nontrivial brackets of root vectors correspond to
nontrivial sums of roots), where we also indicated generators of graded components and circled the 
stabilizer subalgebra:

 \begin{center}
\begin{tikzpicture}
\node (A) at (-2,2) {$\gg_{-1}$};
\node (B) at (0,2) {$\gg_0$};
\node (C) at (2,2) {$\gg_1$}; 
\node (a) at (-3,0) {$\gg_{-2}$};
\node (b) at (-1,0) {$\gg_{-1}$};
\node (c) at (1,0) {$\gg_0$};
\node (d) at (3,0) {$\gg_1$};
\draw[dotted] (-0.69,2.54) circle (0.25);
\path[->,font=\scriptsize,>=angle 90]
(B) edge (A)
(B) edge (C)
(B) edge (a)
(B) edge (b)
(B) edge (c)
(B) edge (d);
\node[black!70] (A1) at (-2,2.5) {$\mathfrak{Im}(z)$};
\node[black!70] (B1) at (0,2.5) {$E$\,,\,$\mathfrak{Im}(M)$};
\node[black!70] (C1) at (2,2.5) {$\mathfrak{Im}(N)$};
\node[black!70] (a1) at (-3,-0.5) {$e_{-2}$};
\node[black!70] (b1) at (-1,-0.5) {$\Re(z)$};
\node[black!70] (c1) at (1,-0.5) {$\Re(M)$};
\node[black!70] (d1) at (3,-0.5) {$\Re(N)$};
\end{tikzpicture}
 \end{center}

For an explicit coordinate embedding in $\bC^4$ of this model, see \S \ref{sec:5}. It is a refinement of that found at the end of \cite[\S 5.1]{FK2} -- our proposed geometric interpretation in terms of the rational normal curve of degree $3$ is also amenable to generalizations, see again \S \ref{sec:5}.

\subsection{The maximal symmetric homogeneous space in disguise}
\label{sec:section-added}

In this auxiliary section, we present some locally homogeneous $7$-dimensional $3$-nondegenerate CR manifolds relevant for the proof of our local classification result in \S\ref{sec:3.2}. They are given in terms of their CR algebras $(\gs,\gp)$, where, in all cases, $\gs$ is a subalgebra of the $8$-dimensional Lie algebra $\gg$ of \S\ref{sec:3.1} but $\gp$ is not readily related to $\gq$. A posteriori, it turns out that all these examples are (locally) geometrically equivalent to the maximally symmetric model. 
\begin{example}
\label{ex:1-parameter}
The locally homogeneous model $(\gg,\gq)$ of \S\ref{sec:3.1} admits a $1$-parameter family of deformations $(\gs_t,\gp_t)$, $t\in\mathbb C$, defined as follows: $\gs_t=\gg$ for all $t\in\mathbb C$, whereas $\gp_t$ is generated by
\begin{align*}
\eta&=z+t \overline M-t^2\overline N\;,\\
\epsilon&=M+t \overline N\;,\\
\xi&=N\;,\\
E_o&=E-\tfrac 23 i\overline t N+\tfrac 23 i t\overline N\;.
\end{align*}
It is straightforward to see that $\gp_t$ is a complex subalgebra of $\wh\gs_t=\wh\gg$ and that the associated terms of the Freeman sequence are
\begin{align*}
\gp_t^{-1}&=\gp_t=\left\langle \eta, \epsilon, \xi, E_o  \right\rangle\,,\quad
\gp_t^{0}=\left\langle \epsilon, \xi, E_o \right\rangle\,,\\
\gp_t^{1}&=\left\langle \xi, E_o  \right\rangle\,,\qquad
\gp_t^2=\gp_t\cap\overline\gp_t=\left\langle E_o\right\rangle\;.
\end{align*}
However, this is just the maximally symmetric homogeneous space $(\gg,\gq)$ in disguise. In fact, the CR algebra 
$(\gs_t,\gp_t)=e^{\ad_X}\cdot\,(\gg,\gq)$ for $X=\tfrac 23 i\big(\overline t N-t\overline N\big)\in\gg_1$ and the $1$-parameter family of deformations consists of CR algebras that are all isomorphic.
\end{example}
\begin{example}
\label{ex:1-parameter-bis}
Consider the $7$-dimensional graded subalgebra $\gs$ of $\gg$ with components
$$
\gs_p=\begin{cases}
0\;\;\;\;\;\;\;\;\;\;\;\;\;\;\;\;\;\;\;\;\;\;\;\;\;\;\;\;\;\;\;\;\;\;\;\;\;\;\;\;\;\;\;\;\;\;\;\;\;\;\;&\text{for all}\;\;p>1\;,\\
\Re\left\langle N, \overline N\right\rangle\,\,\,\;\;\;\;\;\;&\text{for}\;\;p=1\;,\\
\Re\left\langle L,\overline L\right\rangle\,\;\;\;\;\;&\text{for}\;\;p=0\;,\\
\gc_{p}\,\;\;\;\;\;\;\;\;\;&\text{for}\;\;p=-2, -1\;,\\
%0\;\;\,\;\;\;\;\;\;\;\;&\text{for all}\;\;p>1\;,\\
\end{cases}
$$
where $L=2i M+3E$. It is isomorphic to $\gsl_2(\bR)\ltimes S^3\bR^2$ and its brackets are given in \eqref{eq:brackets-smaller} later on. If we endow it with the complex subalgebra $\gp=\gq\cap\wh\gs=\left\langle z,L,N \right\rangle$ of $\wh\gs$, we just get a simply transitive CR subalgebra $(\gs,\gp)$ of the maximally symmetric homogeneous space $(\gg,\gq)$.

We define the $1$-parameter family of deformations $(\gs_t,\gp_t)$, $t\in\mathbb C$, as follows: $\gs_t=\gs$ for all parameters, and $\gp_t$ is generated by
\begin{align*}
\eta&=z+t \overline L+8t^2\overline N\;,\\
\epsilon&=L+8t \overline N\;,\\
\xi&=N\;.
\end{align*}
The latter is a complex subalgebra of $\wh\gs_t=\wh\gs$ and the associated terms of the Freeman sequence are
\begin{align*}
\gp_t^{-1}&=\gp_t=\left\langle \eta, \epsilon, \xi  \right\rangle\,,\quad
\gp_t^{0}=\left\langle \epsilon, \xi \right\rangle\,,\\
\gp_t^{1}&=\left\langle \xi  \right\rangle\,,\qquad
\gp_t^2=\gp_t\cap\overline\gp_t=0\;.
\end{align*}
In this case
$(\gs_t,\gp_t)=e^{\ad_X}\cdot\,(\gs,\gp)$ for $X=-\tfrac 43\big(\overline t N+t\overline N\big)\in\gs_1$ and the $1$-parameter family of deformations consists of isomorphic CR algebras.
Again, this is just the maximally symmetric homogeneous model in disguise.
\end{example}
\begin{example}
\label{ex:1-parameter-tris}
Finally we consider the $7$-dimensional graded subalgebra $\mathfrak{s}$ of $\gg$ with components
$$
\gs_p=\begin{cases}
0\;\;\;\;\;\;\;\;\;\;\;\;\;\;\;\;\;\;\;\;\;\;\;\;\;\;\;\;\;\;\;\;\;\;\;\;\;\;\;\;\;\;\;\;\;\;\;\;\;\;\;&\text{for all}\;\;p>1\;,\\
\Re\left\langle \Xi:=N+\overline N\right\rangle\,\,\,\;\;\;\;\;\;&\text{for}\;\;p=1\;,\\
\Re\left\langle E, M,\overline M\right\rangle\,\;\;\;\;\;&\text{for}\;\;p=0\;,\\
\gc_{p}\,\;\;\;\;\;\;\;\;\;&\text{for}\;\;p=-2, -1\;,\\
%0\;\;\,\;\;\;\;\;\;\;\;&\text{for all}\;\;p>1\;,\\
\end{cases}
$$
together with the complex subalgebra $\gp_t$ of $\wh\gs$ generated by
\begin{align*}
\eta&=z+t\overline M-t^2\Xi\;,\\
%z+\big(\overline z^2+z\overline z\big)-\Xi\;,\\
\epsilon&=M+t\Xi\;,\\
%\big(z^2+z\overline z\big)+\Xi\;,\\
\xi&=E+\tfrac23 it\Xi\;.
\end{align*}
Again we set $\gs_t=\gs$ for all $t\in\mathbb C$ and we obtain the $1$-parameter family of CR algebras $(\gs_t,\gp_t)$. 
We also let $\gp=\gp|_{t=0}=\,\langle z,M,E\rangle $ and note that $\gp_t=e^{\ad_{X_t}}\cdot\,\gp$ for $X_t=-\tfrac 23it\Xi\in\wh\gs_1$.

If $t\in i\mathbb R$ is purely imaginary, then $X_t\in\gs_1$ is real and the CR algebra $(\gs_t,\gp_t)=e^{\ad_{X_t}}\cdot\,(\gs,\gp)$ is isomorphic to $(\gs,\gp)$. The complexified stabilizer $\gp_t\cap\overline\gp_t=\,\langle \xi\rangle$ is non-trivial, in fact we get a $6$-dimensional locally homogeneous CR manifold of CR-dimension $2$ and CR-codimension $2$. 
(One may show that this can be realized as an hypersurface inside the maximally symmetric homogenoeus model, but we won't need this fact.) 

If $t\notin i\mathbb R$, we write $X_t=X_{\Re(t)}+X_{i\mathfrak{Im}(t)}$, so
$\gp_t=e^{\ad_{X_{i\mathfrak{Im}(t)}}}\cdot e^{\ad_{X_{\Re(t)}}}\cdot\,\gp=
e^{\ad_{X_{i\mathfrak{Im}(t)}}}\cdot\, \gp_{\Re(t)}$, 
and
\begin{align*}
(\gs_t,\gp_t)&=e^{\ad_{X_{i\mathfrak{Im}(t)}}}\cdot\,(\gs,\gp_{\Re(t)})\\
&\cong (\gs,\gp_{\Re(t)})
\end{align*}
as CR algebras. We may then restrict to the case where the parameter $t$ is real and non-zero. We have a simply transitive action on a $7$-dimensional CR manifold of hypersurface type, with associated terms of the 
Freeman sequence
\begin{align*}
\gp_t^{-1}&=\gp_t=\left\langle \eta, \epsilon, \xi  \right\rangle\,,\quad
\gp_t^{0}=\left\langle \epsilon, \xi \right\rangle\,,\\
\gp_t^{1}&=\left\langle \xi  \right\rangle\,,\qquad
\gp_t^2=\gp_t\cap\overline\gp_t=0\;.
\end{align*}
It is therefore $3$-nondegenerate. However, as we will now explain, this is again geometrically equivalent to the maximally symmetric homogeneous model.

First of all, $\gs_t$ can be embedded in $\gg$ via the morphism of real Lie algebras $\varphi:\gs_t\to\gg$ given by the following formulae
\begin{equation}
\label{eq:immersion}
\begin{aligned}
e_{-2}&\mapsto e_{-2}- 4t (z+\overline z) + \tfrac{16}{3}t^2 (M+\overline M) - \tfrac{64}{27}t^3\Xi\;,\\
z+\overline z&\mapsto z+\overline z - \tfrac{8}{3}t (M+\overline M) + \tfrac{16}{9}t^2 \Xi\;,\\
z-\overline z&\mapsto z-\overline z + 2it E  - \tfrac{2}{3}t (M-\overline M) - \tfrac{8}{9}t^2 (N-\overline N)\;,\\
E&\mapsto E+ \tfrac{2}{3}it (N-\overline N)\;,\\
M+\overline M&\mapsto M+\overline M - \tfrac{4}{3}t \Xi\;,\\
M-\overline M&\mapsto M-\overline M + \tfrac{2}{3}t (N-\overline N)\;,\\
\Xi&\mapsto \Xi\;,
%(N-\overline N)&\mapsto (N-\overline N)\;.
\end{aligned}
\end{equation}
and their complex-linear extensions.
Since $\varphi$ is compatible with the contact filtrations of $\gs_t$ and $\gg$, and it induces the identity map at the graded level (observe the leading terms on the r.h.s. of \eqref{eq:immersion}), then it is injective. The verification that \eqref{eq:immersion} is a morphism of Lie algebras is a direct computation, which also shows that its complex-linear extension sends $\gp_t$ into $\gq$. (Alternatively, these computations were also performed in Maple and they can be found
in a supplement accompanying the arXiv posting of this article.) In summary, $\varphi:(\gs_t,\gp_t)\to(\gg,\gq)$ is an injective morphism of CR algebras.

It follows that $\varphi(\gp_t)\subset \varphi(\widehat\gs_t)\cap\gq$, where we denoted the complex-linear extension of $\varphi$ with the same symbol. The subalgebra $\varphi(\gp_t)$ is $3$-dimensional because $\varphi$ is injective, while $\varphi(\widehat\gs_t)\cap\gq$ is contained in $\gq$ and therefore is at most $4$-dimensional. If $\dim\big(\varphi(\widehat\gs_t)\cap\gq\big)=4$, then 
\begin{align*}
\varphi(\widehat\gs_t)\cap\gq=\gq&\Longrightarrow \gq\subset \varphi(\widehat\gs_t)\\
&\Longrightarrow \gq+\overline\gq\subset \varphi(\widehat\gs_t)
\end{align*}
because $\varphi(\widehat\gs_t)$ is closed by conjugation. However, the element $N-\overline N\in \gq+\overline\gq$ is not in $\varphi(\widehat\gs_t)$, showing that the assumption $\dim\big(\varphi(\widehat\gs_t)\cap\gq\big)=4$ is impossible. Thus $\varphi(\widehat\gs_t)\cap\gq$ is $3$-dimensional and it coincides with $\varphi(\gp_t)$. In other words,
$(\gs_t,\gp_t)$ is isomorphic to a CR subalgebra of $(\gg,\gq)$, so it is geometrically equivalent to the maximally symmetric model.
\end{example}

\subsection{Proof of the main results: local theory}\label{sec:3.2}

Our aim here is to prove the local homogeneous claim of Theorem \ref{T1}, namely the % first 
part of Theorem \ref{T2} concerning infinitesimal symmetries.

Let $\gg=\inf(\cM,\cD,\cJ)$ be the Lie algebra of infinitesimal CR automorphisms of a 
locally homogeneous $7$-dimensional $3$-nondegenerate CR manifold $(\cM,\cD,\cJ)$ of hypersurface type, with the contact filtration \eqref{eq:filtration} and the sequence of Freeman subalgebras $\gq\supset \gq^0\supset\gq^{1}\supset\gq\cap\overline\gq$ of the complexification $\wh\gg=\gg\otimes\bC$. 
By the results of \S\ref{sec:2}, we have the inclusion 
$\wh\gg^1\subset \gq^1+\overline\gq^1\subset\wh\gg^0$, and we also note that $\dim \gq^1/(\gq\cap\overline{\gq})=1$ and $\dim\cF^1|_x=2$ by obvious reasons. The evaluation map 
\begin{equation}
\ev_x:\wh\gg^1\rightarrow \cF^{1}|_x\cong \big(\gq^1+\overline\gq^1\big)/\gq\cap\overline\gq
\label{eq:ev-1}
\end{equation}
at a fixed point $x\in \cM$ can therefore be the zero map, have rank $1$, or be surjective.
\vskip0.1cm\par\noindent
 
 \begin{theorem}
\label{thm:dichotomy-notreally}
Every locally homogeneous $7$-dimensional $3$-nondegenerate CR manifold of hypersurface type $(\cM,\cD,\cJ)$ is locally CR diffeomorphic to the homogeneous model of \S\ref{sec:3.1}.
 \end{theorem}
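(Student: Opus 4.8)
The plan is to reconstruct the CR algebra $(\gg,\gq)$ from the associated graded Lie algebra $\gr(\gg)=\bigoplus_{p\geq -2}\gg_p$ of the contact filtration \eqref{eq:filtration} and then match the result against the model of \S\ref{sec:3.1}. By Lemma \ref{lem:14} the negative part $\gg_-=\gg_{-2}\oplus\gg_{-1}$ is the three-dimensional Heisenberg algebra, so $\gr(\gg)$ embeds as a graded subalgebra of the contact algebra $\gc$ built on the two-dimensional space $\gc_{-1}=\gg_{-1}$, whose zero component is $\gc_0\cong\gsl_2(\bR)\oplus\bR E$. The first step is to determine $\gg_0$: it contains the grading element $E$ and the stabilizer $\stab$, it injects into $\gc_0$ by Lemma \ref{lem:14}, and it must be compatible with the complex structure $\fJ\in\gc_0$ induced by $\cJ|_x$. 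Combining the $\gg_0$-action on $\gg_{-1}$ with the dimension constraints of $3$-nondegeneracy (forcing $\dim\cF^1|_x=2$ and the correct size of $\cF^0|_x$) should pin $\gg_0$ down to the three-dimensional Borel subalgebra stabilizing the real line $\Re(z)\subset\gg_{-1}$, exactly as in \S\ref{sec:3.1}.

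With $\gg_{\leq 0}$ fixed, the higher structure and the complex structure are controlled by the trichotomy for $\ev_x\colon\wh\gg^1\to\cF^1|_x$ of \eqref{eq:ev-1}, whose rank is $0$, $1$, or $2$. The rank-zero case is excluded directly: then $\wh\gg^1\subset\wh\stab$, so a generator of $\gq^1\setminus(\gq\cap\overline\gq)$ witnessing $3$-nondegeneracy has contact degree $0$ and, by Corollary \ref{cor:10}, a nonzero class in $\gu_0\cap\overline\gu_0$ lying outside $\wh\stab$; but $\gu_0\cap\overline\gu_0$ meets the Borel $\gg_0$ only along $\wh\stab$ (as $\fJ\notin\gg_0$), a contradiction. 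In the surjective case one has $\wh\gg^1+\wh\stab=\gq^1+\overline\gq^1$ by Remark \ref{remark:later-on}, and together with $\dim\cF^1|_x=2$ this determines the non-negative part of $\gr(\gg)$; the rank-one case corresponds to the situation of Example \ref{ex:1-parameter-tris}.

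It remains to identify the complex structure $\gq$ on the determined graded algebra, and this is where I expect the main obstacle. As Example \ref{ex:4nondeg} shows, $\gq$ need not be aligned with the grading — the complex structure can interact nontrivially with the contact filtration — so a priori $\gq$ ranges over a family of compatible complex subalgebras of $\wh\gg$. Reconstructing this family reproduces the $1$-parameter deformations $(\gs_t,\gp_t)$ of \S\ref{sec:section-added}: Examples \ref{ex:1-parameter} and \ref{ex:1-parameter-bis} in the surjective case and Example \ref{ex:1-parameter-tris} in the rank-one case. The crux is to collapse each family to a single equivalence class. In the surjective case one conjugates by a suitable $e^{\ad_X}$, $X\in\gg_1$, to align $\gq$, whereupon Theorem \ref{thm:25} realizes $(\gg,\gq)=(\gg,\wh\gg\cap\gu)$ as a graded CR subalgebra of the universal CR algebra, i.e.\ a $3$-nondegenerate model in the sense of Definition \ref{ukip}, necessarily the one of \S\ref{sec:3.1} by \cite[Thm.~6.1]{San}. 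In the rank-one case the explicit embedding \eqref{eq:immersion} realizes $(\gs_t,\gp_t)$ as a simply transitive CR subalgebra of the model.

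Assembling the cases, $(\gg,\gq)$ is in every instance isomorphic, as a CR algebra, to a transitive CR subalgebra of the model of \S\ref{sec:3.1}, and the reconstruction of \S\ref{sec:2.2} then identifies the germ of $(\cM,\cD,\cJ)$ at $x$ with that of the model. The recurring difficulty is the non-aligned bookkeeping — keeping track of how $\cJ$ meets the contact filtration — together with the verification, via the quasi-grading of Proposition \ref{prop:quasi-grading} and direct computation, that the conjugations $e^{\ad_X}$ and the embedding \eqref{eq:immersion} are genuine morphisms of CR algebras; these are precisely what reduce the non-graded and non-aligned presentations to the canonical model.
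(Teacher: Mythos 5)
Your outline matches the paper's architecture — the trichotomy on the rank of the evaluation map \eqref{eq:ev-1}, the reduction of non-aligned presentations of $\gq$ to the one-parameter families of \S\ref{sec:section-added}, and the final appeal to \cite[Thm.~6.1]{San} — but two of the steps you treat as routine are where the actual work lies, and your preliminary step is false. You claim to pin $\gg_0$ down to the Borel subalgebra \emph{before} the case analysis; in fact $\dim\gg_0$ varies across the cases and must be determined within each one. In the rank-zero case the surjectivity of $\ev_x:\wh\gg^0/\wh\gg^1\to\cF^0|_x$ forces $\dim\wh\gg_0\geq 4$, i.e.\ $\gg_0=\gc_0\cong\ggl_2(\bR)$ (so $\fJ\in\gg_0$), which destroys your exclusion argument: it rests on $\gu_0\cap\overline\gu_0$ meeting a Borel only along $\wh\stab$, a hypothesis that does not hold here. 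The paper excludes rank zero by an entirely different route: it normalizes explicit generators $\eta,\epsilon,\xi$ of the Freeman filtrands, imposes integrability of $\gq$ to get the system \eqref{eq:4.5}--\eqref{eq:4.6}, deduces $\eta=z+\beta E$, and then contradicts $3$-nondegeneracy via the iterated bracket $[[[\xi,\overline\eta],\overline\eta],\overline\eta]$. In the rank-two case you also miss the sub-case $E\notin\gr(\gg)$, where $\gg_0$ is only two-dimensional and $\gr(\gg)\cong\gsl_2(\bR)\ltimes S^3\bR^2$.

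That omission hides the second, larger gap: when the grading element is absent one cannot invoke the Singer--Sternberg--Kac criterion to conclude that $\gg$ is a trivial filtered deformation, so Theorem \ref{thm:25} is not available until filtration rigidity of $\gsl_2(\bR)\ltimes S^3\bR^2$ is established. This occupies the entire second part of the paper's proof and requires computing the Spencer cohomology groups $H^{d,2}(\gg_-,\gr(\gg))$ for $d=1,\dots,4$, checking that the nonzero cocycles are rescaled nontrivially by $\widetilde E$, and using the almost-full-prolongation machinery of \cite{CK} (or, alternatively, the explicit Jacobi-identity computation with the parameters $\lambda_i$). Your proposal contains no substitute for this. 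Finally, the reduction of each non-aligned family of complex structures to a single conjugacy class is asserted but not derived: in the paper it comes out of solving the integrability equations (e.g.\ \eqref{eq:brackets-symmetries-q}) for the normalized generators, and only a posteriori do the solutions organize into orbits of $e^{\ad_X}$. So the proposal is a correct roadmap of the paper's proof, but the three hardest ingredients — the rank-zero contradiction, the filtration rigidity, and the explicit classification of compatible $\gq$'s — are either missing or replaced by an argument that does not go through.
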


\vskip0.1cm\par\noindent
The proof of this result splits in three parts. In the first part, we show that 
$(\cM,\cD,\cJ)$ is either locally CR diffeomorphic to the homogeneous
model or its symmetry algebra $\gg=\inf(\cM,\cD,\cJ)$ is a filtered deformation of $\gsl_2(\bR)\ltimes S^3\bR^2$. 
More precisely, the proof of the first part relies on the rank of the map \eqref{eq:ev-1}: we show that the case of rank $0$ is impossible, the case of rank $1$ leads to the Example \ref{ex:1-parameter-tris}, and, finally, the case of rank $2$ leads either to the homogeneous model of \S\ref{sec:3.1}, to the Example \ref{ex:1-parameter}, or to the conclusion that $\gg$ is a filtered deformation of $\gsl_2(\bR)\ltimes S^3\bR^2$. In the second part, we prove filtration rigidity, i.e., no such non-trivial deformations are possible. 
In the third and last part, we consider the complex structure on such trivial deformations and show that they are simply-transitive subalgebras of the symmetry algebra of the maximally symmetric homogeneous model (in particular, those of the Example \ref{ex:1-parameter-bis}). 

\vskip0.2cm\par\noindent
{\bf First part of the proof.}
\vskip0.2cm\par\noindent
\underline{\it The map \eqref{eq:ev-1} has rank $0$.} 
We know that the evaluation map $\ev_x:\wh\gg^0\rightarrow \cF^0|_x\cong \big(\gq^0+\overline\gq^0\big)/\gq\cap\overline\gq$ is surjective, thanks to $(v)$ of Proposition \ref{prop:basic-properties-filtration}. If \eqref{eq:ev-1} is the trivial map, we may quotient by $\widehat\gg^1$ and the map $\ev_x:\wh\gg^0/\wh\gg^1\rightarrow \cF^0|_x\cong \big(\gq^0+\overline\gq^0\big)/\gq\cap\overline\gq$ is still surjective, so $\wh\gg_0\cong \wh\gg^0/\wh\gg^1$ has dimension at least $4$. Since
$\wh\gc_0\cong\ggl_2(\bC)$ and $\wh\gg_0\subset \wh\gc_0$, we see that $\wh\gg_0=\wh\gc_0$, $\gg_0=\gc_0$, hence $\gr(\gg)$ contains the grading element $E$. By a classical result of Singer--Sternberg and Kac (see, e.g., \cite[Corollary 2.2]{CK}), the Lie algebra $\gg\cong \gr(\gg)$ is a trivial filtered deformation. 
Moreover, since $\gg$ is finite-dimensional, a classical result \cite[Proposition 3.2]{MorTan} of Morimoto and Tanaka implies that either $\gg=\gg_{-2}\oplus\gg_{-1}\oplus\gg_{0}$ or $\gg$ is isomorphic to the projective contact algebra $\mathfrak{sp}_4(\mathbb R)$. In the first case $\gg$ is simply transitive, whereas $\gg=\gg_{-2}\oplus\cdots\oplus\gg_{+2}$ for the projective contact algebra with
$\mathfrak{stab}=\gg^1=\gg_1\oplus\gg_2$ (since  \eqref{eq:ev-1} is the trivial map and by dimensional reasons). We now deal with the two cases simultaneously, with the understanding that the components of positive degree are absent in the simply transitive case.

By $3$-nondegeneracy assumption we can choose (unique up to a triangular transformation) elements $\eta\in\gq\setminus\gq^0$, $\epsilon\in\gq^0\setminus\gq^1$, $\xi\in\gq^1\setminus
\wh\stab$, where $\wh\stab=\gq\cap\overline\gq$ is the complexified stabilizer as usual, 
and write them as  $\eta=\sum_{-1\leq p\leq 2}\eta_p$, 
$\epsilon=\sum_{0\leq p\leq 2}\epsilon_p$, and $\xi=\sum_{0\leq p\leq 2}\xi_p$. By appropriately substracting elements from $$\wh\stab=\wh\gg^1=\wh\gg_1\oplus\wh\gg_2\;,$$ we may assume w.l.o.g. that 
$\eta=\eta_{-1}+\eta_0$, $\epsilon=\epsilon_0$, and $\xi=\xi_0$. 
Corollary \ref{cor:10} then tells us that
\begin{equation}
\eta=z+\eta_0\;,\quad\epsilon=z^2+\gamma z\overline z+\delta E\;,\quad \xi=\rho z\overline z+\tau E\;,
\end{equation}
by possibly rescaling $\eta$ and $\epsilon$. Here $\gamma,\delta,\rho,\tau\in\mathbb C$, with at least one of $\rho$ and $\tau$ non-zero. Since $\overline\xi=\overline\rho z\overline z+\overline\tau E\in\overline\gq^1\setminus
\wh\stab$ is not parallel to $\xi$, we see $\mathfrak{Im}(\rho\overline\tau)\neq 0$ and both $\rho$ and $\tau$ are non-zero. We may assume that $\eta_0\in\langle\overline z^2,E\rangle$ by substracting appropriate multiples of $\epsilon$ and $\xi$ to $\eta$, and that $\gamma=0$ by substracting an appropriate multiple of $\xi$ to $\epsilon$. We normalize $\xi$ with $\rho=1$. In summary, we arrive at
\begin{equation}
\eta=z+\alpha\overline z^2+\beta E\;,\quad\epsilon=z^2+\delta E\;,\quad \xi=z\overline z+\tau E\;,
\end{equation}
where $\alpha,\beta,\delta,\tau\in\mathbb C$, and $\mathfrak{Im}(\tau)\neq 0$. 

We now exploit that $\gq$ is a subalgebra of $\wh\gg$, by the integrability condition of CR manifolds. The bracket $[\eta,\epsilon]=\delta[z,E]+\alpha[\overline z^2,z^2]=\delta z+2i\alpha z\overline z$
is in $\gq$ and it has graded components of degree $-1$ and $0$. It can then be written as a linear combination of $\eta,\epsilon$ and $\xi$, and this readily implies the conditions
\begin{equation}
\label{eq:4.5}
\begin{aligned}
\alpha\delta&=0\;,\\
\beta\delta+2i\alpha\tau&=0\;.
\end{aligned}
\end{equation}
Similarly $[\eta,\xi]=[z,z\overline z]+\tau[z,E]+\alpha[\overline z^2,z\overline z]=\big(\tau-\tfrac{i}{2}\big)z+i\alpha\overline z^2$ and $[\epsilon,\xi]=[z^2,z\overline z]=-i z^2$ are elements in $\gq$, which says
\begin{equation}
\label{eq:4.6}
\begin{aligned}
\big(\tau-\tfrac{i}{2}\big)\alpha&=i\alpha\;,\\
\big(\tau-\tfrac{i}{2}\big)\beta&=0\;,\\
\delta&=0\;.
\end{aligned}
\end{equation}
Since $\tau\neq 0$, the system \eqref{eq:4.5}-\eqref{eq:4.6} is equivalent to $\delta=\alpha=\big(\tau-\tfrac{i}{2}\big)\beta=0$. In particular, we see that $\eta=\eta_{-1}+\eta_{0}=z+\beta E$, where $\eta_0=\beta E\in\gu_0$. This is the semi-aligned constraint, which in this case contradicts $3$-nondegeneracy: the iterated bracket
$
[[[\xi,\overline\eta],\overline\eta],\overline\eta]=-\big(\tau+\tfrac{i}{2}\big)\overline\beta^2 \overline z
$ belongs to $\wh\gg^{-1}=\gq+\overline\gq$, so $\xi\in\gq^2=\gq\cap\overline\gq=\wh\stab$.
In summary, this case cannot happen.
\vskip0.2cm\par\noindent
\underline{\it The map \eqref{eq:ev-1} has rank $1$.} 
There exists an element $\Xi\in\wh\gg^1$, with non-zero value $\ev_x(\Xi)\in\cF^{1}|_x$, such that $\ev_x(\Xi)$ and $\ev_x(\overline\Xi)$ are linearly dependent. In particular $\Xi\in\gq^1+\overline\gq^1$ but it is not in $\gq^1$ or in $\overline\gq^1$. %and $\ev_x(\Xi)=(1,e^{i\theta})$ for some $\theta\in\bR$, w.r.t. a basis of $\cF^{1}|_x=\cF^{1}_{10}|_x\oplus \cF^{1}_{01}|_x$ consisting of a pair of conjugate elements. 
We may then write it as the sum $\Xi=\Xi_{10}+\Xi_{01}$, $\Xi_{10}\in\gq^1\setminus \wh\stab$ and $\Xi_{01}\in\overline\gq^1\setminus\wh\stab$, with
\begin{equation}
\label{eq:the-value}
\begin{aligned}
\ev_x\bigl(\overline{\mathstrut\Xi_{01}}\bigr)&=\lambda\cdot\ev_x(\Xi_{10})\;,\\
\ev_x\bigl(\overline{\mathstrut\Xi_{10}}\bigr)&=\lambda\cdot\ev_x(\Xi_{01})\;.
\end{aligned}
\end{equation}
It is easy to see that $\lambda=e^{i\theta}$ for some $\theta\in[0,2\pi)$. We also note that $\Xi_{10}\in\wh\gg^0\setminus\wh\gg^1$ (otherwise the map \eqref{eq:ev-1} would be surjective), and similarly $\Xi_{01}\in\wh\gg^0\setminus\wh\gg^1$. %Moreover $\overline \Xi_{10}-\lambda\Xi_{10}, \overline \Xi_{01}-\lambda\Xi_{01}\in\wh\stab$.

The class $\llbracket \Xi\rrbracket\in\wh\gg_1$ is not in $\gu_1$ or $\overline\gu_1$ by Proposition \ref{prop:crucial} applied with $\gq^2=\gq\cap\overline\gq=\wh\stab$, in particular this class is non-trivial.
Finally, we emphasize that 
\begin{equation}
\label{eq:stabilizer-first}
\begin{aligned}
\wh\gg^2&\subset \gq^2+\overline\gq^2=\gq\cap\overline\gq=\wh\stab\;,\\
\wh\gg^1&\subset \langle\Xi\rangle\oplus\;\wh\stab\;,
\end{aligned} 
\end{equation}
by $3$-nondegeneracy and, respectively, since
the map \eqref{eq:ev-1} has rank $1$.

%then we are in a ``resonance'' case: there exists $\eta\in\wh\gg^1$, $\ev_x(\eta)\neq 0$, such that $\ev_x(\eta)$ and $\ev_x(\overline\eta)$ are proportional. In particular $\eta$ is neither in $\gq^1$ nor in $\overline\gq^1$, and $\ev_x(\eta)=(1,e^{i\theta})$ for some $\theta\in\bR$, w.r.t. a basis of $\cF^{1}|_x=\cF^{1}_{10}|_x\oplus \cF^{1}_{01}|_x$ consisting of a pair of conjugate elements. 
We consider any $\epsilon\in\gq^0\setminus\gq^1$ and $\xi\in\gq^1\setminus\wh\stab$. We note that $\epsilon,\xi\in\wh\gg^0$, and that $\epsilon\notin\wh\gg^1$ due to Proposition \ref{prop:basic-properties-filtration} and $\xi\notin\wh\gg^1$ too (otherwise \eqref{eq:ev-1} would be surjective).
We claim $3\leq\dim\wh\gg_0\leq 4$, with the equivalence classes $\llbracket\xi\rrbracket$, $\llbracket\epsilon\rrbracket$, $\llbracket\overline\epsilon\rrbracket$ in $\wh\gg_0\cong \wh\gg^0/\wh\gg^1$ linearly independent. Let us assume that
$
\lambda_1\llbracket\xi\rrbracket+\lambda_2\llbracket\epsilon\rrbracket+\lambda_3\llbracket\overline\epsilon\rrbracket=0
$, i.e., $\lambda_1\xi+\lambda_2\epsilon+\lambda_3\overline\epsilon\in\wh\gg^1$
and let us apply \eqref{eq:ev-1} so to get the value $\lambda_1\ev_x(\xi)+\lambda_2\ev_x(\epsilon)+\lambda_3\ev_x(\overline\epsilon)\in \cF^{1}|_x$. Since $\ev_x(\xi)\in\cF^{1}_{10}|_x$, we have that
$\lambda_2\ev_x(\epsilon)\in\cF^{1}_{10}|_x$ and $\lambda_3\ev_x(\overline\epsilon)\in 
\cF^{1}_{01}|_x$. It then follows from $\epsilon\notin\gq^1$ that $\lambda_2=\lambda_3=0$
and we finally infer that $\lambda_1=0$ as well, since $\lambda_1\llbracket\xi\rrbracket=0$ and $\xi\notin\wh\gg^1$. This proves $3\leq \dim\wh\gg_0\leq 4$.
\vskip0.2cm\par

If $\dim\wh\gg_0=4$, then $\gg\cong\gr(\gg)$ is a trivial filtered deformation, with 
$\wh\gg_{p}=\wh\gc_p$ for $p=-2,-1,0$. Moreover $\wh\gg_1\neq 0$, since the class $\llbracket \Xi\rrbracket$ is non-trivial, so $\gg=\gg_{-2}\oplus\cdots\oplus\gg_{+2}$ is isomorphic to the projective contact algebra by \cite[Proposition 3.2]{MorTan}. However $\gg_1=\mathfrak{z}_1\cong S^1(\gc_{-1})$ is the unique $\gk_0$-submodule that is complementary to $\gk_1$ inside $\gc_1$ (see the discussion before Definition \ref{def:13} and \cite[Proposition 3.2]{MorTan}), so that $\llbracket \Xi\rrbracket\in\wh\gg_1=\wh{\mathfrak{z}}_1\subset\gu_{1}\cap\overline\gu_{1}$. This is a contradiction.

%Proposition \ref{prop:crucial} 
Therefore the classes
$\llbracket\xi\rrbracket, \llbracket\epsilon\rrbracket, \llbracket\overline\epsilon\rrbracket$ in $\wh\gg_0$ are linearly independent, generate $\wh\gg_0$, and $\dim\wh\gg_0$ is exactly $3$.
In this case, Corollary \ref{cor:10}
tells us that
\begin{equation}
\label{eq:xiandepsilon-symbol}
\begin{aligned}
\llbracket\xi\rrbracket&=\alpha z\overline z+\beta E\;,\\
\llbracket\epsilon\rrbracket&=z^2+\gamma z\overline z+\delta E\;,
\end{aligned}
\end{equation}
by possibly rescaling $\epsilon$. Here $\alpha,\beta,\gamma,\delta\in\mathbb C$, with at least $\alpha$ or $\beta$ different from zero. If $\alpha\neq0$, then
$
[\llbracket\xi\rrbracket,\llbracket\epsilon\rrbracket]=[\alpha z\overline z,z^2]=i\alpha z^2
$ and $[z^2,\overline z^2]=-2i z\overline z$, so that $\wh\gg_0=\langle z^2,\overline z^2,z\overline z\rangle\cong\gsl_2(\mathbb C)$. Again \cite[Proposition 3.2]{MorTan} tells us that $\gr(\wh\gg)=\wh\gg_{-2}\oplus\wh\gg_{-1}\oplus\wh\gg_{0}$. %However 
%the class $\llbracket\eta\rrbracket$ of $\eta$ in $\wh\gg_1$ is non-zero, 
%
Thus $\dim(\wh\gg)=\dim(\gr\wh\gg)=6$, which is a contradiction.

We have thus shown that $\alpha=0$, $\wh\gg_0=\langle E,z^2+\gamma z\overline z,\overline z^2+\overline\gamma z\overline z\rangle$ and $\gg$ is a trivial filtered deformation. 
Now
$
[z^2+\gamma z\overline z,\overline z^2+\overline\gamma z\overline z]=-2iz\overline z-i\overline\gamma z^2-i\gamma\overline z^2\in\wh\gg_0
$,
so that $2i(\|\gamma\|^2-1)z\overline z\in\wh\gg_0$. Since $\dim\wh\gg_0=3$, we have that $\|\gamma\|^2=1$, $\gamma=e^{i\vartheta}$ for some $\vartheta\in[0,2\pi)$, and $\widehat\gg_0$ equals the Borel subalgebra of $\wh\gc_0\cong\ggl_{2}(\bC)$ given by
$$
\mathfrak{b}_{\vartheta}=\left\langle E, z^2+e^{i\vartheta}z\bar z, \bar z^2+e^{-i\vartheta}z\bar z\right\rangle\;.
$$ 
In this case \cite[Proposition 3.2]{MorTan} does not apply and we need much finer arguments, which we split in different claims.
We preliminary note that, by rescaling $z$ to $e^{-i\vartheta/2} z$, we may assume w.l.o.g. that $\vartheta=0$ and that 
\begin{equation*}
\wh\gg_0=\left\langle E, M, \overline M\right\rangle
\end{equation*}
%z^2+z\bar z, \bar z^2+z\bar z
 is the Borel subalgebra stabilizing the line of $z+\overline z$ in $\wh\gg_{-1}$.
\vskip0.2cm\par
{\it First claim: the stabilizer subalgebra is graded in positive degrees.}
\vskip0.2cm\par
By appropriately subtracting elements from $\wh\gg^2\subset\wh\stab$, we may take $\Xi_{10}$ and $\Xi_{01}$ in $\wh\gg_0\oplus\wh\gg_1$. Hence
$\Xi=\Xi_{10}+\Xi_{01}\in\wh\gg_1$, and
$$\wh\gg_1=\langle\Xi\rangle\oplus\,\big(\wh\stab\cap\wh\gg_1\big)\;.$$
By simple dimensional reasons, we arrive at $\wh\stab=\big(\wh\stab\cap\wh\gg_1\big)\oplus\wh\gg^2$.
In particular the stabilizer subalgebra is graded. (Since the grading element is not in the stabilizer subalgebra, this claim is {\it not} immediate.) 
\vskip0.2cm\par
{\it Second claim: the a priori estimate $1\leq\dim\wh\gg_1\leq 3$.}
\vskip0.2cm\par

We now make use of some results from \cite[Step 2, proof Thm. 6.1]{San}. 
Clearly, the space $\wh\gg_1$ is a non-trivial $\wh\gg_0$-submodule of
the first prolongation 
\begin{equation}
\label{eq:first-prol}
\widetilde\gg_1=\left\{X\in\wh\gc_1\mid[X,\wh\gc_{-1}]\subset \wh\gg_0\phantom{C^{C^C}}\!\!\!\!\!\!\!\!\!\!\!\right\}
\end{equation}
of the Borel subalgebra $\wh\gg_0$. The space \eqref{eq:first-prol} is $4$-dimensional, more precisely, it is generated by the elements
$\displaystyle
\left\{N,\overline N, V, W\!\!\!\!\!\!\!\!\!\!\!\!\!\phantom{C^{C^C}}\right\}
$,
where
\begin{align*}
V&=z^2\bar z+z\bar z^2+\frac{i}{2}z-\frac{i}{2}\bar z\;,\qquad\qquad\\
W&=z+\bar z\;,\\
N&=z^3+2z^2\bar z+z\bar z^2-3i z -3i\bar z\;.
\end{align*}
We note that $V=\overline V$, $W=\overline W$.
The element $N$ has already appeared in \S\ref{sec:3.1}, and it is characterized by the following property: it is the unique non-trivial element in $\widetilde\gg_1\cap(\gu_1\setminus\overline\gu_1)$ which commutes with its conjugate. 
The adjoint action of $\wh\gg_0$ on $\wt\gg_1$ is given by the obvious action of the grading element and the following formulae, together with their conjugates:
\begin{equation}
\label{eq:brackets}
\begin{aligned}
[M,N]&=-\tfrac 12 iN\;,\\
[\overline M,N]&=\tfrac 32 i N+i \overline N\;,\\
[M,V]&=-iN+\tfrac i2 V+\tfrac{5}{2}W\;,\\
[M,W]&=-\tfrac i2 W\;.
\end{aligned}
\end{equation}

By (ii) of Proposition \ref{prop:crucial} and $3$-nondegeneracy, we get the following chain of inclusions
\begin{align*}
\wh\stab\cap\wh\gg_1&\subset \big(\gu_1\cap\overline\gu_1\big)\cap\wh\gg_1\\
&\subset \big(\gu_1\cap\overline\gu_1\big)\cap\wt\gg_1 =\langle V,W\rangle
\end{align*}
and the a priori estimate $1\leq \dim\wh\gg_1\leq 3$. 
\vskip0.2cm\par
{\it Third claim: the element $\Xi=N+\overline N$.}
\vskip0.2cm\par

Since $\Xi=\llbracket \Xi\rrbracket$ is not in $\gu_1$ or in $\overline\gu_1$, we may write it as
$
\Xi=a N+b V+c W+d\overline N
$,
for some $a,b,c,d\in\mathbb C$ with $a$, $d$ non-zero. As already advertised below \eqref{eq:the-value}, we have $\overline\Xi-e^{i\theta}\Xi\in\wh\stab$
, and this translates in the condition $\overline d=e^{i\theta} a$. Normalizing $\Xi$ so that $a=1$, we finally arrive at
\begin{equation*}
\Xi=N+b V+c W+e^{-i\theta}\overline N\;.
\end{equation*}
%It follows that
%$
%\Xi\equiv z^3+(2+b+e^{-i\theta})z^2\overline z+(1+b+2e^{-i\theta})z\overline z^2+e^{-i\theta}\overline z^3\mod S^1(\wh\gc_{-1})
%$,
A direct computation then gives
\begin{align*}
[M,\Xi]&\equiv -i\big(\tfrac 12+b+e^{-i\theta}\big)z^3-\tfrac 32 i e^{-i\theta}\overline z^3\mod\langle z^2\overline z, z\overline z^2,z,\overline z\rangle\;,\\
[\overline M, \Xi]&\equiv \tfrac 32 i z^3+i\big(1+b+\tfrac 12 e^{-i\theta}\big)\overline z^3\;\;\;\;\;\;\,\,\mod\langle z^2\overline z, z\overline z^2,z,\overline z\rangle\;,
\end{align*}
and both terms have to be proportional to $\Xi\!\mod\langle z^2\overline z, z\overline z^2,z,\overline z\rangle=z^3+e^{-i\theta}\overline z^3$, since the map \eqref{eq:ev-1} has rank $1$. This leads to $b=1-e^{-i\theta}$ and $b=e^{-i\theta}-1$, that is, $b=0$ and $e^{-i\theta}=1$. Thus 
$\Xi=N+cW+\overline N$ and we then observe that
$
[M,\Xi]+\tfrac 32 i\Xi=i cW\in\wh\gg_1$. If $c=0$, then $\Xi=N+\overline N$. If $c\neq 0$, then $W\in\wh\gg_1$, so that $W\in \wh\stab\cap\wh\gg_1$ thanks to Proposition \ref{prop:crucial} applied with $\gq^2=\wh\stab$, and once more we may assume that $\Xi=N+\overline N$. 
\vskip0.2cm\par
{\it Fourth claim: the refined estimate $1\leq\dim\wh\gg_1\leq 2$.}
\vskip0.2cm\par

We established the chain of inclusions
\begin{equation}
\label{eq:inclusion-preliminary}
\langle\Xi\rangle\,\subset\, \wh\gg_1=\,\langle\Xi\rangle\oplus\,\big(\wh\stab\cap\wh\gg_1\big)\subset\, \langle\Xi, V,W\rangle\;,
\end{equation}
where $\Xi=N+\overline N$. If an element of the form $V+\delta W\in\wh\gg_1$, we may bracket it with $M$ and, using \eqref{eq:brackets}, get an element that contradicts the last inclusion in \eqref{eq:inclusion-preliminary}. In summary 
$$\wh\stab\cap\wh\gg_1\subset\,\langle W\rangle\quad\text{and}\quad
\wh\gg_1\subset\, \langle\Xi, W\rangle\;.
$$
Using \eqref{eq:brackets}, we see that the lines of $\Xi$ and $W$ are both $\wh\gg_0$-stable.
\vskip0.2cm\par\noindent
\vskip0.2cm\par
{\it Fifth and last claim: the spaces $\wh\gg_1=\langle\Xi\rangle$ and $\wh\gg^2=0$.}
\vskip0.2cm\par
By the discussion below \eqref{eq:xiandepsilon-symbol} on the equivalence classes of $\epsilon\in\gq^0\setminus\gq^1$ and $\xi\in\gq^1\setminus\wh\stab$, and by subtracting appropriate elements from $\wh\stab$, we may write
\begin{equation}
\begin{aligned}
\epsilon=\epsilon_0+\epsilon_1&=\big(M+\delta E\big)+\tau \Xi\;,\\
\xi=\xi_0+\xi_1&=E+\rho\Xi\;,
\end{aligned}
\end{equation}
by possibly rescaling $\xi$. Here $\delta,\tau,\rho\in\mathbb C$ and, by subtracting a multiple of $\xi$ to $\epsilon$, we set $\delta=0$. Finally, we may choose $\eta\in\gq\setminus\gq^0$ of the form 
\begin{equation}
\eta=\eta_{-1}+\eta_0+\eta_1=z+\mu\overline M+\nu\Xi\;,
\end{equation}
for some $\mu,\nu\in\mathbb C$. Since $\gq=\,\langle\eta,\epsilon,\xi\rangle\oplus\,\big(\wh\stab\cap\wh\gg_1\big)$ is a subalgebra and the components of degree $1$ of the brackets of elements in $\langle\eta,\epsilon,\xi\rangle$ are always parallel to $\Xi$, we see that in fact $\langle\eta,\epsilon,\xi\rangle$ is a subalgebra. 
 % A tad long but straightforward computation says 
A straightforward computation shows that this condition is equivalent to the following system of equations 
\begin{equation}
\label{eq:brackets-symmetries-q}
\begin{aligned}
\mu&=\tau\;,\\
 % 2i\nu&=-\tfrac i2\mu\big(\mu+3\tau\big)\;,\\
 % \mu&=-i\tfrac 32 \rho\;,\\
2\nu&=3i\rho\mu\;,\\
\tau&=-i\tfrac32 \rho\;.
\end{aligned}
\end{equation}
Its solution is given by $\tau=\mu$, $\nu=-\mu^2$, $\rho=\tfrac 23 i\mu$.

We now claim that $\wh\stab=0$. First of all, using the explicit expressions of the Lie brackets of $\wh\gg_{1}$ with $\wh\gg_{-1}$ in \cite[Step 2, proof Thm. 6.1]{San}, we see that 
\begin{align*}
[W,\eta]&=[W,z]+\mu[W,\overline M]-\mu^2[W,\Xi]
\equiv[W,z]%+\mu[W,\overline M]
\!\!\mod\wh\stab\\
&=\tfrac12 M-\tfrac 14 iE\\
&=\tfrac12 \epsilon+\tfrac 14 i\xi-\tfrac 12 i\overline\xi
\end{align*}
does not belong to $\gq$. This contradicts $W\in\wh\gg_1$, from which $\wh\gg_1=\langle\Xi\rangle$. Since its prolongation in degree $\geq 2$ is trivial by \cite[Step 3, proof Thm. 6.1]{San}, we finally see that $\wh\gg^2=0$ and $\wh\stab=0$. 

In summary, we have arrived to the CR algebras of Example \ref{ex:1-parameter-tris}. Those corresponding to $7$-dimensional manifolds are geometrically equivalent to the maximally homogeneous model.

%Now $\Xi_{10}\equiv\upsilon \xi\mod\wh\stab\cap\wh\gg_1$ and $\Xi_{01}\equiv\chi\overline\xi\mod\wh\stab\cap\wh\gg_1$, for some non-zero $\upsilon,\chi\in\mathbb C$. Actually $\Xi=\Xi_{10}+\Xi_{01}$, whence $\chi=-\upsilon$, 
%\begin{align*}
%\Xi&\equiv\upsilon\rho\Xi-\upsilon\overline\rho\overline\Xi\mod\wh\stab\cap\wh\gg_1\\
%&\equiv \upsilon\big(\rho-\overline\rho e^{i\theta})\Xi\mod\wh\stab\cap\wh\gg_1\;,
%\end{align*}
%and $\upsilon\big(\rho-\overline\rho e^{i\theta}\big)=1$. In particular $\rho\neq 0$.

\vskip0.2cm\par\noindent
\underline{\it The map \eqref{eq:ev-1} has rank $2$.} 
We are left to study the case where \eqref{eq:ev-1} is surjective. In this case, there exist $\eta\in\gq\setminus\gq^0$, $\epsilon\in\gq
^0\setminus\gq^1$ and $\xi\in\wh\gg^1\cap\gq^1$ such that $\ev_x(\xi)\neq 0$. (In particular
$\ev_x(\xi)$ and $\ev_x(\overline\xi)$ span the image of \eqref{eq:ev-1}.) Their equivalence classes $\llbracket\eta\rrbracket\in\wh\gg_{-1}$,
$\llbracket\epsilon\rrbracket\in\wh\gg_0$ and $\llbracket \xi\rrbracket\in\wh\gg_1$
are in $\gu$ but not in $\overline\gu$, by Corollary \ref{cor:10}. Moreover $$\wh\gg^2\subset\gq^2+\overline\gq^2=\gq\cap\overline\gq=\wh\stab\;,$$ 
so that $\wh\gg_p\subset\gu_p\cap\overline\gu_p$ for all $p\geq 2$.
\vskip0.2cm\par\noindent

If $E\in\gr(\gg)$, then one verifies directly that $\gr(\gg)$ is a $3$-nondegenerate $7$-dimensional homogeneous model  in the sense of Definition \ref{ukip} and \cite[Theorem 6.1]{San} says that there is only one such model, up to isomorphism. Since $E\in\gr(\gg)$, the Lie algebra $\gg$ is a trivial filtered deformation, so 
$\gg\cong\gr(\gg)$ is the graded Lie algebra described in \S\ref{sec:3.1}. If there is an identification $\gg\cong\gr(\gg)$ that is aligned, then
$\gq=\wh\gg\cap\gu$ by Corollary \ref{cor:stab!}, i.e., we obtain the CR algebra $(\gg,\gq)$ of the homogeneous model of \S\ref{sec:3.1}. In general, via appropriate normalizations as usual, we may write
\begin{align*}
\eta&=z+\beta E+\mu \overline M+\nu\overline N\;,\\
\epsilon&=M+\delta E+\tau \overline N\;,\\
\xi&=N\;,
\end{align*}
where $\beta,\mu,\nu,\delta,\tau\in\mathbb C$. The complexified stabilizer $\wh\stab$ is $1$-dimensional and generated by a non-zero element of the form $\lambda_1 E+\lambda_2 N+\lambda_3\overline N$, thanks to Proposition \ref{prop:crucial}. Clearly $\lambda_1\neq 0$, again by Proposition \ref{prop:crucial} and the fact that $\wh\gg_1\cap\gu_1\cap\overline\gu_1=0$. We normalize $\lambda_1=1$ and,
since the complexified stabilizer is stable by conjugation, we finally see that
$$\wh\stab=\langle E_o:=E+\lambda_2N+\overline\lambda_2\overline N\rangle\;,$$ 
where $\lambda_2\in\mathbb C$. In particular, we may substract appropriate multiples of $E_o$ and $\xi$ to $\eta$ and $\epsilon$ so to arrange for $\beta=\delta=0$.

We now compute 
\begin{align*}
[E_o,\epsilon]&=\tau\overline N+\lambda_2[N,M]
+\overline\lambda_2[\overline N,M]\\
&=\big(\lambda_2\tfrac i2+i\overline\lambda_2\big)N+\big(\tau+\tfrac32 i\overline\lambda_2\big)\overline N\;,
\end{align*}
which has to be an element of $\gq$. Hence $\tau+\tfrac32 i\overline\lambda_2=0$. If $\lambda_2=0$, then $\wh\stab=\langle E\rangle$. Since $\gq$ is stable under the adjoint action of the stabilizer, it is $\mathbb Z$-graded and equal to $\langle z,E,M,N\rangle$; i.e., $\mu=\nu=\tau=0$, and this is, again, our homogeneous model of \S\ref{sec:3.1}. If $\mu=\nu=0$, then $\gg$ is aligned in the sense of Definition \ref{def:aligned}, $\tau=0$ by Lemma \ref{lemma:semi-aligned} and then $\lambda_2=0$ again.
%Moreover $\gq\subset \wh\gg\cap\gu$ again by Lemma \ref{lemma:semi-aligned}, so $\wh\stab=\gq\cap\overline\gq\subset\wh\gg\cap\big(\gu\cap\overline\gu\big)=<E>$, which is actually an equality by dimensional reasons. We see that $\gq$ is $\mathbb Z$-graded and equal to $<z,E,M,N>$ -- this is, once more, our homogeneous model of \S\ref{sec:3.1}. 

We are then led to study the case where $\gq$ is generated by
\begin{align*}
\eta&=z+\mu \overline M+\nu\overline N\;,\\
\epsilon&=M-\tfrac32 i\overline\lambda_2 \overline N\;,\\
\xi&=N\;,\\
E_o&=E+\lambda_2 N+\overline\lambda_2\overline N\;,
\end{align*}
with $\lambda_2\neq 0$ and at least one of $\mu$ and $\nu$ non-zero as well. Using the Lie brackets \eqref{eq:brackets-model}, the fact that $\gq$ is a subalgebra turns out to be equivalent to the following system of equations:
 %\begin{align*}
%\mu=\tau&=-\tfrac 32 i\overline \lambda_2\;,\\
%2\nu-i\lambda_2\mu-\tfrac i2\mu\overline\lambda_2+\tfrac 32 i\operatorname{Im}(\lambda_2)\overline\lambda_2+\tfrac i2 \lambda_2\tau-2i\tau\overline\lambda_2&=0\;,\\
%i\mu-\tfrac 34\overline\lambda_2-\tfrac i2 \tau&=0\;,\\
%-\tfrac i2 \mu\tau+2i\nu+\tfrac 34 \tau\overline\lambda_2+2i\tau^2&=0\;.
 %\end{align*}
 %
 \begin{gather*}
\mu=-\tfrac 32 i\overline \lambda_2\;,\\
 % 2\nu+i\lambda_2\big(\tfrac 12 \tau-\mu\big)-i\overline\lambda_2\big(\tfrac 12\mu+2\tau\big)
 % +\tfrac 32 i\operatorname{Im}(\lambda_2)\overline\lambda_2&=0\;,\\
 % i\mu-\tfrac 34\overline\lambda_2-\tfrac i2 \tau&=0\;,\\
-\tfrac i2 \mu\tau+2i\nu+\tfrac 34 \tau\overline\lambda_2+2i\tau^2=0\;.
 \end{gather*}
We omit the somewhat long but straightforward check.  It solution is $\mu=\tau=-\tfrac 32 i\overline \lambda_2$, $\nu=-\tau^2$; in other words, we obtained the $1$-parameter family described in Example \ref{ex:1-parameter}. As explained there, this is nothing but the maximally symmetric homogeneous model in disguise.
\vskip0.2cm\par\noindent

If $E\notin\gr(\gg)$, then $\ga=\gr(\gg)\rtimes\mathbb RE$ is a $3$-nondegenerate $7$-dimensional homogeneous model in the sense of Definition \eqref{ukip} and again we may apply \cite[Theorem 6.1]{San}.
In this case $\dim\ga=8$ and $\dim(\gr\gg)=7$. Clearly $e_{-2}, z,\overline z\in\gr(\wh\gg)$ and $N,\overline N$ too, since $\llbracket\xi\rrbracket, \llbracket\overline\xi\rrbracket\in \wh\gg_1$ and $\dim\ga_1=2$. Since
\begin{equation}
\begin{aligned}
[N,z]&=-\frac{i}{2}M-\frac{3}{4}E\;,\quad [\overline N,\overline z]=\frac{i}{2}\overline M-\frac{3}{4}E\;,\\
[N,\overline z]&=-\frac{3}{2}iM-2i\overline M+\frac{3}{4}E\;,\quad
[\overline N,z]=\frac{3}{2}i\overline M+2i M+\frac{3}{4}E\;,
\end{aligned}
\end{equation}
we have $\wh\gg_0=\left\langle 2i M+3E, 2i\overline M-3E\right\rangle$ and $\gr(\gg)\cong \gsl_2(\bR)\ltimes S^3\bR^2$ as an abstract Lie algebra. 

We will now turn to show that the graded Lie algebra $\gr(\gg)$ is filtration rigid, i.e., $\gg\cong\gr(\gg)$ as filtered Lie algebra with filtrands $\gg^p=\bigoplus_{j\geq p}\gg_{j}$.
\vskip0.2cm\par\noindent
{\bf Second part of the proof.}
\vskip0.2cm\par\noindent
We now study the remaining simply-transitive case in more detail. The graded Lie algebra $\gr(\gg)\cong \gsl_2(\bR)\ltimes S^3\bR^2$
has components
$$
\gg_p=\begin{cases}
0\;\;\;\;\;\;\;\;\;\;\;\;\;\;\;\;\;\;\;\;\;\;\;\;\;\;\;\;\;\;\;\;\;\;\;\;\;\;\;\;\;\;\;\;\;\;\;\;\;\;\;&\text{for all}\;\;p>1\;,\\
\Re\left\langle N, \overline N\right\rangle\,\,\,\;\;\;\;\;\;&\text{for}\;\;p=1\;,\\
\Re\left\langle L,\overline L\right\rangle\,\;\;\;\;\;&\text{for}\;\;p=0\;,\\
\gc_{p}\,\;\;\;\;\;\;\;\;\;&\text{for}\;\;p=-2, -1\;,\\
%0\;\;\,\;\;\;\;\;\;\;\;&\text{for all}\;\;p>1\;,\\
\end{cases}
$$
where $L=2i M+3E$, and the following formulae (together with their conjugates) give the non-trivial structure relations of this Lie algebra:
\begin{equation}
\label{eq:brackets-smaller}
\begin{aligned}
[z,\overline z]&=-\frac{i}{2}e_{-2}\;,\quad [L,z]=-4z\;,\quad [L,\overline z]=2z-2\overline z\;,\quad
[L,e_{-2}]=-6e_{-2}\;,\\
[L,\overline L]&=-2L+2\overline L\;,\quad [N,e_{-2}]=-3i(z+\overline z)\;,\quad[N,z]=-\frac{1}{4}L\;,\\
[N,\overline z]&=-\frac{3}{4}L+\overline L\;,\quad [L,N]=4N\;,\quad [\overline L,N]=6N+2\overline N\;.
\end{aligned}
\end{equation}
Infinitesimal filtered deformations are governed by Spencer cohomology groups $H^{d,2}(\gg_-,\gr(\gg))$ in positive degrees $d$. The following two results come by direct computations, which we omit.
\begin{lemma}
\label{lem:cohomology}
The group $H^{d,2}(\gg_-,\gr(\gg))$ vanishes for $d=1$ and all $d>4$. On the other hand, we have:
\vskip0.3cm\par\noindent
\begin{itemize}
	\item[(i)]
$H^{2,2}(\gg_-,\gr(\gg))$ is $1$-dimensional and it is generated by the map $\psi^2:\gg_{-2}\otimes\gg_{-1}\to\gg_{-1}$ given by 
\begin{equation}
\psi^2(e_{-2},z)=i(z-\overline z)\;,\quad \psi^2(e_{-2},\overline z)=i(z-\overline z)\;.
\end{equation}
\item[(ii)] $H^{3,2}(\gg_-,\gr(\gg))$ is $2$-dimensional and it is generated by the maps $\psi^3_i:\gg_{-2}\otimes\gg_{-1}\to\gg_{0}$, $i=1,2$, given by 
\begin{equation}
\begin{aligned}
\psi^3_{1}(e_{-2},z)&=L\;,\quad\psi^3_{1}(e_{-2},\overline z)=-\overline L\;,\\
\psi^3_{2}(e_{-2},z)&=L+\overline L\;,\quad\psi^3_{2}(e_{-2},\overline z)=L+\overline L\;.
\end{aligned}
\end{equation}
\item[(iii)] $H^{4,2}(\gg_-,\gr(\gg))$ is $2$-dimensional and it is generated by the maps $\psi^4_i:\gg_{-2}\otimes\gg_{-1}\to\gg_{1}$, $i=1,2$, given by 
\begin{equation}
\begin{aligned}
\psi^4_{1}(e_{-2},z)&=N+7\overline N\;,\quad\psi^4_{1}(e_{-2},\overline z)=7 N+\overline N\;,\\
\psi^4_{2}(e_{-2},z)&=N+\overline N\;,\quad\psi^4_{2}(e_{-2},\overline z)=-(N+\overline N)\;.
\end{aligned}
\end{equation}
\end{itemize}
\end{lemma}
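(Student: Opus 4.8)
The plan is to read the Spencer groups $H^{d,2}(\gg_-,\gr(\gg))$ as the weight-$d$ homogeneous pieces of the Chevalley--Eilenberg cohomology $H^2(\gg_-;\gr(\gg))$, where $\gg_-=\gg_{-2}\oplus\gg_{-1}\cong\mathfrak{heis}(3)$ acts on the module $V=\gr(\gg)=\bigoplus_{p=-2}^{1}\gg_p$ by the restricted adjoint action read off from \eqref{eq:brackets-smaller}. Since cohomology dimensions are unchanged under extension of scalars, I would complexify once and for all and work with the basis $\{e_{-2},z,\overline z\}$ of $\wh\gg_-$ (with $[z,\overline z]=-\tfrac i2 e_{-2}$ and $e_{-2}$ central in $\wh\gg_-$) and the homogeneous bases $\{e_{-2}\}$, $\{z,\overline z\}$, $\{L,\overline L\}$, $\{N,\overline N\}$ of the graded pieces of $V$. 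Because $\gg_-$ is $3$-dimensional, all cochain spaces are small: $\Lambda^q\wh\gg_-$ has dimensions $1,3,3,1$, with $\Lambda^2\wh\gg_-=\langle z\wedge\overline z\rangle\oplus\langle e_{-2}\wedge z,\,e_{-2}\wedge\overline z\rangle$ of internal degrees $-2$ and $-3$, and $\Lambda^3\wh\gg_-=\langle e_{-2}\wedge z\wedge\overline z\rangle$ of internal degree $-4$.

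Next I would use the grading to localize the computation. A weight-$d$ cochain in $C^{2,d}$ sends $z\wedge\overline z$ into $\gg_{-2+d}$ and the pair $e_{-2}\wedge z,\,e_{-2}\wedge\overline z$ into $\gg_{-3+d}$; since $V$ lives in degrees $-2,\dots,1$, one gets $C^{2,d}=0$ unless $0\le d\le 4$, and likewise $C^{3,d}=\Hom(\Lambda^3\wh\gg_-,\gg_{-4+d})=0$ unless $2\le d\le 5$. In particular $H^{d,2}=0$ for every $d>4$ for free, and it remains to treat $d\in\{1,2,3,4\}$ (the range $d\le 0$ being irrelevant for filtered deformations). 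For each such $d$ the two coboundaries are the explicit finite-dimensional linear maps given, with $u\cdot v$ denoting the module action $[u,v]$, by
\[(\partial\phi)(u,v)=u\cdot\phi(v)-v\cdot\phi(u)-\phi([u,v])\,,\qquad \phi\in C^{1,d}\,,\]
and, using that $e_{-2}$ is central in $\gg_-$, by
\[(\partial\varphi)(e_{-2},z,\overline z)=e_{-2}\cdot\varphi(z,\overline z)-z\cdot\varphi(e_{-2},\overline z)+\overline z\cdot\varphi(e_{-2},z)\,,\qquad \varphi\in C^{2,d}\,,\]
with brackets and action taken from \eqref{eq:brackets-smaller}; then $H^{2,d}=\ker\!\big(\partial\colon C^{2,d}\to C^{3,d}\big)\big/\operatorname{im}\!\big(\partial\colon C^{1,d}\to C^{2,d}\big)$ is pure linear algebra.

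Carrying this out, for $d=1$ one has $C^{3,1}=0$, so every $2$-cochain is closed, and a direct rank computation shows $\partial\colon C^{1,1}\to C^{2,1}$ is surjective, whence $H^{2,1}=0$. For $d=2,3,4$ I would compute $\ker\partial$ on $C^{2,d}$ and $\operatorname{im}\partial$ from $C^{1,d}$ and read off the dimensions $1,2,2$. To exhibit the asserted generators concretely I would verify that the cochains $\psi^2$, $\psi^3_1,\psi^3_2$, $\psi^4_1,\psi^4_2$ — each supported on $\{e_{-2}\wedge z,\,e_{-2}\wedge\overline z\}$ and valued in $\gg_{-1}$, $\gg_0$, $\gg_1$ respectively — are cocycles (the single equation $\partial\varphi=0$, valued in the at most $2$-dimensional space $\gg_{-4+d}$), that they are not coboundaries, and that the stated families are linearly independent modulo $\operatorname{im}\partial$, which pins down the cohomology.

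The main obstacle is the bookkeeping in the cases $d=3$ and $d=4$: the nondiagonal structure relations in \eqref{eq:brackets-smaller}, notably $[L,\overline z]=2z-2\overline z$, $[\overline L,N]=6N+2\overline N$ and $[N,e_{-2}]=-3i(z+\overline z)$, couple the holomorphic and antiholomorphic generators, so the coboundary matrices are not block-diagonal and the cocycle/coboundary conditions must be tracked with care. Organizing the cochain spaces as modules over the (solvable) degree-$0$ part $\gg_0=\langle L,\overline L\rangle$ streamlines the reductions somewhat, but the decisive and most error-prone steps remain checking that each listed $\psi$ satisfies $\partial\varphi=0$ and that no nontrivial combination lies in $\operatorname{im}\partial$.
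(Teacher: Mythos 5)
Your proposal is correct and coincides with what the paper actually does: the paper states that Lemma \ref{lem:cohomology} follows ``by direct computations, which we omit,'' and your plan --- complexify, read $H^{d,2}(\gg_-,\gr(\gg))$ as the weight-$d$ piece of the Chevalley--Eilenberg cohomology of the Heisenberg algebra $\gg_-$ with values in $\gr(\gg)$, kill $d>4$ by the degree bounds on $C^{2,d}$, and reduce $d=1,\dots,4$ to explicit kernel/image computations for the two coboundary maps --- is exactly that omitted computation, organized in the standard way. Since the paper gives no further detail, there is nothing to contrast; just be sure, when you execute the linear algebra, to allow each representative its a priori component on $\Lambda^2\gg_{-1}$ (nonzero only for $d=2,3$) and to check the single closedness equation $(\partial\varphi)(e_{-2},z,\overline z)=0$ carefully against the brackets \eqref{eq:brackets-smaller}, as this is where sign errors are easiest to make.
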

\begin{lemma}
\label{lem:spectrum}
The spectrum of the adjoint action of the element $\widetilde E=-\tfrac{1}{4}\big(L+\overline L)$ on $\gr(\gg)$ is as follows:
\begin{align}
 \begin{array}{|c|c|c|c|c|c|c|}\hline
 e_{-2}& z+\overline z  & z-\overline z    &  L+\overline L & L-\overline L & N+\overline N^{\phantom{c^c}}\!\! & N-\overline N \\ \hline\hline
  3 &  1 &  2& 0 & -1 & -3 & -2 \\ \hline
 \end{array}
 \end{align}
\end{lemma}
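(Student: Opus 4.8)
The plan is to verify the assertion by a direct computation, since the statement is simply that the single real element $\widetilde E=-\tfrac14(L+\overline L)$ acts diagonalizably on $\gr(\gg)$ with the tabulated eigenvalues. First I would record that $\widetilde E$ is fixed by the conjugation defining the real form, $\overline{\widetilde E}=\widetilde E$, so that $\ad(\widetilde E)$ preserves $\gr(\gg)$ and, being $\bC$-linear, has the same spectrum on $\gr(\gg)$ as on its complexification $\gr(\gg)\otimes\bC$ with basis $\{e_{-2},z,\overline z,L,\overline L,N,\overline N\}$. It is worth noting that in the abstract description of \S\ref{sec:3.1} this $\widetilde E$ is exactly the semisimple Cartan generator of the Levi factor $\gsl_2(\bR)$, which is why an integral spectrum is to be expected.

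Next I would compute $\ad(\widetilde E)$ on each complex generator using the structure relations \eqref{eq:brackets-smaller} together with their conjugates, the latter obtained by applying the antilinear involution: for instance $[\overline L,z]=\overline{[L,\overline z]}=-2z+2\overline z$, $[\overline L,\overline z]=-4\overline z$, $[L,\overline N]=2N+6\overline N$, and $[\overline L,e_{-2}]=-6e_{-2}$. The point to anticipate is that $\ad(\widetilde E)$ is \emph{not} diagonal on the naive generators taken individually: one finds $\ad(\widetilde E)z=\tfrac32 z-\tfrac12\overline z$, the off-diagonal $\overline z$-term arising precisely from the conjugate bracket $[\overline L,z]$. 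This mixing cancels on the conjugation-symmetric combination and doubles on the antisymmetric one, so the correct eigenbasis consists of $e_{-2}$ together with the pairs $z\pm\overline z$, $L\pm\overline L$, $N\pm\overline N$.

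Concretely, the verifications reduce to seven one-line checks of the form
$\ad(\widetilde E)(z+\overline z)=-\tfrac14\bigl([L,z+\overline z]+[\overline L,z+\overline z]\bigr)=-\tfrac14(-4z-4\overline z)=z+\overline z$
and $\ad(\widetilde E)(z-\overline z)=-\tfrac14(-8z+8\overline z)=2(z-\overline z)$, and analogously $\ad(\widetilde E)(L\pm\overline L)=\{0,-1\}\cdot(L\pm\overline L)$, $\ad(\widetilde E)(N\pm\overline N)=\{-3,-2\}\cdot(N\pm\overline N)$, while $[L,e_{-2}]=[\overline L,e_{-2}]=-6e_{-2}$ forces $\ad(\widetilde E)e_{-2}=3e_{-2}$. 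Since the seven listed vectors are linearly independent and span $\gr(\gg)\otimes\bC$, this exhibits $\ad(\widetilde E)$ as diagonalizable with the claimed spectrum, and by reality the same eigenvalues occur on $\gr(\gg)$ itself (the antisymmetric vectors $z-\overline z$, etc., being convenient complex representatives of the purely imaginary real elements $i(z-\overline z)$, etc.).

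The step requiring the most care — rather than a genuine obstacle — is the correct bookkeeping of the conjugate brackets and the confirmation that the cross terms cancel on the symmetric combinations and reinforce on the antisymmetric ones; there is no conceptual difficulty, the entire content residing in the explicit coefficients of \eqref{eq:brackets-smaller}. I would therefore display only the symmetric and antisymmetric computations and suppress the intermediate expansions on the individual generators $z,\overline z,L,\overline L,N,\overline N$.
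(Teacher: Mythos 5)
Your proposal is correct and is exactly the argument the paper intends: the paper states that Lemma \ref{lem:spectrum} follows ``by direct computations, which we omit,'' and your computation supplies precisely those omitted details, with all conjugate brackets (e.g.\ $[\overline L,z]=-2z+2\overline z$, $[L,\overline N]=2N+6\overline N$) and the resulting eigenvalues on $e_{-2}$, $z\pm\overline z$, $L\pm\overline L$, $N\pm\overline N$ checking out against the table. Your side remark identifying $\widetilde E=-\tfrac14(L+\overline L)$ with the Cartan generator $-\tfrac i2(M-\overline M)-\tfrac32E$ of the Levi factor from \S\ref{sec:3.1} is also accurate.
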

It then follows that all the cocycles displayed in Lemma \ref{lem:cohomology} are rescaled by the action of $\widetilde E$, and the rescaling is never trivial (the eigenvalues are $-2$, $-5$, $-4$, $-7$, and $-8$, respectively). 
Using Lemmas \ref{lem:cohomology}-\ref{lem:spectrum}, we now set to prove the following.
\vskip0.1cm\par\noindent
\begin{proposition}
The graded Lie algebra $\gr(\gg)\cong \gsl_2(\bR)\ltimes S^3\bR^2$ is filtration rigid.
\end{proposition}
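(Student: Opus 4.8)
The plan is to run the standard obstruction theory for filtered deformations, which (as already noted) is governed by the positive-degree Spencer cohomology groups $H^{d,2}(\gg_-,\gr(\gg))$, and to kill every obstruction using the semisimple element $\widetilde E$. Fixing an identification of filtered vector spaces $\gg\cong\gr(\gg)$, the deformed bracket reads $[\cdot,\cdot]'=[\cdot,\cdot]_0+\sum_{d\ge 1}\mu_d$, where $[\cdot,\cdot]_0$ is the graded bracket and $\mu_d$ raises the contact degree by $d$. If the deformation is nontrivial, let $\mu_D$ with $D\ge 1$ be its lowest nonvanishing term; the Jacobi identity at lowest order forces $\partial\mu_D=0$, so the leading obstruction $[\mu_D]\in H^{D,2}(\gg_-,\gr(\gg))$ is well defined. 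By Lemma \ref{lem:cohomology} these groups vanish for $d=1$ and for $d>4$, so necessarily $D\in\{2,3,4\}$.

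Next I would exploit $\widetilde E$ to constrain the weight of $\mu_D$. Since $\widetilde E$ has contact degree $0$ and, by Lemma \ref{lem:spectrum}, acts on $\gr(\gg)$ semisimply with integer eigenvalues, the line $\bR\widetilde E$ is a reductive subalgebra acting on $\gg$ completely reducibly; consequently the identification $\gg\cong\gr(\gg)$ may be chosen $\widetilde E$-equivariantly, so that the adjoint action of $\widetilde E$ on $\gg$ coincides with its graded counterpart and splits the filtration into $\widetilde E$-eigenspaces. As $\widetilde E$ is then an inner derivation of the deformed bracket, each deformation cochain is $\widetilde E$-invariant, i.e. $\mathcal{L}_{\widetilde E}\mu_d=0$ for all $d$. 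In particular $[\mu_D]$ lies in the weight-zero subspace of $H^{D,2}(\gg_-,\gr(\gg))$ for the induced action of $\mathcal{L}_{\widetilde E}$.

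The conclusion is then purely numerical. The derivative $\mathcal{L}_{\widetilde E}$ commutes with the Spencer differential, hence acts on each $H^{d,2}(\gg_-,\gr(\gg))$, and by the eigenvalue computation recorded after Lemmas \ref{lem:cohomology}--\ref{lem:spectrum} this action is diagonalizable with eigenvalue $-2$ on $H^{2,2}$, eigenvalues $\{-5,-4\}$ on $H^{3,2}$, and $\{-7,-8\}$ on $H^{4,2}$. None of these is zero, so the weight-zero subspace of every positive-degree group is trivial; combined with the previous paragraph this forces $[\mu_D]=0$. Thus $\mu_D=\partial b$ is a coboundary, and since $\mathcal{L}_{\widetilde E}$ is a chain map and $\mu_D$ has weight zero, $b$ may be taken of weight zero as well, so the associated filtered change of coordinates remains $\widetilde E$-equivariant and raises the order of the leading deformation term to $\ge D+1$. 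Iterating finitely many times, and using that there is no obstruction beyond degree $4$, the deformation is completely trivialized and $\gg\cong\gr(\gg)$ as filtered Lie algebras, which is the asserted rigidity.

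I expect the delicate point to be the middle step: guaranteeing that $\widetilde E$ lifts compatibly so that all deformation cochains acquire weight zero. Unlike the case of a genuine grading element $E$ -- which acts by the single scalar $d$ on the whole of $C^{d,2}(\gg_-,\gr(\gg))$ and makes rigidity automatic once the cohomology is concentrated in positive degrees -- here $\widetilde E$ carries mixed weights, and what rescues the argument is precisely that its weight-zero eigenspace on cohomology is empty in each relevant degree. I would derive the equivariant identification from complete reducibility of the reductive action of $\bR\widetilde E$, so that the filtration splits $\widetilde E$-equivariantly, the remaining verifications being routine bookkeeping with weights; finiteness of the induction is ensured by $3$-nondegeneracy, which bounds the contact degrees occurring in $\gg$.
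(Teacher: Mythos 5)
Your overall strategy coincides with the paper's first argument: normalize the filtered deformation to be equivariant under a semisimple lift of $\widetilde E$ (possible because $\ad\widetilde E$ has simple spectrum on $\gr(\gg)$), observe that every deformation cochain then has weight zero, and kill the leading obstruction because the $\widetilde E$-eigenvalues on $H^{2,2}$, $H^{3,2}$, $H^{4,2}$ are $-2$, $\{-5,-4\}$, $\{-7,-8\}$, all nonzero. Up to that point the reasoning is sound and matches the paper.

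There is, however, a genuine gap in the absorption/iteration step. The groups $H^{d,2}(\gg_-,\gr(\gg))$ only control the restriction of the leading term $\mu_D$ to $\gg_-\wedge\gg_-$; once you write $\mu_D|_{\gg_-\wedge\gg_-}=\partial b$ and adjust the splitting, nothing in your argument disposes of the components of $\mu_D$ having at least one entry in $\gg_{\geq 0}$. These are not a phantom worry here. In the basis $X,\widetilde E,Y,v_0,\dots,v_3$ used in the paper (with $v_0\in\gg_{-2}$, $X,v_1\in\gg_{-1}$, $\widetilde E,v_2\in\gg_0$, $Y,v_3\in\gg_1$), the candidate deformation terms $[v_0,v_2]=\lambda_0X$, $[v_0,v_3]=\lambda_1\widetilde E$, $[v_1,v_2]=\lambda_2\widetilde E$, $[v_1,v_3]=\lambda_3Y$ are weight-zero for $\widetilde E$, raise the filtration degree by exactly $1$, and vanish on $\gg_-\wedge\gg_-$; since $H^{1,2}(\gg_-,\gr(\gg))=0$, they are entirely invisible to your obstruction spaces, yet they are a priori admissible deformations. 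The paper closes this gap in two ways: abstractly, by verifying the almost full prolongation condition $\Hom_{\mathfrak t}\big(H^{d,1}(\gg_-,\gr(\gg)),\gg_1\big)=0$ for all $d\geq 1$, so that \cite[Cor.~2.3]{CK} trivializes the whole deformation and not merely its negative part; and concretely, by listing the surviving weight-compatible terms $\lambda_0,\dots,\lambda_3$ and eliminating them with Jacobi identities such as $\operatorname{Jac}(Y,v_0,v_1)=-2\lambda_0X$ and $\operatorname{Jac}(X,v_2,v_3)=-2\lambda_3Y$. Your induction does not close without one of these supplements.
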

By the first part of the proof, we need to consider filtered deformations of $\gr(\gg)\cong\gsl_2(\bR)\ltimes S^3\bR^2$. We first note that $\gr(\gg)$ is an almost full prolongation (of degree $1$) in the sense of \cite{CK}. In fact
$$
\Hom_{\mathfrak t}\big(H^{d,1}(\gg_-,\gr(\gg)),\gg_1)=0\;\;\text{for all}\;\;d\geq 1\;,
$$ 
where $\mathfrak t=\langle \widetilde E \rangle$ is the maximal reductive subalgebra of $\gg_0$. We omit the straightforward check, which uses the eigenvalues of $\widetilde E$ on Spencer cochains and the brackets \ref{eq:brackets-smaller}. 

Now, it is well-known that the restriction to $\gg_-$ of the first non-zero contribution of a filtered deformation is a cohomology class in positive degree which is $\gg_0$-invariant (see \cite[Prop. 2.2]{CK}) and that, in case of a coboundary, this can be absorbed via redefinition of the complementary subspaces in the chain of filtrands (see \cite[Prop. 2.3]{CK}). By Lemmas \ref{lem:cohomology}-\ref{lem:spectrum}, this is our case. Being an almost full deformation, the same is true for all the contributions: we may apply \cite[Cor. 2.3]{CK} and infer that $\gr(\gg)\cong\gsl_2(\bR)\ltimes S^3\bR^2$ has no non-trivial filtered deformations. The reader interested in more explicit details for these last steps may use the following argument.
\vskip0.1cm\par\noindent

Let $X,\widetilde E,Y$ be the standard basis of the Levi factor $\mathfrak{sl}_2(\mathbb R)$ of $\gr(\gg)$ as in \S\ref{sec:3.1} and $v_0,v_1,v_2,v_3$ the basis of the $4$-dimensional radical corresponding to the elements
$x^3,x^2y,xy^2,y^3$ in $S^3\R^2$. In this basis the contact filtration is the following:
 $$
\gg^{-2}=\gg,\ \gg^{-1}=\gsl_2(\bR)\ltimes \langle v_1,v_2,v_3\rangle,\
\gg^0=\langle \widetilde E,Y,v_2,v_3\rangle,\ \gg^1=\langle Y,v_3\rangle.
 $$
In a filtered deformation of $\gr(\gg)$, we are allowed to modify the brackets 
$[\xi,\eta]$ for $\xi\in\gg^i$, $\eta\in\gg^j$ by terms from $\gg^{i+j+1}$.
We can modify $\xi$ by $\gg^{i+1}$ and $\eta$ by $\gg^{j+1}$, and if this restores the graded brackets,
then the algebra is filtration rigid.

Since $\widetilde E$ has a simple spectrum, the filtered deformation can be assumed to be compatible with it via a redefinition of the complementary subspaces in the chain of filtrands. In order to preserve the Jacobi Identities, the deformations of the brackets should not only respect the filtration but also the grading w.r.t. the adjoint action of $\widetilde E$. With these restrictions, one can directly see that no deformation terms exist for the Lie brackets of 
$\gsl_2(\bR)$ with itself as well as with $S^3\bR^2$, i.e., both the Levi factor and its
representation are filtration rigid. On the other hand, the Lie brackets of $S^3\bR^2$ with itself do admit possible non-trivial deformations terms, which are indicated by the real parameters $\lambda_i$ in the last line:
 \begin{gather*}
[\widetilde E,X]=2X,\ [\widetilde E,Y]=-2Y,\ [X,Y]=\widetilde E,\\ 
[\widetilde E,v_0]=3v_0,\ [\widetilde E,v_1]=v_1,\ [\widetilde E,v_2]=-v_2,\ [\widetilde E,v_3]=-3v_3,\\
[X,v_1]=v_0,\ [X,v_2]=2v_1,\ [X,v_3]=3v_2,\ [Y,v_0]=3v_1,\ [Y,v_1]=2v_2,\ [Y,v_2]=v_3,\\ 
[v_0,v_2]=\lambda_0X,\ [v_0,v_3]=\lambda_1 \widetilde E,\ [v_1,v_2]=\lambda_2 \widetilde E,\ [v_1,v_3]=\lambda_3Y. 
 \end{gather*}
It turns out that the Jacobi Identities rule out all of them, that is, $\lambda_i=0$ for all $i=1,\ldots 3$.
For instance, the following Jacobi Identities $\operatorname{Jac}(Y,v_0,v_1)=-2\lambda_0 X$, $\operatorname{Jac}(X,v_2,v_3)=-2\lambda_3 Y$, $\operatorname{Jac}(v_0,v_2,v_3)=-(3\lambda_0+\lambda_1)v_2$, $\operatorname{Jac}(v_1,v_2,v_3)=(3\lambda_2+\lambda_3)v_3$
imply the desired result. Here $\operatorname{Jac}$ is the Jacobiator, which measures the failure of the brackets to satisfy the Jacobi Identity.

Again, we conclude that $\gr(\gg)$ is filtration rigid.
\vskip0.2cm\par\noindent
{\bf Third and last part of the proof.}
\vskip0.2cm\par\noindent
If there exists an identification $\gg\cong \gr(\gg)$ that is aligned in the sense of Definition \ref{def:aligned}, then the complex structure is completely determined by part (1) of Theorem \ref{thm:25} (see also Corollary \ref{cor:stab!}): we have a simply-transitive subalgebra of the full CR algebra of infinitesimal automorphisms of the maximally symmetric homogeneous model. 

Otherwise we consider $\eta\in\gq\setminus\gq^0$, $\epsilon\in\gq
^0\setminus\gq^1$ and $\xi\in\wh\gg^1\cap\gq^1$ such that $\ev_x(\xi)\neq 0$. Their equivalence classes 
$\llbracket \eta\rrbracket\in \wh\gg_{-1}$, $\llbracket\epsilon\rrbracket\in\wh\gg_0$ and $\llbracket \xi\rrbracket\in\wh\gg_1$
are in $\gu$ but not in $\overline\gu$, by Corollary \ref{cor:10}. Via appropriate normalizations, we may write
\begin{align*}
\eta&=z+\mu \overline L+\nu\overline N\;,\\
\epsilon&=L+\tau \overline N\;,\\
\xi&=N\;,
\end{align*}
for some $\mu,\nu,\tau\in\mathbb C$. Since $\wh\stab=0$ by dimensional reasons, these three vectors generate $\gq$. Using \eqref{eq:brackets-smaller}, we see that $\gq$ is a subalgebra if and only if
$\tau=8\mu$ and $2\mu\tau-10\nu+\tau^2=0$, i.e., $\tau=8\mu$ and $\nu=8\mu^2$. In other words, we obtained the $1$-parameter family of Example \ref{ex:1-parameter-bis} and 
it was shown in \S\ref{sec:section-added} that this is 
the maximally symmetric homogeneous model. 
\vskip0.2cm\par\noindent
{\bf End of the proof.}
\vskip0.2cm\par\noindent

\section{Global models}\label{sec:3.3}

We now complete the proof of the second part of Theorem \ref{T2} concerning the global behaviour of the model. 
We need the universal cover of the automorphism group, but since universal covers of disconnected groups 
are not well-known, we will first discuss them here.

\subsection{The universal cover of $GL_2(\R)$}\label{sec:3.3-prerequisite}

For any Lie group $G$, its connected component $G^o$ of the unity $E$ is a normal subgroup, so
$\pi_0(G)=G/G^o$ can be considered as a discrete group. Moreover, the fundamental group $\pi_1(G^o)$ is Abelian, so we will use additive notation for it. 
In what follows, we are going to relax the connectedness assumption for the universal cover. 

The universal cover of $SL_2(\R)$ is well-known, it is a non-algebraic simple Lie group with Lie algebra
$\mathfrak{sl}_2(\R)$. In the same vein, we define the universal cover of the connected Lie group
$GL^+_2(\R)=\{A\in\operatorname{End}(\R^2):\det A>0\}\cong SL_2(\R)\times\R_+$: it is the collection
$$
\widetilde{GL_2^+(\R)}=\left\{[\gamma]\;|\;\gamma:[0,1]\rightarrow GL_2^+(\R)\;\text{s.t.}\;\gamma(0)=E\right\}
$$
of continuous paths in $GL_2^+(\R)$ with starting point the unity, up to homotopy with both ends fixed. The group structure is given by the pointwise multiplication of paths 
$
[\gamma_1]\cdot [\gamma_2]=[\gamma_1\cdot\gamma_2]
$,
with $(\gamma_1\cdot\gamma_2)(t)=\gamma_1(t)\cdot\gamma_2(t)$ for all $t\in[0,1]$.

To extend this construction to $GL_2(\R)=GL^+_2(\R)\sqcup GL^-_2(\R)$, we define $E_1=E=\operatorname{diag}(1,1)$,
$E_{-1}=\operatorname{diag}(1,-1)$, and note that we have a group homomorphism 
$$\pi_0(GL_2(\R))\cong\bZ_2=\{\pm 1\}\to GL_2(\R)$$
given by $\epsilon\mapsto E_\epsilon$. We can then perform the universal cover 
$\widetilde{GL_2(\R)}=\widetilde{GL^+_2(\R)}\sqcup\widetilde{GL^-_2(\R)}$
for the two connected components and lift the multiplication operations. Explicitly

$$\widetilde{GL_2^-(\R)}=\left\{[\gamma]\;:\;\gamma:[0,1]\rightarrow GL_2^-(\R)\;\text{s.t.}\;\gamma(0)=E_{-1}\right\}$$ 
and the group structure for $\widetilde{GL_2(\R)}$ is again given by the pointwise multiplication of paths. 
(We note that it is well-defined since $E_{-1}$ squares to $E$). 
\begin{remark}\hfill
\begin{enumerate}
	\item 
	Obstructions controlling 
the existence of a group structure on the universal cover for disconnected groups $G$ rely on the fact that one has to specify multiplication of paths 
living on different connected components. The obstructions are due to the work \cite{Tay} by R.\,L.\ Taylor and belong to $H^k(\pi_0(G),\pi_1(G^o))$ for $k=3$. See also \cite{RVW}. In our case $G=GL_2(\R)$, we have that $\pi_1(G^o)\cong\bZ$ is the nontrivial
representation of $\pi_0(G)\cong\bZ_2$, and one can compute that the above cohomology groups vanish for $k$ even and are isomorphic to $\bZ_2$ for $k$ odd. In particular the group $H^3(\pi_0(G),\pi_1(G^o))$ is non-trivial, yet the obstruction class is trivial, since we made explicit its group structure.
On a more general level, whenever the natural exact sequence 
${\bf 1}\rightarrow G^o\rightarrow G\rightarrow\pi_0(G)\rightarrow{\bf 1}$ admits a splitting of groups, such an obstruction vanishes.
\item
According to \cite[(6.5)]{Tay} 
the cohomology group $H^2(\pi_0(G),\pi_1(G^o))$ acts simply transitively on the space of group coverings, therefore the covering is unique for $G=GL_2(\R)$. 
\end{enumerate}

\end{remark}
We have an epimorphism $p:\widetilde{GL_2(\R)}\to GL_2(\R)$, $[\gamma]\mapsto \gamma(1)$.
This is a covering map with deck transformation group 
\begin{equation}
\label{eq:deck}
p^{-1}(E)=\pi_1(GL_2^+(\R))\cong\bZ\;,
\end{equation}
a discrete normal subgroup of $\widetilde{GL_2(\R)}$. 
%Since the manifold  contracts to $SO(2)$, 
The generators of \eqref{eq:deck} are given by the fundamental group of $SO(2)=U(1)$, i.e.,
$p^{-1}(E)=\{[\gamma_k]\}$ with paths 
$\gamma_k(t)=e^{2\pi k i t}$, $t\in[0,1]$, for all $k\in\bZ$.
(Every time we write a complex $1\times1$ matrix we mean the corresponding real $2\times2$ matrix.) As explained later, the deck transformation group $p^{-1}(E)$ is not central in the universal cover group. 

In summary, the space $\widetilde{GL_2(\R)}$ has two connected components, both simply connected: 
\begin{equation}
\label{eq:pi01}
\pi_0(\widetilde{GL_2(\R)})\cong\bZ_2,\quad\pi_1(\widetilde{GL^+_2(\R)},E_1)=0,\quad\pi_1(\widetilde{GL^-_2(\R)},E_{-1})=0\;,
\end{equation} where we identified
$E_{\pm1}$ with the corresponding constant paths. We have the following exact sequence of group homomorphisms
 \begin{equation}\label{exseq1}
1\longrightarrow\bZ\longrightarrow\widetilde{GL_2(\R)}\longrightarrow GL_2(\R)\longrightarrow 1
 \end{equation}
and similarly for $GL^+_2(\R)$. The sequence does not split, 
by the above \eqref{eq:pi01} on $\pi_0(\widetilde{GL_2(\R)})$.
%on the connected components of $\widetilde{GL_2(\R)}$.
%(to which the homomorphism from the last term is given by $\det$).

Sequence \eqref{exseq1} has the following retract
 \begin{equation}\label{exseq3}
1\longrightarrow\bZ\longrightarrow\widetilde{O(2)}\longrightarrow O(2)\longrightarrow1.
 \end{equation}
Here $O(2)=S^1_+\sqcup S^1_-$
is the non-Abelian $1$-dimensional group with two connected components and group
operation $(e^{i\varphi_1},\epsilon_1)\cdot(e^{i\varphi_2},\epsilon_2)=
(e^{i(\varphi_1+\epsilon_1\varphi_2)},\epsilon_1\epsilon_2)$. 
(The sign in $S^1_\epsilon$ is, of course, nothing but the determinant.) 
Its universal cover is $\widetilde{O(2)}=\R^1_+\sqcup\R^1_-$ with the group 
operation $(\varphi_1,\epsilon_1)\cdot(\varphi_2,\epsilon_2)=
(\varphi_1+\epsilon_1\varphi_2,\epsilon_1\epsilon_2)$ and
we again have $\pi_0(\widetilde{O(2)})=\pi_0(O(2))\cong\bZ_2$.

The center of $GL_2(\R)$ is $\mathcal{Z}=\{\operatorname{diag}(a,a):a\in\bR_\times\}
\subset GL^+_2(\R)$, and its preimage via the covering map is the subgroup
 $
\widetilde{\mathcal{Z}}=
\{[\gamma_k^a]:a\neq0,k\in\bZ\}
 $ of $\widetilde{GL_2^+(\R)}$, where the paths are
\begin{equation*}
\gamma_k^a(t)=\left\{\begin{array}{ll}
e^{2\pi k i t+t\ln(a)} & \text{ for }a>0,\\
e^{2\pi (k+\tfrac12) i t+t\ln(-a)} & \text{ for }a<0.
\end{array}\right. 
 \end{equation*}
Consequently, $\widetilde{\mathcal{Z}}$ is central in $\widetilde{GL^+_2(\R)}$ but due to 
 \begin{align*}
E_{-1}\gamma_k^aE_{-1}&=\gamma^a_{-k} \quad\text{ for }a>0\;,\\
E_{-1}\gamma_k^aE_{-1}&=\gamma^a_{-k-1} \,\text{ for }a<0\;,
 \end{align*}
it is only normal in $\widetilde{GL_2(\R)}$. In particular, 
the action of $\pi_0(GL_2(\R))\cong \mathbb Z_2$ on $\pi_1(GL_2^+(\R))\cong\mathbb Z$ is non-trivial and the deck transformation group is not central.
(In fact, it is actually easy to see that the center of the universal cover is formed by the paths $[\gamma_k^a]$ with $a>0$ and $k=0$.)

We conclude this section with the following crucial observation on $1$-dimensional subgroups.
The subgroup $H=\{\operatorname{diag}(a^2,1/a):a\in\R_\times\}=\{\operatorname{diag}(e^{2\lambda},\epsilon e^{-\lambda}):\lambda\in\mathbb R, \epsilon=\pm 1\}\cong\mathbb R_+\times\mathbb Z_2$ determines the following subsequence of \eqref{exseq1}:
 \begin{equation}\label{exseq2}
1\longrightarrow\bZ\longrightarrow\widetilde{H}\longrightarrow H\longrightarrow1\;,
 \end{equation} 
where 
$\widetilde{H}=p^{-1}(H)=\{
[\gamma_{\lambda,\epsilon,k}(t):=\gamma_k(t)\cdot\operatorname{diag}(e^{2\lambda t},\epsilon e^{-\lambda t})]:\lambda\in\R,\epsilon=\pm1,k\in\bZ\}\subset \widetilde{GL_2(\R)}$. 
%$\Lambda_\lambda^\epsilon=\operatorname{diag}(e^{2\lambda},\epsilon e^{-\lambda})$
This subsequence splits, using the above formula for the paths $\gamma_{\lambda,\epsilon,k}(t)$ with $k=0$,
thus 
$$\widetilde{H}\cong H\times\bZ\cong \bR_+\times\mathbb Z_2\times \bZ$$
as a Lie group.
Since $\pi_0(H)\cong\bZ_2$, we get $\pi_0(\widetilde{H})\cong\bZ_2\times\bZ$, with the natural component group homomorphism $\widetilde H\to \pi_o(\widetilde H)$ simply given by
$[\gamma_{\lambda,\epsilon,k}(t)]\mapsto(\epsilon,k)$.

 \subsection{Proof of the main results: global models}

Let us now integrate the Lie algebra of infinitesimal CR automorphisms $\gg=\ggl_2(\R)\ltimes\R^4$, $\mathbb R^4\cong S^3\mathbb R^2$, to the connected Lie group
$G^o=GL_2^+(\R)\ltimes\R^4$, 
and the stabilizer subalgebra
$\mathfrak h=\stab=\langle\operatorname{diag}(-2/3,1/3)\rangle\subset\frak{gl}_2(\mathbb R)$ 
to the closed connected subgroup
$H^o=\{\operatorname{diag}(a^2,1/a):a\in\mathbb R_+\}$. Let us first note that the center of $\gg$ is trivial and that the group of inner automorphisms of $\gg$ is precisely $G^o=GL_2^+(\R)\ltimes\R^4$. In particular any other connected Lie group with Lie algebra $\gg$ covers $G^o$ and it is a quotient of the universal cover $\widetilde {G^o}$ of $G^o$ by a discrete central subgroup. 
The Zariski closure yields
$G=GL_2(\R)\ltimes\R^4$ and  $H=\{\operatorname{diag}(a^2,1/a):a\in\mathbb R_\times\}$ -- the quotient is the same manifold
$\cM^7=G/H=G^o/H^o$, with the action of the full $G$ that is still effective (since $GL_2(\R)$ acts effectively on $\bR^4\cong S^3\R^2$).
Passing to $G/H^o$ yields a disconnected manifold, which we do not allow in our analytic setup.

Since the model is locally unique, all other global models are obtained by coverings and by quotients.
The quotient of $G$ by a discrete normal subgroup is in fact not possible because such a subgroup would project
along the nilradical to a discrete normal subgroup in $GL_2(\R)$, which is central, and so is 
$\bZ_2=\{\pm E\}$. It is then easy to see that the unique discrete normal subgroup of $G$ is the unity subgroup. On the other hand, $H$ is a maximal subgroup in $G$ with Lie algebra $\mathfrak h\subset \gg$. In summary $\cM^7\cong G/H$ cannot be quotiented.

Now we shall pass to the automorphism group $G=GL_2(\R)\ltimes\R^4$. It also has two connected components,
$\pi_0(G)=\bZ_2$, so $G=G^+\sqcup G^-$ according to the determinant of the reductive part.
Each component is not simply connected, namely, we have the same retracts as in \S\ref{sec:3.3-prerequisite}:
$\pi_1(G^+,E_1)\cong\bZ$ and $\pi_1(G^-,E_{-1})\cong\bZ$.
The passage to the universal cover is similar to \eqref {exseq1}: we get the short exact sequence
 \begin{equation}\label{exseq4}
1\longrightarrow\bZ\longrightarrow\widetilde{G}\longrightarrow G\longrightarrow1\;,
 \end{equation}
where $\widetilde G=\widetilde{GL_2(\R)}\ltimes\bR^4$. The group structure of
$\widetilde{GL_2(\R)}$ is as in \S\ref{sec:3.3-prerequisite} and $\widetilde{GL_2(\R)}$ acts on $\mathbb R^4\cong S^3\mathbb R^2$ by factorization through $GL_{2}(\mathbb R)$. We note that
$$\widetilde G=\widetilde{G^+}\sqcup\widetilde{G^-}\;,$$ with simply-connected components $\widetilde{G^\pm}=\widetilde{GL_2^\pm(\R)}\ltimes\R^4$. 
%i.e., $\pi_1(\widetilde{G^+},E_1)=\pi_1(\widetilde{G^-},E_{-1})=0$. 

The retract \eqref {exseq3} and the subsequence \eqref {exseq2} hold for the sequence \eqref{exseq4} as well. In this case the center $\mathcal{Z}$ of $G$ is trivial, yet 
its preimage  $
\widetilde{\mathcal{Z}}=\{[\gamma_k]:k\in\bZ\}
 $ via the covering map 
is central only in $\widetilde{G^+}$, and it is normal in 
$\widetilde{G}$ (by the same reasons as in \S\ref{sec:3.3-prerequisite}). The group $\widetilde{\mathcal{Z}}$ is the maximal discrete normal subgroup in $\widetilde{G}$.
 
Using that the sequence \eqref{exseq2} splits, so that
$\widetilde H\cong H\times \widetilde{\mathcal Z}\cong H\times\bZ$ as groups, we may obtain all the homogeneous models as quotients
$$\widetilde{\cM}_m=\big(\widetilde G/m\bZ\big)/H\cong \widetilde{G}/\big(H\times m\bZ\big)$$ 
by a subgroup $m\bZ$ of $\widetilde{\mathcal{Z}}\cong\bZ$, for $m\ge0$. Clearly
$\widetilde{\cM}_m$ is an $\mathbb Z_m$-covering of $\cM=G/H\cong\widetilde G/\widetilde H$.
(If $m=1$ we get back our initial manifold $\cM^7$, if $m=0$ we get the universal cover of $\cM^7$.)
Each of the models $\widetilde{\cM}_m$ has the structure of a $3$-nondegenerate 
CR manifold of hypersurface type and its automorphism group is $\widetilde{G}/m\bZ$.

\section{The maximal symmetric model: CR realization and beyond}\label{sec:5}

Here we will provide a realization $\cM^7\subset \bC^4$ as a real hypersurface in $\bC^4$ and give a local coordinate expression for the homogeneous model $G/H$ of \S\ref{sec:3.1}, where 
$G=GL_2(\mathbb R)\ltimes S^3\R^2$
and $H=\{\operatorname{diag}(a,1/a^2):a\neq 0\}$. (For simplicity of exposition in this section, the stabilizer  is conjugated to that of \S\ref{sec:3.1}, namely $\mathfrak{h}=\operatorname{Lie}(H)$ is generated by $\operatorname{diag}(1/3,-2/3)$ here.)

\subsection{Tube realization}\label{sec:5.1}

To realize it we note that the Abelian part $\R^4\cong S^3\R^2$ acts by
translations on itself, which we denote $\R^4_y$, and the
reductive part $GL_2(\R)$ acts on $\R^4_x\cong S^3\mathbb R^2$ with the minimal orbit
in the projectivization given by the degree $3$ rational normal curve
$\{[1:\lambda:\lambda^2:\lambda^3]\mid\lambda\in\mathbb R\mathbb P^1\}\subset\R\mathbb{P}_x^3$.
Let $R=\{(r^3,r^2s,rs^2,s^3)\mid r,s\in\mathbb R\}\subset\R^4_x$ be the cone over it, which we refer to as the {\it rational normal cone} in $\mathbb R^4_x$. It is given by the relations
 $
d_1:=x_0x_2-x_1^2=0$, $d_2:=x_0x_3-x_1x_2=0$, $d_3:=x_1x_3-x_2^2=0
 $, where $(x_0,x_1,x_2,x_3)$ are the coordinates of $\mathbb R^4_x$.
Note that the syzgies between the generators of the ideal $\langle d_1,d_2,d_3\rangle$ defining the rational normal cone are
$x_3d_1-x_2d_2+x_1d_3=0$, $x_2d_1-x_1d_2+x_0d_3=0$.

We let $TR\subset\mathbb R^4_x$ be the tangent variety to the rational normal cone, which is locally parametrized 
as 
$x_0=r^3$, $x_1=r^2(s+t)$, $x_2=rs(s+2t)$, $x_3=s^2(s+3t)$, where $r,s,t\in\mathbb R$.
Note that not only $t=0$ but also $r=0$ is in the singular locus, so we may either require an additional chart like
$x_0=r^2(r-3t)$, $x_1=rs(r-2t)$, $x_2=s^2(r-t)$, $x_3=s^3$, or have a surjective map as in \eqref{7D} in the Introduction. With any approach, eliminating the parameters, one gets the following global defining equations
$$
TR=\big\{x\in\mathbb R^4_x\mid x_0^2x_3^2-6x_0x_1x_2x_3+4x_0x_2^3+4x_1^3x_3-3x_1^2x_2^2=0\big\}\;,
$$
which are in agreement with and give a geometric interpretation to the equations in \cite[\S 5.1]{FK2}.

We note that a matrix $\begin{pmatrix} a & b \\ c &d \end{pmatrix}$ in $GL_2(\R)$ acts on $\R^4_x$ via the matrix representation
 $$
\begin{pmatrix} 
a^3 & 3a^2b & 3ab^2 & b^3 \\ a^2c & a^2d+2abc & 2abd+b^2c & b^2d \\
ac^2 & bc^2+2acd & 2bcd+ad^2 & bd^2 \\ c^3 & 3c^2d & 3cd^2 & d^3
\end{pmatrix}
 $$
and that the $GL_2(\R)$-orbit through the point $p_1=(1,0,0,0)$ is $R\setminus 0$. On the other hand, the orbit through the point $p_2=(0,1,0,0)\in TR\setminus R$ 
%as well as the point $p'_2=(1,1,0,0)\in\Sigma\setminus R$, 
is the full complement $TR\setminus R$.
The stabilizers of those points in $GL_2(\R)$ are the $2$-dimensional solvable group
 \begin{equation}\label{HH1}
\operatorname{Stab}_{p_1}=\left\{\begin{pmatrix} 1 & b \\ 0 & d \end{pmatrix}: d\neq 0\right\}\cong\operatorname{Sol}(2)\;,
 \end{equation}
and the subgroups
 \begin{equation}\label{HH2}
H=\operatorname{Stab}_{p_2}=\left\{\begin{pmatrix} a &  0 \\ 0 & a^{-2} \end{pmatrix}: a\neq 0\right\}\;.
%\;\;\text{and}\;\;
 % H'=
%\operatorname{Stab}_{p'_2}=\left\{\begin{pmatrix} a & & \dfrac{1-a^3}{3a^2} \\ & & \\ 0 & & \dfrac{1}{a^2} \end{pmatrix}\right\}\;,
 \end{equation}
%which are, of course, conjugated, since $p_2$ and $p_2'$ are two different points in the same orbit. 
%NB: One can easily show $p'_2\in\Sigma\setminus R$ but may miss the point $p_2$ in some parametrizations
%One may have difficulty in showing that $p_2$ is in $\Sigma$, as the vector $\vec{p_1p_2}$
%is not a derivative of the above parametrization by neither $r$ nor $s$ at any point.
%It is however a combination of those derivatives, which is a correct way to do in affine (non-projective) version.
We note that the stabilizer $\operatorname{Stab}_{p_2}$ is the subgroup $H$ of $GL_2(\R)$ from \S\ref{sec:3.3}. We set $\Sigma:=TR\setminus R$.

 \begin{proposition}
\label{prop:diff-type}
The orbit $\Sigma$ is diffeomorphic to $S^1\times\R^2$.
 \end{proposition}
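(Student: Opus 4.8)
The plan is to identify $\Sigma$ with a homogeneous space of the \emph{connected} group $\GL_2^+(\bR)$ and then compute the quotient explicitly via a Gram--Schmidt (Iwasawa) decomposition. Since $\Sigma=TR\setminus R$ is the $\GL_2(\bR)$-orbit of $p_2$ with stabilizer $H$ of \eqref{HH2}, we have $\Sigma\cong\GL_2(\bR)/H$, and the first point to settle is that, despite $\GL_2(\bR)$ being disconnected, $\Sigma$ is connected. Indeed $H=\{\operatorname{diag}(a,a^{-2}):a\neq0\}$ meets both components, as $\det\operatorname{diag}(a,a^{-2})=a^{-1}$ sweeps all of $\bR_\times$; concretely $h_0=\operatorname{diag}(-1,1)\in H\cap\GL_2^-(\bR)$, whence $\GL_2^-(\bR)\cdot p_2=\GL_2^+(\bR)h_0\cdot p_2=\GL_2^+(\bR)\cdot p_2$. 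Therefore $\Sigma=\GL_2^+(\bR)\cdot p_2\cong\GL_2^+(\bR)/K$, where $K=H\cap\GL_2^+(\bR)=\{\operatorname{diag}(a,a^{-2}):a>0\}$ is a connected one-dimensional subgroup.

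Next I would exploit the diffeomorphism $\GL_2^+(\bR)\cong\SO(2)\times T\times N$ given by Gram--Schmidt, where $T=\{\operatorname{diag}(s_1,s_2):s_1,s_2>0\}$ and $N=\{\begin{psmallmatrix}1&u\\0&1\end{psmallmatrix}:u\in\bR\}$, writing a general element as $g=k\,\tau\,n$ in the coordinates $(k,s_1,s_2,u)$. Because $K\subset T$ is diagonal it normalizes $N$, so right multiplication by $k_0=\operatorname{diag}(a,a^{-2})\in K$ gives $gk_0=k\,(\tau k_0)\,(k_0^{-1}n k_0)$ with $\tau k_0\in T$ and $k_0^{-1}n k_0\in N$; a short computation then yields
\[
(k,s_1,s_2,u)\longmapsto(k,\ a s_1,\ a^{-2}s_2,\ a^{-3}u),\qquad a\in\bR_+.
\]
In particular the $\SO(2)$-coordinate is fixed, and the entire $K$-action is confined to the $(s_1,s_2,u)$-directions.

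Finally I would note that this residual $\bR_+$-action on $\{(s_1,s_2,u):s_1,s_2>0\}$ is free and proper (already $s_1\mapsto a s_1$ forces $a=1$ at a fixed point), with global slice $\{s_1=1\}$ met exactly once by each orbit (take $a=s_1^{-1}$). The $K$-invariant functions $s_1^2 s_2$ and $s_1^3 u$ restrict to $(s_2,u)$ on this slice and identify the quotient of the $(s_1,s_2,u)$-factor with $\{(s_2,u):s_2>0\}\cong\bR_+\times\bR\cong\bR^2$. Since the $\SO(2)$ factor splits off untouched, $\Sigma\cong\SO(2)\times\bR^2\cong S^1\times\bR^2$, the circle being the maximal compact direction of $\GL_2^+(\bR)$. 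I expect the only genuinely delicate point to be the reduction to the connected group in the first step: since $\GL_2(\bR)$ has two components one might expect $\Sigma$ to be disconnected, but this is defeated precisely by $H$ meeting $\GL_2^-(\bR)$, which is what leaves a single $S^1$ factor rather than two components.
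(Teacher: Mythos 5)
Your argument is correct, but it takes a genuinely different route from the paper. The paper proves the statement by an explicit parametrization: it writes $\Sigma$ as the image of $\Psi(\phi,b,t)=b\gamma(\phi)+t\gamma'(\phi)$ with $\gamma(\phi)=(\cos^3\phi,\cos^2\phi\sin\phi,\cos\phi\sin^2\phi,\sin^3\phi)$, checks that the $4\times 2$ matrix $[\gamma(\phi),\gamma'(\phi)]$ has rank $2$ so that each half-plane $\{(b,t):t>0\}$ is embedded, and observes that $\phi\mapsto\phi+\pi$ induces $(b,t)\mapsto(-b,-t)$, so restricting to $t>0$ gives an injective parametrization of $\Sigma$ by $S^1\times\R\times\R_+$. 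You instead work entirely on the group side: you reduce to the connected group via the key observation that $H$ meets both components of $\GL_2(\R)$ (so $\Sigma=\GL_2^+(\R)\cdot p_2$ is connected, with connected stabilizer $K$), and then compute $\GL_2^+(\R)/K$ through the Iwasawa coordinates, where the right $K$-action $(k,s_1,s_2,u)\mapsto(k,as_1,a^{-2}s_2,a^{-3}u)$ visibly fixes the $\SO(2)$ factor and admits the global slice $\{s_1=1\}$. Your computations check out (including $h_0=\operatorname{diag}(-1,1)\in H\cap\GL_2^-(\R)$ and the conjugation $k_0^{-1}nk_0$). What your approach buys is a cleaner, more structural treatment of the connectedness issue and an argument that transfers verbatim to other $\GL_2(\R)$-orbits; what the paper's approach buys is an explicit ruled-surface picture of $\Sigma$ fibered over the rational normal curve, whose coordinates are reused later (e.g., for the diffeomorphism type of $R\setminus 0$ in the tube $\cN^6$ and for the local parametrization of $TR$). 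The one step you should make explicit is the identification $\Sigma\cong\GL_2(\R)/H$ as smooth manifolds: this requires the orbit to be embedded, which holds because $\Sigma=TR\setminus R$ is locally closed in $\R^4_x$ ($TR$ and $R$ being closed), so the standard orbit--stabilizer theorem for locally closed orbits applies; the paper's parametrization sidesteps this by exhibiting the embedding directly.
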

 
 \begin{proof}
Consider the map $\Psi:(\R^1\!\!\!\mod2\pi\bZ)\times\R\times\R\longrightarrow\R^4_x$ given by
 \begin{eqnarray*}
(\phi,b,t)\mapsto \!\!&(b\cos^3\phi-3t\cos^2\phi\sin\phi,
b\cos^2\phi\sin\phi+t(\cos^3\phi-2\cos\phi\sin^2\phi),\\
&\,b\cos\phi\sin^2\phi-t(\sin^3\phi-2\cos^2\phi\sin\phi),b\sin^3\phi+3t\cos\phi\sin^2\phi).
 \end{eqnarray*}
We claim that the restriction of $\Psi$ to $(\R^1\!\!\!\mod2\pi\bZ)\times\R\times\R_+$ is injective, with the image $\Sigma$. Indeed, 
the above map has the form $\Psi(\phi,b,t)=b\gamma(\phi)+t\gamma'(\phi)$ and
one easily checks that the $4\times2$ matrix $[\gamma(\phi),\gamma'(\phi)]$ has rank 2 for any fixed
parameter $\phi$, so any half-plane parametrized by $(b,t)\in \R\times\R_{+}$ is embedded. We also note that the boundary $(\R^1\!\!\!\mod2\pi\bZ)\times\R\times\{t=0\}$ corresponds to the rational normal cone $R$.

Consequently the image $\Psi\big((\R^1\!\!\!\mod2\pi\bZ)\times\R\times\R_+\big)$ is fibered by half-planes parametrized by $(b,t)\in \R\times\R_+$ 
over the rational normal curve $\Psi\big((\R^1\!\!\!\mod2\pi\bZ)\times\{b=1\}\times\{t=0\}\big)\subset S^3$;  note that the parametrization of the rational normal curve is injective. This proves injectivity of our map.

The change $\phi\mapsto\phi+\pi$ results in the reflection $(b,t)\mapsto (-b,-t)$ and therefore interchanges the half-planes $\{b\in\R, t>0\}$
and $\{b\in\R, t<0\}$. 
This implies the claim about the image.
 \end{proof}
 
\begin{remark}
The projective version $\mathbb{P}\Sigma$ gives the M\"obius band, as it is a line bundle 
over $\mathbb{P}R$ with connected complement to the (central) rational normal curve $\R\mathbb{P}^1$.
\end{remark}

We define the tube $TR\times\R^4\subset\mathbb C^4_z=\R^4_x\times\R^4_y$,
where the coordinates $z_k=x_k+iy_k$ specify the standard complex
structure $J$ of $\mathbb C^4_z$, namely, $J\partial_{x_k}=\partial_{y_k}$ for all $0\leq k\leq3$. Now the group $GL_2(\R)$ acts diagonally on $\mathbb C^4_z$
and it preserves $J$, since it is in fact a subgroup of $GL_2(\mathbb C)$. Thus $G=GL_2(\mathbb R)\ltimes S^3\R^2$ is a group of complex affine transformations of the CR manifold $(TR\times\R^4, \cD, \cJ)$, with $\cD$ the maximal $J$-complex subbundle of the tangent bundle of $TR\times\R^4$
and
%$T((TR)\times\R^4)\cap J\big(T((TR)\times\R^4)\big)$ and 
$\cJ$ the restriction of $J$ to $\cD$.

Note that the $7$-dimensional manifold $TR\times\mathbb R^4$ is not homogeneous for the action of $G$, as there are three orbits: two orbits in $R\times\R^4_y$ (determined by the punctured rational normal cone and, respectively, its vertex),
and the open orbit $\Sigma\times\R^4_y$ that is complementary to $R\times\R^4_y$.
The $3$-nondegenerate $7$-dimensional CR homogeneous model is the latter orbit, which we denoted $\mathcal R^7$ in the Introduction. 
\begin{theorem}
\label{thm:29}
The above $7$-dimensional CR manifold $\cM^7=\Sigma\times\R^4_y$ is 3-nondegenerate and 
it is diffeomorphic to $S^1\times\mathbb R^6$. Its automorphism group is $G$, which acts transitively, and $\cM^7\cong G/H$, where the stabilizer subgroup is as in \eqref{HH2}.
 \end{theorem}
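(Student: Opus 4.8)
The plan is to deduce the four assertions of the theorem---diffeomorphism type, the presentation $\cM^7\cong G/H$, $3$-nondegeneracy, and the full automorphism group---from facts already in place, principally Proposition~\ref{prop:diff-type}, the local classification Theorem~\ref{thm:dichotomy-notreally}, and the global analysis of \S\ref{sec:3.3}. The diffeomorphism type is immediate: Proposition~\ref{prop:diff-type} gives $\Sigma=TR\setminus R\cong S^1\times\R^2$, whence $\cM^7=\Sigma\times\R^4_y\cong S^1\times\R^2\times\R^4\cong S^1\times\R^6$. For the homogeneous presentation I would use that $G=GL_2(\R)\ltimes S^3\R^2$ acts on $\bC^4=\R^4_x\times\R^4_y$ with the reductive factor acting diagonally through the $S^3$-representation (hence commuting with $J$, so $G$ acts by CR automorphisms) and the abelian factor translating $\R^4_y$. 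Since the $GL_2(\R)$-orbit of $p_2=(0,1,0,0)$ is $\Sigma$ by \eqref{HH2} while the translations sweep out $\R^4_y$, the $G$-orbit through $(p_2,0)$ is exactly $\Sigma\times\R^4_y=\cM^7$, so $G$ is transitive; and $(A,v)$ fixes $(p_2,0)$ iff $A\in\operatorname{Stab}_{p_2}=H$ and $v=0$, giving $\cM^7\cong G/H$ with $H$ as in \eqref{HH2}. The action is effective because $GL_2(\R)$ acts effectively on $S^3\R^2$.

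To establish $3$-nondegeneracy I would compute the CR algebra $(\gg,\gq)$ of $\cM^7=G/H$ at the base point $x_0=(p_2,0)$ and match it with the abstract model of \S\ref{sec:3.1}; note that invoking Theorem~\ref{thm:dichotomy-notreally} for this would be circular, as that result presupposes $3$-nondegeneracy. Concretely, $\cD_{x_0}=T_{x_0}\cM^7\cap J\,T_{x_0}\cM^7$ and $\gq=\{\xi\in\wh\gg\mid\ev_{x_0}(\xi)\in\cD_{10}\}$ are read off by evaluating the complexified fundamental vector fields of $\gg=\ggl_2(\R)\ltimes S^3\R^2$ at $x_0$. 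One then checks that the Freeman sequence $\gq\supset\gq^0\supset\gq^1\supset\gq\cap\overline\gq$ has the dimensions recorded in \S\ref{sec:3.1}, with $\gq^1\neq\gq\cap\overline\gq$ but $\gq^2=\gq\cap\overline\gq$, which is precisely $3$-nondegeneracy; geometrically this degeneration to order three is forced by the second- and third-order contact of $\Sigma$ with the rational normal cone $R$. With $3$-nondegeneracy in hand, Theorem~\ref{thm:dichotomy-notreally} then identifies $\cM^7$ locally with the model of \S\ref{sec:3.1}.

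It remains to pin down the automorphism group. Effectiveness yields an embedding $G\hookrightarrow\operatorname{Aut}(\cM^7)$, and since $\dim\gg\le 8=\dim G$ by Theorem~\ref{T1} while $G$ already realizes the full $8$-dimensional symmetry algebra, the identity components agree: $\operatorname{Aut}(\cM^7)^o=G^o$. The delicate point---which I expect to be the main obstacle---is the disconnected part, i.e.\ showing no automorphisms exist beyond the two components of $G$. I would argue that an arbitrary automorphism $\Phi$ normalizes $G^o$ and hence induces an automorphism of $\gg$; composing with a suitable element of $G$ reduces to $\Phi$ fixing $x_0$ and acting on $T_{x_0}\cM^7$ through the CR-isotropy, which for a $3$-nondegenerate structure is rigidly determined by the associated absolute parallelism. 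Matching this isotropy against $H$ forces $\Phi\in G$. This is exactly the conclusion reached for the abstract model $G/H$ in \S\ref{sec:3.3}, where $\operatorname{Aut}$ was shown to be $G$ (the case $m=1$); since the previous paragraphs realize $\cM^7$ as that homogeneous space, I would transport that conclusion to obtain $\operatorname{Aut}(\cM^7)=G$. The genuinely nontrivial step is thus controlling the reflection component $GL_2^-(\R)$ and excluding hidden discrete CR symmetries, rather than the orbit or Freeman-sequence computations, which are routine.
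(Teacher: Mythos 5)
Your handling of the diffeomorphism type, the transitivity, and the presentation $\cM^7\cong G/H$ agrees with the paper, and your plan for $3$-nondegeneracy --- computing the Freeman subalgebras $\gq\supset\gq^0\supset\gq^1\supset\gq^2$ of the CR algebra at the base point instead of the Freeman bundles directly --- is a legitimate equivalent route; you are also right that invoking Theorem \ref{thm:dichotomy-notreally} before establishing $3$-nondegeneracy would be circular. Note, however, that you only assert the outcome of that computation, whereas the paper actually exhibits generators $Z_1,Z_2,Z_3$ of $\cD_{10}$, computes their brackets, and reads off $\cF^0_{10}=\langle Z_1,Z_2\rangle$, $\cF^1_{10}=\langle Z\rangle$, $\cF^2_{10}=0$; in either formulation this explicit verification is the substance of the claim and cannot be replaced by the heuristic about orders of contact with the cone.

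The genuine gap is in the identification of the full, disconnected automorphism group. You appeal to the CR-isotropy being ``rigidly determined by the associated absolute parallelism,'' but no absolute parallelism for $3$-nondegenerate $7$-dimensional CR structures is constructed in this paper (nor is one available to cite), so that step has no support. Your fallback --- transporting the statement $\Aut=G$ from \S\ref{sec:3.3} --- is circular: the assertion there that the models $\widetilde{\cM}_m$ have automorphism group $\widetilde G/m\bZ$ presupposes exactly the exclusion of extra discrete automorphisms that is at stake here, and the paper invokes \S\ref{sec:3.3} only for the fact that nontrivial covers of $G$ act non-effectively on $G/H$. The paper's actual argument for the discrete part is concrete and affine-geometric: any CR automorphism normalizes the identity component and hence the translation subgroup $\R^4_y$ (the nilradical), so it acts by complex affine transformations of the tube; it must then preserve the minimal $GL_2(\R)$-orbit (the rational normal curve) and therefore factors through $GL_2(\R)$, whose affine action on the tangent variety of the rational normal cone is already accounted for in $G$. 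You would need to supply this, or an equivalent rigidity argument, to close the proof.
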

We present here a straightforward coordinate computation. Another argument, adaptable to higher dimensions,
will be given in \S\ref{sec:5.4}.
\begin{proof}
The claim on the diffeomorphism type of $\cM^7$ follows immediately by Proposition \ref{prop:diff-type}. 
 The CR-distribution $\cD$ is generated by 
the vector fields
 \begin{align*}
X_1&=x_0\p_{x_0}+x_1\p_{x_1}+x_2\p_{x_2}+x_3\p_{x_3},\\
X_2&=4d_1^2\p_{x_1}+4d_1d_2\p_{x_2}+3d_2^2\p_{x_3},\\
X_3&=2d_1\p_{x_1}+d_2\p_{x_2},\;
\end{align*}
and $Y_1=JX_1$, $Y_2=JX_2$, $Y_3=JX_3$.
% \begin{align*}
% Y_1&=JX_1=x_0\p_{y_0}+x_1\p_{y_1}+x_2\p_{y_2}+x_3\p_{y_3},\\
% Y_2&=JX_2=4d_1^2\p_{y_1}+4d_1d_2\p_{y_2}+3d_2^2\p_{y_3},\\
% Y_3&=JX_3=2d_1\p_{y_1}+d_2\p_{y_2}.
% \end{align*}
Using the local parametrization $x_0=r^3$, $x_1=r^2(s+t)$, $x_2=rs(s+2t)$, $x_3=s^2(s+3t)$ of $TR$,  we get more symmetric formulae
$$-\frac{1}{2r^3t^2}X_3=r\partial_{x_1}+s\partial_{x_2}$$ and 
 $$
\langle X_1,X_2\rangle = \langle 3r^2\partial_{x_0}+2rs\partial_{x_1}+s^2\partial_{x_2},
r^2\partial_{x_1}+2rs\partial_{x_1}+3s^2\partial_{x_3}\rangle.
 $$
Hence $\cD_{10}$ is generated by
 \begin{align*}
Z_1&=x_0\p_{z_0}+x_1\p_{z_1}+x_2\p_{z_2}+x_3\p_{z_3},\\
Z_2&=4d_1^2\p_{z_1}+4d_1d_2\p_{z_2}+3d_2^2\p_{z_3},\\
Z_3&=2d_1\p_{z_1}+d_2\p_{z_2},
 \end{align*}
where $\p_{z_k}=\frac12\big(\p_{x_k}-i\p_{y_k}\big)$ as usual,
and $\cD_{01}$ is generated by $\bar{Z}_1,\bar{Z}_2,\bar{Z}_3$.

The following commutation relations hold modulo $\cD_{01}$ on $\cM^7=\Sigma\times\mathbb R^4_y$:
\begin{align*}
[\bar{Z}_1,Z_1]& =\frac12 Z_1,\  [\bar{Z}_1,Z_2] =2 Z_2,\ [\bar{Z}_1,Z_3] =Z_3,\\
[\bar{Z}_2,Z_1]&= \frac12 Z_2,\  [\bar{Z}_2,Z_2] =\frac{8d_1(x_0d_3-x_2d_1)}{d_2}Z_2,\ 
[\bar{Z}_2,Z_3]= 2(x_0d_2-2x_1d_1)Z_3,\\
[\bar{Z}_3,Z_1]& =\frac12 Z_3,\  
[\bar{Z}_3,Z_2] = -\frac{2x_2d_1+x_1d_2}{d_2}Z_2+\frac{2d_1(x_0d_3-x_2d_1)}{d_2}Z_3,\\
[\bar{Z}_3,Z_3]&= -\frac{2x_2d_1+x_1d_2}{2d_2}Z_3+\frac{d_1(x_0d_3-x_2d_1)}{d_2}\p_{z_1}\not\in D.
 \end{align*}
Therefore we have 
$\cF^0_{10}=\langle Z_1,Z_2\rangle$
and the next term of the Freeman sequence is 
$\cF^1_{10}=\langle Z\rangle$,
for the vector field $4x_0Z=4d_1(x_0d_3-x_2d_1)Z_1-d_2Z_2$. We can simplify this vector field as
 $$
Z=d_1(x_0d_3-x_2d_1)\p_{z_0}-d_1(x_1d_3-x_3d_1)\p_{z_1}
-d_3(x_0d_3-x_2d_1)\p_{z_2}-d_3(x_1d_3-x_3d_1)\p_{z_3}\;,
 $$
and in the local parametrization, we get the simpler expression
 $$
-\frac1{2r^6t^5s}Z=r^3\partial_{z_0}+r^2s\partial_{z_1}+rs^2\partial_{z_2}+s^3\partial_{z_3}\;.
 $$
Finally $\cF^2_{10}=0$, and this finishes the proof of 3-nondegeneracy.

By our classification of locally homogeneous $3$-nondegenerate $7$-dimensional CR manifolds, the dimension of $G$
is the upper bound for the dimension of the automorphism group of such a structure.  
The affine automorphism group is exactly $G$, since $GL_2(\mathbb R)$ is known to be the affine automorphism group of the rational normal cone and its tangent variety. 
The fact that the entire automorphism group is $G$ follows since any discrete extension of $G$ acts linearly on the radical $\R^4_y$, therefore it normalizes the action of $GL_2(\mathbb R)$, it preserves the minimal orbit (the rational normal curve), and it factors through the action of $GL_2(\mathbb R)$. In \S \ref{sec:3.3}, we proved that any non-trivial cover
of $G$ acts non-effectively on $\cM^7\cong G/H$, whence the claim. 
\end{proof}

\subsection{Tube over rational normal cone}\label{sec:5.2}

The $G$-orbit $\cN^6=(R\setminus0)\times\R^4_y$ can be interpreted in its own right
as a CR manifold of CR-dimension $2$ and CR-codimension $2$, and it also
satisfies the H\"ormander condition, i.e., the corresponding CR-distribution $\cD$ is bracket generating.
We remark that the Freeman filtration and the notion of $k$-nondegeneracy equally apply to higher CR-codimensions.  

 \begin{theorem}
The above $6$-dimensional CR manifold $\cN^6=(R\setminus0)\times\R^4_y$ is 2-nondegenerate 
and it is diffeomorphic to $S^1\times\mathbb R^5$. Its automorphism group is $G$, which acts transitively, and 
$\cN^6\cong G/\operatorname{Sol}(2)$, where the stabilizer subgroup is as in \eqref{HH1}.
 \end{theorem}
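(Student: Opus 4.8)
The plan is to follow the blueprint of the proof of Theorem~\ref{thm:29}, replacing the tangent variety by the rational normal cone $R$ itself and keeping in mind that now the normal bundle $\nu_{\cN}^{\bC}$ has rank $2$. First I would settle the diffeomorphism type. The map $\phi\colon(r,s)\mapsto(r^3,r^2s,rs^2,s^3)$ is injective on $\R^2\setminus 0$ (real cube roots are unique, so $r^3={r'}^3$ and $s^3={s'}^3$ force $(r,s)=(r',s')$), its differential has rank $2$ away from the origin, and it is proper onto its image; hence it is an embedding with image $R\setminus 0$, so $R\setminus 0\cong\R^2\setminus 0\cong S^1\times\R$ and $\cN^6=(R\setminus 0)\times\R^4_y\cong S^1\times\R^5$. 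For transitivity I would use that $GL_2(\R)$ acts transitively on $R\setminus 0$ via the orbit through $p_1=(1,0,0,0)$ (recorded just before \eqref{HH1}), while $S^3\R^2\cong\R^4_y$ acts simply transitively by translation on the $y$-factor; consequently $G$ is transitive and the stabilizer of $(p_1,0)$ is exactly $\operatorname{Stab}_{p_1}\cong\operatorname{Sol}(2)$ of \eqref{HH1}, giving $\cN^6\cong G/\operatorname{Sol}(2)$.

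The heart of the argument, and the step I expect to be the main obstacle, is the explicit determination of the Freeman sequence proving $2$-nondegeneracy. I would generate $\cD_{10}$ by the holomorphic lifts of a tangent frame of $R$, namely $Z_1=3r^2\partial_{z_0}+2rs\partial_{z_1}+s^2\partial_{z_2}$ and $Z_2=r^2\partial_{z_1}+2rs\partial_{z_2}+3s^2\partial_{z_3}$, where $\partial_{z_k}=\tfrac12(\partial_{x_k}-i\partial_{y_k})$. Because $R$ is a cone, its ruling direction is Levi-flat: the Euler lift $Z_E=x_0\partial_{z_0}+x_1\partial_{z_1}+x_2\partial_{z_2}+x_3\partial_{z_3}=\tfrac13(rZ_1+sZ_2)$ lies in $\cD_{10}$, and computing the commutators $[\bar Z_i,Z_j]$ modulo $\cD\otimes\bC$ as in the bracket table of Theorem~\ref{thm:29} one sees that $Z_E$ spans the left kernel of the Levi form while the complementary direction is Levi-nondegenerate; thus $\cF^0_{10}=\langle Z_E\rangle$ is exactly one-dimensional. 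To conclude I would compute the single further bracket $[Z_E,\bar Z_2]$ and check that $[Z_E,\bar Z_2]\notin\cF^0_{10}\oplus\cD_{01}$. Since $\cF^1_{10}\subseteq\cF^0_{10}=\langle Z_E\rangle$, this forces $Z_E\notin\cF^1_{10}$, hence $\cF^1_{10}=0$ and $\cN^6$ is $2$-nondegenerate (this also exhibits the bracket-generating property).

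Finally I would identify the automorphism group exactly as in Theorem~\ref{thm:29}. The affine CR-automorphisms of the tube $\cN^6$ are the affine symmetries of $R$, and $GL_2(\R)$ is the full linear automorphism group of the rational normal cone; together with the translations this yields $G$. Any CR-automorphism acts linearly on the radical $\R^4_y$, hence normalizes the $GL_2(\R)$-action, preserves the minimal orbit (the rational normal curve), and therefore factors through $GL_2(\R)$; combined with the effectiveness analysis of \S\ref{sec:3.3} (no nontrivial cover of $G$ acts effectively), this shows the automorphism group is precisely $G$. The genuinely new input beyond the hypersurface case is verifying that every CR-automorphism is affine, so that the reduction to affine symmetries of $R$ is legitimate; I expect the hard part here to be this rigidity of tubes over the nondegenerate cone $R$, which should follow by the method announced for Theorem~\ref{thm:29}, or alternatively by a direct prolongation computation showing that the symmetry algebra has dimension exactly $8$.
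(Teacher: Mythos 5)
Your proposal is correct and follows essentially the same route as the paper: the same frame $Z_1,Z_2$ with the Euler lift $Z_0=\tfrac13(rZ_1+sZ_2)$ spanning $\cF^0_{10}$ and one further bracket killing $\cF^1_{10}$, the same orbit/stabilizer identification $\cN^6\cong G/\operatorname{Sol}(2)$, and the same Lie-theoretic argument reducing the full automorphism group to $G$ once the symmetry algebra is known to be $8$-dimensional. The step you flag as the remaining obstacle is handled in the paper exactly as you propose, by a direct (omitted, Maple-verified) computation of the symmetry algebra; the only cosmetic caveat is that at points with $s=0$ one should test $\clL_2(Z_0,\cdot)$ against $\bar Z_1$ rather than $\bar Z_2$ (or invoke $G$-equivariance to check at a single point).
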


 \begin{proof}
The claim on the diffeomorphism type of $\cM^7$ follows by the proof of Proposition \ref{prop:diff-type}. 
In the standard parametrization of $R\setminus 0$ given by $x_0=r^3,x_1=r^2s,x_2=rs^2,x_3=s^3$, $(r,s)\neq(0,0)$,
the CR-distribution $\cD$ is generated by the vector fields
 \begin{align*}
X_1&= 3r^2\p_{x_0}+2rs\p_{x_1}+s^2\p_{x_2},\\
X_2&= r^2\p_{x_1}+2rs\p_{x_2}+3s^2\p_{x_3},
\end{align*}
together with $Y_1=JX_1$, $Y_2=JX_2$.

The H\"ormander condition then follows from
 $$
Y_3=[X_1,Y_2]=[X_2,Y_1]=r\partial_{y_1}+s\partial_{y_2},\
[X_1,Y_3]=\partial_{y_1},\ [X_2,Y_3]=\partial_{y_2}\;,
 $$
while the Cauchy characteristic space is easily seen to be generated by
 \begin{align*}
X_0&= \tfrac13(rX_1+sX_2)= r^3\p_{x_0}+r^2s\p_{x_1}+rs^2\p_{x_2}+s^3\p_{x_3},\\
Y_0&= \tfrac13(rY_1+sY_2)= r^3\p_{y_0}+r^2s\p_{y_1}+rs^2\p_{y_2}+s^3\p_{y_3}.
\end{align*}

The distribution $\cD_{10}$ is generated by the vector fields $Z_1= \frac12(X_1-iY_1)$, $Z_2= \frac12(X_2-iY_2)$.
We then have $\cF^0_{10}=\langle Z_0\rangle$, where $Z_0= \tfrac13(rZ_1+sZ_2)$, while 
the next term $\cF^1_{10}$ of the Freeman sequence is trivial. This proves $2$-nondegeneracy.

The symmetry algebra is obtained by a somewhat tedious but straightforward computation, which we omit
and make available in Maple supplement to the arXiv version of this paper.
The claim on the automorphism group follows by applying Lie theoretic arguments as in the end of the proof of Theorem \ref{thm:29}.
\end{proof}

\subsection{Relation to other geometries}\label{sec:5.3}

Here we describe the maximal symmetric CR model $\cM^7$ and some related geometries in
the spirit of Klein's Erlangen program. The automorphism group for all of them is $G=GL_2(\R)\ltimes\R^4$,
the models are all homogeneous and the stabilizer subgroup is the subgroup of $GL_2(\R)$ indicated on the edges of the diagram below
(for edges not emanating from $G$ the meaning of the label is a fiber,
a subgroup that has to be added to the one above it to generate the desired stabilizer).
All maps in the diagram are $G$-equivariant, except for the dashed horizontal arrow that represents a natural fibration, which is not a group quotient (note that $\operatorname{Sol}(2)$ does not include $H$ as a subgroup).

This contributes to the Segre correspondence between CR manifolds and related finite type differential equations, well developed for the Levi-nondegenerate case, in our degenerate situation.

\[
\begin{tikzpicture}[descr/.style={fill=white}]
\matrix(m)[matrix of math nodes, row sep=5em, column sep=7em,
text height=3.0ex, text depth=0.9ex]
 % {& $G$ \\ $L$ & $M$ & $N$ \\ $\mathcal{E}$\\};
{& G^8 \\ \cL^6\, & \cM^7\mathstrut & \,\cN^6 \\ & \mathcal{E}^5 \\};
\path[->,font=\scriptsize]
(m-1-2) edge node[descr] {Cartan} (m-2-1)
(m-1-2) edge node[right] {\,$H$} (m-2-2)
(m-1-2) edge node[descr] {$\operatorname{Sol}(2)$} (m-2-3)
(m-2-2) edge node[above] {diag} (m-2-1)
(m-2-1) edge node[descr] {nilp} (m-3-2)
(m-2-3) edge node[descr] {diag} (m-3-2);
\path[dashed,->]
(m-2-2) edge (m-2-3);
\path[dotted,->]
(m-1-2) edge [bend left=30] (m-3-2);
\path[->]
(m-2-2) edge (m-3-2);
\path[->,font=\scriptsize]
(m-2-2) edge node[above right] {{\vbox{\hbox{ \,\,\,\, Borel}\hbox{}}}} (m-3-2);
\end{tikzpicture}
\]

We have already discussed the $\cM^7$ and $\cN^6$ nodes in \S\ref{sec:5.1} and \S\ref{sec:5.2}. The bottom node $\mathcal{E}^5$ is the 
fourth order trivial ODE $y^{\rm{iv}}(x)=0$ considered as a submanifold in jets
 $$
\mathcal{E}^5=\{y_4=0\}\subset J^4(\R,\R)\cong\R^6(x,y_0,y_1,y_2,y_3,y_4)\;. 
 $$
Its symmetry algebra is known to be $\gg=\operatorname{Lie}(G)$, but in order to
make the automorphism group precisely $G$ one has to assume the independent variable $x\in S^1=\R\,\operatorname{mod}\pi\bZ$ to be periodic,
so that actually $\mathcal{E}^5\cong S^1\times\R^4$ (instead of $\mathcal{E}^5\cong\R^5$).
In this case the stabilizer group is the Borel subgroup $B$ of $GL_2(\R)$ and the periodicity is due to
the fact that $GL_2(\R)/B=\R\mathbb{P}^1\cong S^1$.

This quotient $\mathcal{E}^5=G/B$ can be conveniently represented by the root diagram of $\gg$ below,
compare to that of \S\ref{sec:3.1}. The stabilizer subalgebra corresponds to $\gg_0\oplus\gg_1$ -- 
for the moment, ignore the circle around the first component as well as the integral sign above the second. 
The grading corresponds to the Tanaka prolongation of the negatively graded part $\gg_{-4}\oplus\cdots\oplus\gg_{-1}$, which is the symbol
of the Cartan distribution of the equation $\mathcal{E}$.
We also indicate generators for the weak derived flag of vector distributions corresponding to the graded components of negative degree, where 
$D_x=\partial_x+y_1\partial_{y_0}+y_2\partial_{y_1}+y_3\partial_{y_2}$ is the truncated total derivative.

 \begin{center}
\begin{tikzpicture}
\node (A) at (-2,2) {$\gg_{-1}$};
\node (B) at (0,2) {$\gg_0$};
\node (C) at (2,2) {$\gg_1$}; 
\node (a) at (-3,0) { \quad $\gg_{-4}$};
\node (b) at (-1,0) {$\gg_{-3}$};
\node (c) at (1,0) {$\gg_{-2}\,$};
\node (d) at (3,0) {\!\!\!$\gg_{-1}$ \quad };
\draw[dotted] (0,2) circle (0.35);
\path[->,font=\scriptsize,>=angle 90]
(B) edge (A)
(B) edge (C)
(B) edge (a)
(B) edge (b)
(B) edge (c)
(B) edge (d);
\node[black!70] (A1) at (-2,2.5) {$D_x$};
\node[black!70] (B1) at (0,2.5) {Cartan};
\node[black!70] (C1) at (2,2.5) {$\int_x$};
\node[black!70] (a1) at (-3,-0.5) { \quad $\partial_{y_0}$};
\node[black!70] (b1) at (-1,-0.5) {$\partial_{y_1}$};
\node[black!70] (c1) at (1,-0.5) {$\partial_{y_2}$};
\node[black!70] (d1) at (3,-0.5) {\!\!\!$\partial_{y_3}$ \quad };
\end{tikzpicture}
 \end{center}
 
The final node of the diagram is the quotient $\cL^6=G/G_0$, with $G_0$ the completely non-compact Cartan subgroup of $GL_2(\R)$ given by the invertible diagonal matrices. 
This node is intermediate between $\cM^7$ and $\mathcal{E}^5$, and it can be described as follows. 

First of all note that $GL_2(\R)/G_0=SL_2(\R)/\big(G_0\cap SL_2(\R)\big)$ is an adjoint orbit, in particular it has a naturally associated symplectic form up to homothety (we will not make use of it, however). There are three types of non-zero orbits on $\mathfrak{sl}_2(\R)$, ours is diffeomorphic to $T^*S^1$ and it is the orbit that admits 
a Lorentzian metric of constant curvature w.r.t. the Killing form. Thus $\cL^6\cong T^*S^1\times\R^4\cong S^1\times\R^5$.

Next, the quotient $\cL^6=G/G_0$ can be again represented by the root diagram of $\gg$, as
$\cL^6$ is a line bundle over $\mathcal{E}^5$. In fact, the stabilizer reduces from the Borel subgroup $B$ to the Cartan subgroup $G_0$ and we indicate the
changes on the above diagram: the stabilizer subalgebra $\gg_0$ is circled and
the fiber of the line bundle is $\gg_1$. The latter is generated by the symbol $\int_x$ above the top right node, which is an ``algebraic'' integration (a differential operator inverse to $D_x$ in the sense that it acts in the 
opposite direction for the corresponding vector distributions).
We stress that the Cartan subalgebra $\gg_0$ acts as a bigrading on the root spaces of $\gg$, thus
$\cL^6$ is line-parallelizable, in the sense that each root space gives rise to a $1$-dimensional subbundle of $T\cL^6$. Consequently we have many canonical vector distributions.

More explicitly $\int_x$ and $D_x$ form a pair of raising and lowering operators
\smallskip
 $$
[-D_x,\cdot]:\langle\partial_{y_k}\rangle\mapsto\langle\partial_{y_{k-1}}\rangle\,\operatorname{mod}\langle D_x,
%\partial_{y_3},\dots,
\partial_{y_k}\rangle\;,
\qquad
[\textstyle{\int_x},\cdot]:\langle\partial_{y_k}\rangle\mapsto\langle\partial_{y_{k+1}}\rangle\,
\operatorname{mod}\langle \textstyle{\int_x},
%\partial_{y_0},\dots,
\partial_{y_k}\rangle\;,
 $$
where the formulae have to be understood with the agreement that $\langle\partial_{y_{-1}}\rangle=\langle\partial_{y_{4}}\rangle=0$.
Denoting $X=\langle D_x\rangle$, $I=\langle \int_x\rangle$, and $Y_k=\langle \partial_{y_k}\rangle$, we
get the following integrable vector distributions 
 \begin{gather*}
XI,\; XY_0,\; XY_0Y_1,\; XY_0Y_1Y_2,\; XY_0Y_1Y_2Y_3,\\ 
Y_0Y_1Y_2Y_3,\;
IY_3,\; IY_3Y_2,\; IY_3Y_2Y_1,\; IY_3Y_2Y_1Y_0\;,
 \end{gather*}
where we omitted the direct sum symbol.
Thus the invariant geometric structure on $\cL$ consists of a line-parallelization satisfying the above integrability constraints.

Finally, we can write this structure in local coordinates. We use the coordinates $x,y_0,\dots,y_3$ on $\mathcal{E}^5$
lifted to $\cL^6$, keep the same expressions for the generators $D_x,\partial_{y_k}$ and add a coordinate $t$ and
the generator
$$\int_x=\partial_t+x^2\partial_x+3xy_0\partial_{y_0}
+(3y_0+xy_1)\partial_{y_1}+(4y_1-xy_2)\partial_{y_2}+3(y_2-xy_3)\partial_{y_3}\;.$$
A straightforward computation shows that the symmetry algebra of the above line-parallelism is
precisely $\gg=\mathfrak{gl}_2(\R)\ltimes\R^4$ (this is also independently verified in Maple).

\subsection{Higher dimensional generalizations}\label{sec:5.4}

For CR-codimension $1$ and CR-dimension $k>2$ we have the following constructions
in $\mathbb C^{k+1}$. Consider the rational normal cone $R$ in $\R^{k+1}_x$ 
as the cone over the degree $k$ rational normal curve 
$\{[1:\lambda:\dots:\lambda^k]\}\subset\R\mathbb{P}^k_x$.
Its subsequent tangent varieties $T^qR$ for $q=1,\dots,k-2$ 
are obtained by uniting the osculating spaces at any fixed point (for $q=k-1$ we simply have $T^{k-1}R=\R^{k+1}$).

We consider $T^{k-2}R$ and the tube
 $
T^{k-2}R\times\R^{k+1}_y\subset\mathbb C^{k+1}_z=\R^{k+1}_x\times\R^{k+1}_y\;,
 $ 
which inherits a natural CR-distribution $\cD$ with complex structure $\cJ$. 
It turns out that the non-singular part 
 $$
\cM^{2k+1}=\Sigma\times\R^{k+1}_y
 $$ 
is holomorphically nondegenerate. Here and in the following $\Sigma:=T^{k-2}R\setminus T^{k-3}R$.

 \begin{proposition}
The Freeman sequence of $(\cM^{2k+1},\cD,\cJ)$ decreases by one dimension at each step, so this CR structure is 
$k$-nondegenerate. 
 \end{proposition}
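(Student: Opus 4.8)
The plan is to compute the Freeman sequence of the tube at a single point --- all fibre dimensions are constant because $\cM^{2k+1}$ is homogeneous under $G=\GL_2(\R)\ltimes S^k\R^2$ --- and to identify it with the osculating flag of the rational normal curve at the underlying double root; the base case $k=3$ is precisely the computation in the proof of Theorem~\ref{thm:29}.

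First I would record the geometry of the base. The higher tangent variety $T^{k-2}R\subset S^k\R^2\cong\R^{k+1}_x$ is the discriminant, i.e.\ the degree $k$ binary forms with a repeated root, and $\Sigma$ is its smooth locus, so a point $o\in\Sigma$ is a form $\ell^2 g$ with $\ell\nmid g$. Writing $\cO^{j}_\ell:=\ell^{k-j}S^{j}\R^2$ for the $j$-th osculating space of the rational normal curve at $[\ell]$, the classical identification of the tangent space of a tangential (osculating) variety with the next osculating space --- together with the elementary remark that $2\ell g\,\delta\ell+\ell^2\,\delta g$ fills $\ell\cdot S^{k-1}\R^2$ as soon as $\gcd(\ell,g)=1$ --- gives
\[
T_o\Sigma=\cO^{k-1}_\ell .
\]
This produces the complete flag
\[
\cO^{k-1}_\ell\supset\cO^{k-2}_\ell\supset\cdots\supset\cO^{0}_\ell=\langle \ell^k\rangle,\qquad \dim_\R\cO^{j}_\ell=j+1,
\]
whose consecutive quotients are one-dimensional. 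The stabiliser of $o$ in $G$ projects onto the Borel subgroup of $\GL_2(\R)$ fixing the double root $[\ell]$; its torus has distinct weights on $S^k\R^2$, so this osculating flag is the unique flag of proper invariant subspaces in $T_o\Sigma$.

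Next I would transport this to the tube. For $\cM^{2k+1}=\Sigma\times\R^{k+1}_y$ the holomorphic distribution at $o$ is $\cD_{10}\cong T_o\Sigma\otimes\bC$ via $v\mapsto\sum_j v_j\p_{z_j}$, and I claim that under this identification
\[
\cF^{p}_{10}\big|_{o}=\cO^{\,k-2-p}_\ell\otimes\bC\qquad(-1\le p\le k-1,\ \cO^{-1}_\ell:=0).
\]
I would prove this by induction on $p$. For a tube the frame fields $Z_v=\sum_j v_j(x)\p_{z_j}$ have coefficients depending only on $x=\Re z$, so $\p_{\bar z}$-derivatives reduce to $x$-derivatives and the higher Levi form
\[
\clL_{p+1}\colon \cF^{p-1}_{10}\otimes\cD_{01}\longrightarrow \cF^{p-2}_{10}/\cF^{p-1}_{10}
\]
is computed by differentiating the tangent frame along $\cD_{01}$ modulo lower osculating layers, i.e.\ by the successive affine fundamental forms of $\Sigma$. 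It is $\GL_2(\R)$-equivariant, hence up to scale is the natural contraction lowering the osculating degree by one; its left kernel $\cF^{p}_{10}$ is therefore a proper invariant subspace of $\cF^{p-1}_{10}=\cO^{k-1-p}_\ell\otimes\bC$, and by the weight remark above it equals $\cO^{k-2-p}_\ell\otimes\bC$, one dimension smaller. Granting the claim, the Freeman sequence is $\cO^{k-1}_\ell\otimes\bC\supset\cdots\supset\langle\ell^k\rangle\otimes\bC\supset0$, it drops by exactly one dimension at each step, and $\cF^{k-1}_{10}=0$; thus $(\cM^{2k+1},\cD,\cJ)$ is $k$-nondegenerate.

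The main obstacle is the non-degeneracy input feeding the inductive step: one must show that each successive higher Levi form is genuinely nonzero, so that the flag is strict, and has one-dimensional image, so that the drop is exactly one. The second point is immediate from the one-dimensionality of the reduced CR datum $\cO^{j}_\ell/\cO^{j-1}_\ell$. The first is the statement that the rational normal curve of degree $k$ osculates fully: the relevant intertwiners are nonzero multiples of symmetrisation/contraction with coefficients built from the nonvanishing binomials $\binom{k}{j}$, $0\le j\le k$. I would verify this either by the $\gsl_2(\R)$-weight calculus on $S^k\R^2$ or by generalising the explicit bracket $[\bar Z,Z]$ of Theorem~\ref{thm:29}, which exhibits the lowering operator directly; making the tube dictionary (that $\clL_{p+1}$ is exactly this equivariant contraction) precise is the part requiring care.
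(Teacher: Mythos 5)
Your geometric picture is the right one---the Freeman flag at a point $o=\ell^2 g$ of $\Sigma$ is the complexified osculating flag $\ell^{k-j}S^{j}\R^2$ of the rational normal curve, and the tube structure reduces all higher Levi brackets to flat affine derivatives of the tangent frame---and this is exactly the answer the paper obtains. But the mechanism you propose has two genuine gaps. First, $\cM^{2k+1}$ is \emph{not} homogeneous under $G=\GL_2(\R)\ltimes S^k\R^2$ once $k\geq 5$: $\dim G=k+5<2k+1=\dim\cM^{2k+1}$ (the paper notes the model is only almost simply transitive for $k=4$ and inhomogeneous for $k\geq5$). So neither the constancy of the Freeman ranks nor the reduction to a single base point can be extracted from homogeneity. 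Second, the stabilizer of $o=\ell^2 g$ in $G$ is not the Borel subgroup fixing $[\ell]$: it is the subgroup of $\GL_2(\R)$ fixing the binary form $\ell^2 g$ itself, which is a one-dimensional torus for $k=3$ (cf.\ \eqref{HH2}) and generically finite for $k\geq4$ (cf.\ the discrete stabilizer in Example \ref{ex:4nondeg}). With so little isotropy, ``equivariant, hence up to scale the natural contraction'' is not a valid Schur-type argument; and even where a torus with distinct weights is present, $T_o\Sigma$ has many invariant subspaces (all sums of weight lines) and many invariant complete flags, so properness plus invariance of the left kernel does not single out the osculating subspace one dimension down.

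The consequence is that the explicit computation you defer to your last paragraph as a mere ``nonvanishing'' check is in fact the entire proof, and it is what the paper does: in the parametrization $\psi(\lambda,t_0,\dots,t_{k-2})=\sum_s t_s\gamma^{(s)}(\lambda)$ one has $T_{\psi(\tau)}\Sigma=\langle\gamma,\dots,\gamma^{(k-1)}\rangle$, with $\nabla_{\psi_*\partial_{t_s}}\gamma^{(r)}=0$ and $\nabla_{\psi_*\partial_\lambda}\gamma^{(r)}=\gamma^{(r+1)}$ for the flat connection $\nabla$ on $\R^{k+1}$; since for a tube the bracket of frame fields $Z_V=\sum v_j(x)\partial_{z_j}$ is governed by $\nabla_V W$, this simultaneously identifies the left kernel at each step as $\langle Z_0,\dots,Z_{k-s-2}\rangle$, shows the drop is exactly one, and does so at every point of the chart (the $\GL_2(\R)$-translates of the chart cover $\Sigma$, which replaces the homogeneity you invoke). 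Substituting this frame computation for the equivariance step would close the argument.
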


 \begin{proof}
We use local coordinates where $\g:\l\mapsto(1,\lambda,\dots,\lambda^k)$ is a curve in $\R^{k+1}_x$ so that the rational normal cone $R$ 
is parametrized as $(\l,t_0)\mapsto t_0\g(\l)$. Then the tangent variety $T^qR$ is parametrized as
 $$
\psi:\tau=(\l,t_0,t_1,\dots,t_q)\mapsto t_0\g(\l)+t_1\g'(\l)+\dots+t_q\g^{(q)}(\l)\;,
 $$
which we will consider for $q=k-2$. This parametrization covers only a proper open dense subset of 
$T^{k-2}R$, but this is sufficient due to $GL_2(\R)$-equivariancy.

Note that at nonsingular points $\Sigma=T^{k-2}R\setminus T^{k-3}R$ we have
 $$
T_{\psi(\tau)}\Sigma=\langle\gamma,\gamma',\dots,\gamma^{(k-1)}\rangle.
 $$
The first $(k-1)$ terms correspond to $\psi_*\partial_{t_0},\dots,\psi_*\partial_{t_{k-2}}$, whereas
the last term is a combination of them and $\psi_*\partial_\l$. 
Note that $\gamma$ is a radial vector field for $R$ parallelly translated along $T^{k-2}R$.
The other generators are also $t_s$-independent, i.e., they are constant along the foliation
of $\Sigma$ by $(k-1)$-planes.
In particular, for the standard affine connection $\nabla$ on $\R^{k+1}$ we have
$$\nabla_{\psi_*\partial_{t_s}}\g^{(r)}=0\;\;\text{while}\;\;
\nabla_{\psi_*\partial_\l}\g^{(r)}=\g^{(r+1)}\;,$$
for all $0\leq s\leq k-2$. 
The CR-distribution $\cD$ is generated by the vectors $X\in T\Sigma\subset\R^{k+1}_x$ and their counterparts $JX\in\R^{k+1}_y$, which respectively have the form 
$$X=\sum a_i^j(x)\partial_{x_j}\;\;\text{and}\;\;JX=\sum a_i^j(x)\partial_{y_j}\;.$$ 
Therefore the commutator of such vector fields $X$ and $Y$ is 
$\nabla_XY\in\R^{k+1}_y$ for the above trivial affine connection $\nabla$.
This allows to easily compute the terms of the Freeman filtration. 

Let us denote 
$$Z_s=\frac12(\g^{(s)}_x-i\g^{(s)}_y)$$
for all $0\leq s\leq k-1$, where the subscripts $x, y$ indicate to which of the two components in $\mathbb C^{k+1}_z=\R^{k+1}_x\times\R^{k+1}_y$ the vector belongs. Then
 $$
\cF^s_{10}=\langle Z_0,Z_1,\dots,Z_{k-s-2}\rangle\;,
 $$
so that $\cF^{k-2}_{10}\neq0$ and $\cF^{k-1}_{10}=0$.
 \end{proof}
 
Furthermore the affine automorphism group of $\cM^{2k+1}$ is clearly 
$G=GL_2(\mathbb R)\ltimes S^k\mathbb R^2$
and we expect that this is equal to the entire automorphism group, namely
 \begin{equation}\label{conj}
\operatorname{Aut}\big(\cM^{2k+1},\cD,\cJ\big)=GL_2(\R)\ltimes S^k\mathbb R^2.
 \end{equation}
It is important here that $k>2$. In fact, for $k=2$, the rational normal cone is a quadric, 
the null cone for a Lorentzian $3$-dimensional metric, and this results in a bigger
symmetry algebra $\mathfrak{so}(2,3)$ acting on the tube over the future light cone, see \cite{IZ,MS}.
Note that the automorphism group of $\big(\cM^5,\cD,\cJ\big)$ is $GL_2(\R)\ltimes S^2\mathbb R^2$,
but $\cM^5$ is densely embedded in the homogeneous model in a complex projective quadric
with automorphism group $SO(2,3)^o$.

In fact, for $k=2$, the rational normal cone is a quadric, 
the null cone for a Lorentzian $3$-dimensional metric, and this results in a bigger
automorphism group, the conformal group $SO(2,3)$ acting on the tube over the future light cone, see \cite{IZ,MS}. 
If the above conjecture \eqref{conj} is true then  the model will be almost simply transitive 
for $k=4$ and inhomogeneous for $k\geq 5$. 
(We remark that for $k=4$ our model here is locally equivalent to that of Example \ref{ex:4nondeg}. 
Indeed, identifying points in $\R^5=S^4\R^2$ with the coefficients of a quartic, one can show that 
the second tangent of the rational normal curve passes through the point $(1,1,1,0,0)$, 
which lies on the same $GL_2(\R)$-orbit as $(0,0,1,1,1)$.) 
For a construction of homogeneous $k$-nondegenerate 
CR manifolds in dimension $2k+3$, we refer to \cite{La,MMN}.

Relations to other geometries, like higher codimension CR tubes and higher order ODEs also generalize.
In particular, $\gg=\ggl_2(\R)\ltimes S^k\mathbb R^2$ is the
symmetry algebra of the trivial ODE $y^{(k+1)}(x)=0$.
Again the case $k=2$ is special: the symmetry algebra is $\mathfrak{sp}_4(\R)=\mathfrak{so}(2,3)$.
Thus we have an affine bundle $\cM^{2k+1}\to\mathcal{E}^{k+2}$ of rank $(k-1)$ over the equation
manifold of the trivial ODE for every $k\ge2$, and the action of $\gg$ is projectable.
\bigskip\par

\end{document}